\documentclass[a4paper,11pt, DIV12]{amsart}
\usepackage[english]{babel}
\usepackage[T1]{fontenc}
\usepackage[ansinew]{inputenc}
\usepackage{amsmath}
\usepackage{amsfonts}
\usepackage{ mathrsfs }
\usepackage{amssymb}
\usepackage{textcomp}
\usepackage[hidelinks]{hyperref}
\usepackage[arrow, matrix, curve]{xy}

\parindent=0em												
\parskip=0.5em												

\usepackage[a4paper, total={6in, 10in}]{geometry}

\geometry{a4paper, top=21mm, left=25mm, right=25mm, bottom=30mm, headsep=8mm, footskip=12mm}

\usepackage{amsthm}										

\usepackage{enumerate}               

\usepackage{yhmath}



\usepackage{tikz}
\usepackage{tikz-cd}
\usetikzlibrary{matrix,arrows,decorations.pathmorphing}
\usepackage{cleveref}									


\newif\ifproofinsert
\proofinserttrue																			
\newsavebox\myproofbox
\ifproofinsert
\else
\renewenvironment{proof}{%
\begin{lrbox}{\myproofbox}\begin{minipage}{\linewidth}%
}{\end{minipage}\end{lrbox}}
\fi

\newtheorem{introthm}{Theorem}

\theoremstyle{definition}
	\newtheorem{defn}{Definition}[section]
	\newtheorem{bem}[defn]{Remark}
	\newtheorem{bsp}[defn]{Example}
	\newtheorem{const}[defn]{Construction}
\theoremstyle{plain}
	\newtheorem{satz}[defn]{Theorem}
	\newtheorem{kor}[defn]{Corollary}
	\newtheorem{lem}[defn]{Lemma}
	\newtheorem{prop}[defn]{Proposition}

\theoremstyle{plain}
  \newtheorem{Thm}[defn]{Theorem}
  \newtheorem{Lem}[defn]{Lemma}
  
  \newtheorem{Prop}[defn]{Proposition}

\theoremstyle{definition}
  \newtheorem{Def}[defn]{Definition}
	\newtheorem{D&P}[defn]{Definition \& Proposition}
  
	\newtheorem{Rem}[defn]{Remark}

\newcommand{\Z}{\mathbb{Z}}
\newcommand{\TT}{\mathbb{T}}

\newcommand{\R}{\mathbb{R}}

\newcommand{\T}{\mathbb{T}}

\newcommand{\Linear}{\mathbb{L}}

\newcommand{\vedge}{\land}
\newcommand{\del}{\partial}

\newcommand{\inj}{\hookrightarrow}

	\DeclareMathOperator{\PD}{PD}

	\DeclareMathOperator{\cone}{cone}

	\DeclareMathOperator{\trop}{trop}

	\DeclareMathOperator{\Hom}{Hom}

	\DeclareMathOperator{\sed}{sed}
	
	\DeclareMathOperator{\sing}{sing}
	\DeclareMathOperator{\supp}{supp}

	\DeclareMathOperator{\id}{id}

\newcommand{\inte}{\text{int}}

\DeclareMathOperator{\AS}{\mathcal{A}}

\DeclareMathOperator{\CS}{\mathcal{C}}
\DeclareMathOperator{\CC}{\mathcal{C}}

\DeclareMathOperator{\DS}{\mathcal{D}}

\DeclareMathOperator{\FS}{\mathcal{F}}

\DeclareMathOperator{\LS}{\mathcal{L}}

\DeclareMathOperator{\US}{\mathcal{U}}

\DeclareMathOperator{\RM}{\mathbb{R}}

\DeclareMathOperator{\TM}{\mathbb{T}}

\usepackage{amsmath}
\usepackage{amssymb}
\usepackage{hyperref}
\usepackage{babel}
\usepackage{pgf,tikz,pgfplots}
\usepackage{graphicx}

\usetikzlibrary{calc}
\usepackage{amsfonts}
\input xy
\xyoption{all}

\newtheorem*{ack}{Acknowledgements}

\tikzset{%
  add/.style args={#1 and #2}{to path={%
 ($(\tikztostart)!-#1!(\tikztotarget)$)--($(\tikztotarget)!-#2!(\tikztostart)$)%
  \tikztonodes}}
}

\title{Superforms, Tropical Cohomology, and Poincar\'e Duality}
\author{Philipp Jell, Kristin Shaw, Jascha Smacka}

\thanks{The second author's research  is supported by a postdoctoral research fellowship from the Alexander von Humboldt Foundation. 
The first and third author are respectively were partially supported by the collaborative research centre SFB 1085 "Higher Invariants" by the Deutsche Forschungsgemeinschaft.}

\address{Philipp Jell, Universit\"at Regensburg, Universit\"atsstra\ss e 31, 93053 Regensburg, Germany}
\email{philipp.jell@ur.de}
\address{Kristin Shaw, 
Technische Universit\"at
Berlin, MA 6-2, 10623 Berlin, Germany.}
\email{shaw@math.tu-berlin.de} 
\address{Jascha Smacka, Universit\"at Regensburg, Universit\"atsstra\ss e 31, 93053 Regensburg, Germany}
\email{jascha.smacka@ur.de}

\begin{document}

\maketitle 
\begin{abstract}
We establish a canonical isomorphism between two bigraded 
cohomology theories for polyhedral spaces:
Dolbeault cohomology of superforms
and  tropical 
cohomology.  
Furthermore, we prove Poincar\'e duality for cohomology of tropical manifolds, which are polyhedral spaces locally given by Bergman fans of matroids.  
\end{abstract}
\begin{center}
\today
\end{center}

\tableofcontents

\section{Introduction}

 Superforms  on $\R^r$  are bigraded real-valued differential forms introduced by Lagerberg \cite{Lagerberg}. They have differential operators $d'$, $d''$, and $d$ analogous to the differential operators $\del$, $\overline{\del}$, and $d$ on complex differential forms. 
 Recently, superforms restricted to tropicalizations  were  used by Chambert-Loir and Ducros to construct real-valued
differential forms on analytic spaces in the sense of Berkovich \cite{CLD}. Superforms have  also been used to provide a non-Archimedian analytic  description
of heights by Gubler and K\"unnemann  \cite{GublerKuennemann}.

A Poincar\'e lemma
 with respect to the differential operators $d'$ and $d''$ for superforms on polyhedral complexes in $\R^r$ and 
 Berkovich spaces  was proven by the first author \cite{Jell}. Here we consider the cohomology with respect to the operator $d''$. We call this the Dolbeault cohomology of superforms since the operator $d''$ behaves analogously to  the operator $\overline{\del}$ for complex 
differential forms. 

 Tropical 
 cohomology as introduced by Itenberg, Katzarkov, Mikhalkin, and Zharkov \cite{IKMZ}, is the cohomology of singular cochains 
 of a polyhedral complex 
  with non-constant coefficients. The coefficient systems are determined by the 
  geometry 
  of the complex  (see
Definition \ref{def:multitangent}).
Via the tropicalization  procedure, this cohomology theory can sometimes be related to the Hodge theory of projective varieties. For example, under suitable conditions on the tropicalization of a family of non-singular complex projective varieties, the dimensions of the tropical 
cohomology groups are equal to the Hodge numbers of a generic member of the family \cite[Corollary 2]{IKMZ}.

Our first goal is to prove that Dolbeault cohomology of superforms and tropical cohomology of a polyhedral space are canonically isomorphic. 
Before doing so,
we
extend the theory of superforms to polyhedral complexes contained in a partial compactification  $\T^r := [-\infty, \infty)^r$ of $\R^r$ which arises in tropical geometry. 
Superforms on 
 $\T^r$ require 
compatibility conditions along the strata of this partial compactification of $\RM^r$. 
We then extend the definition of superforms to polyhedral spaces, which  are topological spaces that are equipped with an atlas of charts to polyhedral complexes in $\T^r$ (see Definition \ref{def:polyhedralspace}).
For a polyhedral space $X$, we obtain complexes of sheaves of superforms $(\AS^{p,\bullet}_X, d'')$ by
 gluing spaces of superforms            										
   on open subsets.

   In Subsection \ref{sec:tropicalcohomology}, we recall the definition of the tropical cohomology groups $H^{p,q}_{\trop}(X)$ 
   of a  polyhedral space equipped with a face structure (see Definition \ref{def:facestructure}). 
   We also define the tropical cohomology groups with compact support $H^{p,q}_{\trop,c}(X)$. 
The Dolbeault cohomology of superforms is the cohomology of the complexes of global sections $(\mathcal{A}^{p, \bullet}_X(X), d'')$. We denote these groups by $   H^{p,q}_{d''}(X) = H^q((\mathcal{A}^{p, \bullet}_X(X), d''))$.
We also write $H^{p,q}_{d'',c}(X)$ for the cohomology of global sections with compact support (see Definition \ref{Def:DolbeautCohomology}).

The first theorem relates the Dolbeaut cohomology of superforms and tropical cohomology. 

\begin{introthm} \label{introthm1}
Let $X$ be a  polyhedral space 
 equipped with a face structure. Then there are canonical isomorphisms 
$$H^{p, q}_{\trop}(X) 
\cong H^{p,q}_{d''}(X) \qquad \text{and} \qquad 
H^{p, q}_{\trop,c}(X) 
\cong H^{p,q}_{d'',c}(X).$$
\end{introthm}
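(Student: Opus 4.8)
The plan is to realize both cohomology theories as the sheaf cohomology of one sheaf on $X$ and to compare them through an explicit integration map; the two statements (with and without supports) are handled in parallel.

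Fix $p$ and consider on $X$ the complex of sheaves $(\AS^{p,\bullet}_X, d'')$ together with the kernel sheaf $\mathcal{L}^p_X := \ker\big(d''\colon \AS^{p,0}_X \to \AS^{p,1}_X\big)$. The first, purely local, step is to prove that $(\AS^{p,\bullet}_X, d'')$ is a resolution of $\mathcal{L}^p_X$ and that $\mathcal{L}^p_X$ is canonically the coefficient sheaf $\mathcal{F}^p_X$ of tropical cohomology. Exactness in positive degrees is the Poincar\'e lemma for $d''$ of \cite{Jell}, which the previous sections extend to polyhedral complexes in $\T^r$ and hence to polyhedral spaces. For the identification of the kernel, note that a $d''$-closed $(p,0)$-superform has locally constant coefficients, so near a point $x$ it is, on every cell $\sigma$ through $x$, a constant element of $\Lambda^p\mathbb{L}_\sigma^{\vee}$; the compatibility conditions along faces imposed in the definition of superforms (including the new conditions along the boundary strata of $\T^r$) then translate precisely into the conditions cutting out the multitangent space $\mathcal{F}^p_x$ of Definition~\ref{def:multitangent}, and this identification is clearly natural.

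Next, each $\AS^{p,q}_X$ is a module over the sheaf of smooth functions on $X$, which admits partitions of unity subordinate to any open cover, so the sheaves $\AS^{p,q}_X$ are fine; since a polyhedral space is locally compact and paracompact, fine sheaves are acyclic both for $\Gamma$ and for $\Gamma_c$. Hence the resolution $0 \to \mathcal{F}^p_X \to \AS^{p,\bullet}_X$ computes $H^q(X, \mathcal{F}^p_X)$ and $H^q_c(X, \mathcal{F}^p_X)$, giving $H^{p,q}_{d''}(X)\cong H^q(X,\mathcal{F}^p_X)$ and $H^{p,q}_{d'',c}(X)\cong H^q_c(X,\mathcal{F}^p_X)$. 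On the other hand, relative to a face structure the sheaf $\mathcal{F}^p_X$ is constructible, and the tropical cochain complex computing $H^{p,\bullet}_{\trop}(X)$, respectively $H^{p,\bullet}_{\trop,c}(X)$, is the cellular cochain complex of $\mathcal{F}^p_X$, respectively its compactly supported variant; the standard comparison of cellular and sheaf cohomology for constructible sheaves on polyhedral spaces then yields $H^{p,q}_{\trop}(X)\cong H^q(X,\mathcal{F}^p_X)$ and $H^{p,q}_{\trop,c}(X)\cong H^q_c(X,\mathcal{F}^p_X)$.

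To get the isomorphisms in the canonical, manifestly compatible form of the theorem, I would produce the comparison map directly: to $\alpha \in \AS^{p,q}_X(U)$ associate the cellular $p$-cochain $\tau\mapsto \int_\tau\alpha$, where for a $q$-dimensional cell $\tau$ the value $\int_\tau\alpha\in\mathcal{F}^p(\tau)$ is obtained by integrating the $q$ ``double'' coordinate directions of $\alpha$ along $\tau$ and keeping the remaining $p$ ``prime'' directions as a covector. Stokes' theorem for superforms shows this is a morphism of complexes $\Gamma(X,\AS^{p,\bullet}_X)\to C^{p,\bullet}_{\trop}(X)$, and the same construction works with compact supports; since it restricts to the identity on $\mathcal{F}^p_X$ in degree zero it refines to a morphism of the two sheaf resolutions above and is therefore a quasi-isomorphism, which is the assertion of the theorem.

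The step I expect to be the main obstacle is the local identification $\mathcal{L}^p_X\cong\mathcal{F}^p_X$, together with the Poincar\'e lemma and the well-definedness of the integration map, at points of the boundary strata of the partial compactification $\T^r$: here one must match the additional compatibility conditions that define superforms near such strata with the corresponding drop of rank of the multitangent spaces, which is precisely what made extending the theory of superforms to $\T^r$ delicate. A further technical point is the compactly supported case, where one has to check that $\Gamma_c$ of the fine resolution and the cellular cochain complex with compact supports both compute $H^q_c(X,\mathcal{F}^p_X)$; this uses the local compactness and local finiteness built into the notion of a polyhedral space equipped with a face structure.
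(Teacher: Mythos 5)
Your overall architecture coincides with the paper's: resolve $\LS^p_X=\ker(d''\colon\AS^{p,0}_X\to\AS^{p,1}_X)$ by the fine complex $\AS^{p,\bullet}_X$ via the Poincar\'e lemma, identify $\LS^p_X\cong\FS^p_X$ by computing both sheaves on a basis of the topology, and realize tropical cohomology (with and without supports) as sheaf cohomology of $\FS^p_X$; this is exactly Corollary \ref{kor:PLemmaX}, Lemma \ref{lem:isosheaves} (via Proposition \ref{lem:Lp} and Lemma \ref{lem:Fp}), and Proposition \ref{Prop:tropsheaf}. Two caveats. First, the tropical cochains of Definition \ref{def:trophomol} are \emph{singular}, not cellular, so the comparison with $H^q(X,\FS^p)$ is not the off-the-shelf cellular-versus-sheaf statement you invoke: the paper instead sheafifies the singular cochain complexes, proves them flasque, and shows local acyclicity on basic open sets (Proposition \ref{prop:acyclic}), which requires a deformation retraction onto the minimal stratum that respects both the polyhedral structure and the sedentarity (Lemma \ref{lem:retractionSed}); some substitute for this local computation is unavoidable in your route as well. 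Second, the integration map in your third paragraph is neither needed for canonicity (the isomorphism is already canonical because both complexes resolve the same sheaf, compatibly with restriction on basic opens) nor available at the stated level of generality: integration of superforms in this paper is only defined for \emph{rational weighted} polyhedral structures (it uses the lattices $\Z(\sigma)$ to normalize volumes), whereas Theorem \ref{introthm1} concerns arbitrary polyhedral spaces, and the singular simplices are merely continuous, so $\int_\tau\alpha$ is not defined as written. The paper explicitly flags these obstructions in the remark following Proposition \ref{Prop:functorial} and deliberately avoids the de Rham--type map; if you drop that paragraph, your argument matches the paper's proof.
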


 To prove Theorem \ref{introthm1}, we first show that for every $p$, the complex $\AS_X^{p,\bullet}$ is an acyclic resolution of certain sheaves 
denoted 
 $\LS^p_X$ on $X$.
Tropical cohomology was already shown to be equivalent to the cohomology of constructible  sheaves, denoted
$\FS^p_X$  \cite[Proposition 2.8]{MikZhar}. Comparing explicit descriptions of these sheaves on a basis of the topology, we show that $\LS^p_X$ and $\FS^p_X$ are isomorphic, which implies the above theorem. In fact, the sheaves $\FS^p_X$ are defined for a polyhedral space $X$ even in the absence of a face structure. This relates the Dolbeault cohomology of superforms with the cohomology of the sheaves $\mathcal{F}^p_X$ for general polyhedral spaces (see Remark \ref{rem:Fpcohomology}).  
 
Secondly, we prove a version of Poincar\'e duality for tropical manifolds.  
For $X$ an $n$-dimensional
tropical space 
 (see Definition \ref{def:tropicalVariety}),  there is a map   $$\PD \colon H^{p,q}_{d''}(X) \rightarrow H^{n-p,n-q}_{d'',c}(X)^*,$$
 which we call the Poincar\'e duality map. This map is induced by integration of superforms (see Definition \ref{def:PoincareMap}), and thus is similar to the integration pairing on 
 the cohomology of a complex manifold. 
The fact that 
the Poincar\'e duality map on spaces of 
 superforms descends to cohomology when $X$ is a
tropical space  follows from an analogue  of Stokes' theorem  (see Theorem \ref{thm:StokesTropical}).

 Tropical manifolds are 
 tropical spaces
 with the extra condition that they are locally modeled on matroidal tropical cycles \cite{MikRau, Shaw:Thesis}. 
 A matroidal tropical cycle is supported on the Bergman fan 
of a  matroid and equipped with weight one. 
Some matroidal cycles  arise as tropicalizations of linear spaces, however they are much more general and may even have no algebraic counterpart \cite{Sturmfels:Poly}.
Despite perhaps being far from smooth objects in the  algebraic or differentiable sense, tropical manifolds exhibit many properties analogous to smooth spaces \cite{Shaw:Thesis}.
 Establishing Poincar\'e duality for the tropical cohomology of these spaces
provides another instance of this phenomenon. 

\begin{introthm}\label{introthm:Poincare}
If  $X$ is an $n$-dimensional tropical manifold then 
the Poincar\'e duality map 
is an isomorphism for all $p$ and $q$. 
\end{introthm}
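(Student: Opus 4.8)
The plan is to reduce the global statement to a local one via a sheaf-theoretic / Mayer--Vietoris argument, and then verify the local case on the building blocks, namely open subsets of Bergman fans of matroids. By Theorem \ref{introthm1} we may work either with Dolbeault cohomology of superforms or with tropical cohomology; I will use whichever description is more convenient at each step. The Poincaré duality map $\PD$ is defined via the integration pairing $(\alpha,\beta)\mapsto \int_X \alpha\wedge\beta$ on superforms, and the fact that it descends to cohomology is exactly the content of the tropical Stokes' theorem (Theorem \ref{thm:StokesTropical}); so the only issue is bijectivity.

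First I would set up the formalism so that $\PD$ becomes a morphism of (co)homological functors. Concretely, for an open cover $X = U \cup V$ one has a Mayer--Vietoris long exact sequence for $H^{p,\bullet}_{d''}$ and, dually, one for $H^{n-p,\bullet}_{d'',c}$ (the latter coming from the short exact sequence of complexes of compactly supported sections associated to $U\cap V \hookrightarrow U\sqcup V \to X$). The integration pairing is compatible with restriction and extension-by-zero, so $\PD$ intertwines these two long exact sequences up to the usual sign; this is a routine diagram check using Stokes once more to handle the connecting maps. By the five lemma, if $\PD$ is an isomorphism for $U$, $V$, and $U\cap V$, then it is an isomorphism for $X$. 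Hence, using that a tropical manifold has a basis of opens each of which is (isomorphic to an open in) a Bergman fan, and that such a basis is stable under finite intersection, a standard induction on the size of a finite good cover — together with a colimit argument to pass to arbitrary opens, using that both $H_{d''}$ commutes with the relevant colimits and $H_{d'',c}$ turns them into limits — reduces everything to: $\PD$ is an isomorphism for $X$ an open star-shaped (e.g. the whole fan, or a "cone neighbourhood") subset of the Bergman fan $B(M)$ of a matroid $M$.

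For this local computation I would proceed by induction on the rank of the matroid, or equivalently on the dimension $n$. The key inputs are: (i) the Poincaré lemma for $d''$ proved by the first author \cite{Jell}, which computes $H^{p,q}_{d''}$ of a star-shaped polyhedral set and shows it is concentrated in degree $q=0$ where it equals the combinatorially defined group $\LS^p$; (ii) a dual "compactly supported Poincaré lemma" computing $H^{p,q}_{d'',c}$ of such a set, which by the superform Stokes theorem will be concentrated in the top degree $q=n$; and (iii) the matroidal structure: the Bergman fan $B(M)$ is a cone over the tropical linear space, and its open stars are products of $\R$ with Bergman fans of matroid minors (contractions/deletions). Using the product/cone structure one can feed the inductive hypothesis through a Künneth-type argument for the integration pairing (integration over a product is the product of integrations), and the base case — a single point, or $\T^n$ / $\R^n$ — is a direct verification. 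The pairing between $\LS^p$ in degree $0$ and the compactly supported $H^{n-p,n}_{d'',c}$ in degree $n$ must be checked to be perfect; this is where the matroid combinatorics really enters, via the fact that $\LS^p$ and its top-degree compactly supported dual are the pieces of the Orlik--Solomon / matroid cohomology algebra, which is known to satisfy Poincaré duality (it is the cohomology of a wonderful compactification, or one may use the self-duality of the lattice of flats encoded in the fan).

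The main obstacle I anticipate is step (iii), the local perfectness of the pairing on a Bergman fan: one must show that integration of a closed superform of bidegree $(p,0)$ against a compactly supported closed superform of bidegree $(n-p,n)$ induces a perfect pairing between $\LS^p_x$ and the dual space, and that no lower/higher cohomology obstructs this. Handling the partial compactification directions (the $-\infty$ strata of $\T^r$, where the boundary compatibility conditions on superforms are in force) inside the Bergman fan is the delicate point, since the compactly supported Poincaré lemma must be proved in a way that respects these boundary conditions; I would prove it by an explicit integration-against-a-bump-function homotopy, adapted from \cite{Jell}, and then match the resulting top cohomology group with the combinatorial dual of $\LS^p$ using the description of $\LS^p \cong \FS^p$ from the proof of Theorem \ref{introthm1}.
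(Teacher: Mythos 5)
Your global reduction is workable in outline, though the paper takes a slicker route: instead of Mayer--Vietoris plus a good-cover/colimit induction, it observes that $U\mapsto \AS^{n-p,n-\bullet}_c(U)^*$ is a complex of flasque sheaves and that integration makes $\PD$ a morphism of complexes of sheaves from the fine resolution $\AS^{p,\bullet}$ of $\LS^p$ to this dual complex; exactness of both rows over basic open subsets (the first by the Poincar\'e lemma, the second by the local duality statement) exhibits both as acyclic resolutions of $\LS^p$, and the global isomorphism follows immediately, with none of the finite-good-cover or direct-limit bookkeeping your version would require.

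The genuine gap is in your local step (iii). A Bergman fan is not a product of $\RM$ with Bergman fans of minors (only its stars at nonzero points have such a product structure, which the paper uses merely to reduce basic opens to neighbourhoods of the origin of $V\times\TM^{|I|}$); the recursion that actually drives the induction is not a K\"unneth decomposition but a \emph{tropical modification} $\delta\colon V\to W$ (Proposition \ref{prop:matroidalProj}), a projection with fibers that are points or half-lines, realizing deletion ($W$) and contraction (the divisor $D$) of the underlying matroid. Moreover, the Orlik--Solomon algebra is the cohomology of an arrangement \emph{complement} and is not a Poincar\'e duality algebra; the ring with duality is the Chow ring of the wonderful compactification, a different object, so the perfectness of the pairing between $H^{p,0}(V)\cong {\bf F}^p(0)$ and $H^{n-p,n}_c(V)$ cannot be quoted from known matroid theory. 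What the paper does instead is: (a) prove the vanishing $H^{p,q}_c(V)=0$ for $q\neq n$ by induction along modifications, using the long exact sequence of the pair coming from the divisor stratum; (b) identify the sequence $0\to H^{p,0}(W)\to H^{p,0}(V)\to H^{p-1,0}(D)\to 0$ with the deletion--contraction sequence of Orlik--Solomon algebras, the last map being contraction of forms by $e_i$; and (c) verify by an explicit Stokes'-theorem computation (cutting $V$ along the graph of $P-1$) that $\PD$ intertwines this sequence with the compactly supported one, so the five lemma closes the induction. Step (c) is the real content of the local case and is exactly what your sketch leaves unproved.
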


As   in the proof of 
Poincar\'e duality for smooth manifolds, the statement is first established for the local models, 
which in our case are matroidal cycles. This is done in Propositions \ref{Prop:PDforMatroidalRr} and \ref{Prop:PDforMatroidaltimesT}.  
The main ingredient in the proof of the local case is a recursive description of matroidal cycles using tropical modifications 
(see Definition \ref{def:modificationsMat}). We restrict to tropical modifications of matroidal cycles which are  induced by deletion and contraction operations on the underlying matroids 
 \cite{Shaw:IntMat}. 
Poincar\'e duality for general tropical manifolds is then established from the local situation via standard methods.

In recent work, Adiprasito, Huh and, Katz consider an intersection ring associated to a matroid \cite{AdiprasitoHuhKatz}.  For a matroid  $M$,  the  graded ring $A^{*}(M)$   is shown to satisfy many striking properties in line with the cohomology rings of compact K\"ahler manifolds, such as Poincar\'e duality,  the Hard Lefschetz theorem, and an analogue of the  Hodge-Riemann bilinear relations. 
We expect that this ring is related to the cohomology groups presented here in the following way: For a matroid $M$ and $V$ its associated matroidal cycle,   there is a  suitable compactification $\overline{V}$ of $V$ 
for which $H^{k, k}(\overline{V}) \cong 
A^k(M) \otimes \mathbb{R}.$
Moreover, the product structures on  the tropical cohomology of $\overline{V}$ and $A^*(M)$ should also be isomorphic.

In addition to  the Poincar\'e duality relation established here, there is a lot of  interest in  other properties of the tropical cohomology groups. For instance, there are analogues of Lefschetz hyperplane section  theorems for tropical cohomology  \cite{AdiprasitoBjorner}.  
It was already shown that the tropical homology of tropical manifolds does not in general satisfy a direct translation of the Hodge-Riemann bilinear relations \cite{Shaw:11homol}. Furthermore, an  interesting open question is to establish the appropriate condition on a tropical manifold  $X$ so that $H^{p, q}(X) \cong H^{q,p}(X)$ \cite[Section 5]{MikZhar}.

It is also worthwhile to mention that tropical varieties can be used to construct currents on smooth complex projective varieties. 
This was recently used to  construct  a counter-example to the strongly positive Hodge conjecture \cite{BabaeeHuh}.  
Although their construction does not use the theory of superforms, it points to the power of connections between tropical geometry  and complex differential forms.

We now outline the presentation of this paper. 
Section \ref{chapter2} reviews  
 superforms on $\R^r$ and extends their definition to superforms on $\T^r$. 
For an open subset of the support of a polyhedral complex in $\T^r$ we define the space of $(p,q)$-superforms and show that this produces a sheaf. This construction  is also extended to produce sheaves of superforms on  polyhedral spaces. 
Section \ref{chapter3} recalls the definitions of tropical cohomology and calculates the cohomology of basic open sets (see Definition \ref{defn:basicopen}).
It also establishes a Poincar\'e lemma 
for 
the complexes of 
superforms on a polyhedral space and furthermore computes the sections of $\LS^p_X$ over basic open sets.  Following this, we show that the Dolbeault cohomology of superforms and tropical cohomology are  isomorphic (see Theorem \ref{thm:equivalenceDeRhamPQ}). 
 Subsection \ref{sec:integration} introduces integration  and proves 
 Stokes' theorem mentioned above. Finally, Subsection \ref{subsec:PD}  is devoted to the proof of Poincar\'e duality for tropical manifolds (see Theorem \ref{Thm:PoincareDualityII}).

 \begin{ack}
The authors would like to thank Walter Gubler, Johann Haas and Klaus K\"unnemann for comments on a preliminary draft, and also Karim Adiprasito, Grigory Mikhalkin, Johannes Rau and Ilia Zharkov for fruitful discussions. 
We are also grateful to two anonymous referees for helping us to improve this paper.

Furthermore, the authors would like to thank the Graduierten Kolleg "GRK 1692" by the Deutsche Forschungsgemeinschaft for making possible the lecture series by the second author that inspired this collaboration.
\end{ack}

\section{Superforms} \label{chapter2}

\subsection{Superforms on polyhedral subspaces of tropical affine space} \label{sec:formsOnComplexesTr}
In this subsection we define bigraded sheaves of 
 superforms on polyhedral complexes in tropical affine space $\T^r$. 
   We start by recalling the definitions for open subsets of $\RM^r$ due to Lagerberg \cite{Lagerberg}. After that we extend these to open subsets of $\T^r$ and to open subsets of polyhedral complexes in $\TM^r$.

\begin{defn}
Let $U \subset \R^r$ be an open subset. Denote by $\AS^q(U)$ the space of differential forms
 of degree $q$ on $U$. The space of \textit{$(p, q)$-superforms} on $U$ is defined as
\begin{align*}
\AS^{p,q}(U) := \AS^p(U) \otimes_{C^{\infty}(U)} \AS^q(U) =  \bigwedge^p {\R^r}^* \otimes_\R \AS^q(U),
\end{align*}
where $\bigwedge^p$ denotes the $p$-th exterior power. 
\end{defn}

If we choose a basis $x_1,\dots,x_r$ of $\R^r$, following \cite{CLD} and \cite{Gubler}, we  formally write a superform $\alpha \in \AS^{p,q}(U)$ as 
\begin{align*}
\alpha = \sum \limits _{|K| = p, |L| = q} \alpha_{KL} d'x_K \vedge d''x_L
\end{align*}
where $K = \{ i_1 , \dots i_p\} $ and $L = \{j_1,\dots j_q\}$ are ordered subsets of $\{1,\dots,r\}$, the coefficients $\alpha_{KL} \in C^\infty(U)$ are smooth functions and 
\begin{align*}
d'x_K \vedge d''x_L := (dx_{i_1} \vedge \dots \vedge dx_{i_p}) \otimes_\R (dx_{j_1} \vedge \dots \vedge dx_{j_q}). 
\end{align*}

There is a differential operator
\begin{align*}
d'' \colon \AS^{p,q}(U) =  \bigwedge^p {\R^r}^* \otimes_\R \AS^q(U) \rightarrow \AS^{p,q+1}(U) =  \bigwedge^p {\R^r}^* \otimes_\R \AS^{q+1}(U),
\end{align*}
given by $(-1)^p \text{id} \otimes D$, where $D$ is the usual 
differential operator on forms. In coordinates we have
\begin{align*}
d'' \left( \sum \limits_{K,L} \alpha_{KL} d'x_K \vedge d''x_L \right) &= \sum \limits_{K,L} \sum \limits _{i = 1} ^{r} \frac {\del \alpha_{KL} } {\del x_i} d''x_i \vedge d'x_K \vedge d''x_L \\
&:= (-1)^p \sum \limits_{K,L} \sum \limits _{i = 1} ^{r} \frac {\del \alpha_{KL} } {\del x_i} d'x_K \vedge  d''x_i \vedge d''x_L.
\end{align*}

\begin{bem}There are also differential operators $d' := D \otimes \id$ and $d := d' + d''$,
 which are  not considered in this paper. It is easy to see that the theories for $d'$ and $d''$ are symmetric 
up to sign. We choose to  consider the operator $d''$, since it produces the same cohomology as  tropical  cohomology. The cohomology of the operator $d'$ is isomorphic to that of $d''$ up to switching the bigrading.
\end{bem}

There is also a wedge product of superforms 
\begin{align*}
\wedge \colon \AS^{p,q}(U) \times \AS^{p',q'}(U) &\rightarrow \AS^{p+p',q+q'}(U) \\
(\alpha, \beta) &\mapsto \alpha \vedge \beta,
\end{align*}
which is, up to  sign, induced by the usual wedge product. In coordinates the wedge product  is given by 
\begin{align*}
(\alpha_{KL} d'x_K \vedge d''x_L ) \vedge ( \beta_{K'L'} d'x_{K'} \vedge d''x_{L'}) := &\alpha_{KL} \beta_{K'L'} d'x_{K} \vedge d''x_L \vedge d'x_{K'} \vedge d''x_{L'} \\
:= &(-1)^{p'q}  \alpha_{KL} \beta_{K'L'} d'x_{K} \vedge d'x_{K'} \vedge d''x_{L} \vedge d''x_{L'}.
\end{align*}
If one of $\alpha, \beta$ has compact support then so does $\alpha \wedge \beta$.
 Note that we have the usual Leibniz formula 
\begin{align*}
d''(\alpha \vedge \beta) = d'' \alpha \vedge \beta + (-1)^{p+q} \alpha \vedge d'' \beta.
\end{align*}

Let $\TT = [-\infty , \infty)$ and equip it with the topology of a half open interval. Then $\TT^r$ is equipped with the product topology. 
We write
$[r] := \{1, \dots , r\}$. 

\begin{defn} \label{defn:Tr}
The \emph{sedentarity} of a point $x \in \T^r$ is the subset $\sed(x) \subset [r]$ consisting of coordinates of $x$ which are $-\infty$. 

The space $\TT^r$ is naturally stratified by the sedentarity of points. For  $I \subset [r]$ set 
$$\R^r_I  := \{ x \in \TT^r \ | \ x_i = -\infty \text{ if and only if } i \in I \}.$$

Clearly, 
$\R^r_I \cong \R^{r-|I|}$. 
As a convention throughout, for a subset 
$S \subset \TT^r$ 
we  denote $S_I := S \cap \R^r_I$.  

 Moreover, for $J \subset I$ there is a canonical projection 
$\pi_{IJ}\colon \R^{r}_{J} \rightarrow \R^{r}_{I}$. Coordinate-wise the map $\pi_{IJ}$ sends  $x_i$ to $-\infty$ if $i \in I$ and to $x_i$ otherwise. 
\end{defn}

\begin{defn}\label{def:forms}
Let $U \subset \TT^r$ be an open subset. 
A \textit{$(p,q)$-superform $\alpha$ on $U$} is given by a collection of superforms $(\alpha_I)_{I \subset [r]}$  such that, 
 \begin{enumerate}
\item    
$\alpha_I  \in \AS^{p,q}(U_I)$ for all $I$, 
\item 
for each point $x \in U \subset \T^r$ of sedentarity $I$, there exists a neighborhood $U_x$ of $x$ contained in $U$ such that for each $J \subset I$ the projection satisfies $\pi_{IJ}(U_{x,J}) = U_{x,I}$ and  $\pi^*_{IJ}( \alpha_I|_{U_{x,I}}) = \alpha_J|_{U_{x,J}}$. 
\end{enumerate}

We denote the space of $(p,q)$-superforms on an open subset  $U$ by $\AS^{p,q}(U)$.
Note that a superform in $\AS^{0,0}(U)$ defines a collection of smooth functions on the subsets  $U_I$ which give a  continuous function on $U$. 
Therefore, we sometimes refer to $(0,0)$-superforms as smooth
 functions.

Condition $ii)$ of Definition \ref{def:forms} will be referred to as the \textit{condition of compatibility} of superforms along strata.  
Let $U \subset \T^r$ be an open subset and $\alpha \in \AS^{p, q}(U)$. Suppose that the points in $U$ have a unique maximal sedentarity and denote this by   $I$. 
 If for each $J \subset I$ we have $\pi_{IJ}^* \alpha_I = \alpha_J$, then we say that \textit {$\alpha$ is determined by $\alpha_I$ on $U$}. Notice that the condition of compatibility along strata implies that each $x \in U$ has an open neighborhood $U_x$ such that $\alpha|_{U_x}$ is determined by $(\alpha|_{U_x})_{\sed(x)}$ on $U_x$.

If $U \subset \T^r$ is an open subset and $\alpha  = (\alpha_I)_I \in \mathcal{A}^{p, q}(U)$ is a superform,  define 
$d'' \alpha$ to be given  by the collection $(d'' \alpha_I)_I$.
Pullbacks along the projections $\pi_{IJ}$ 
 commute with $d''$,
therefore $d''\alpha$ is a superform in 
$\AS^{p, q+1}(U)$. 
If also  $\beta  = (\beta_I)_I \in \AS^{p',q'}(U)$, then we define the wedge product $\alpha \vedge \beta := (\alpha_I \vedge \beta_I)_I \in \AS^{p+p', q+q'}(U)$.
This is indeed a superform on $U$, since the pullbacks along the projections commute with the wedge product.
\end{defn}

\begin{bem}
Notice that there is a natural isomorphism $J\colon \AS^{p, q}(U)\to\AS^{q, p}(U)$, which, up to sign, maps $d'x_K\otimes d''x_L$ to $d'x_L\otimes d''x_K$ 
 for $U \subset \T^r$ \cite[Section (1.2.5)]{CLD}. This is clear when $U \subset \R^r$. 
 When $U$ contains points of non-empty sedentarity the map $J$ preserves the condition of compatibility on the boundary strata. 
This involution is still well-defined for the spaces of superforms on polyhedral subspaces and polyhedral spaces defined in Subsection \ref{sec:polyhedralspace}.  
In the theory of  tropical  cohomology, outlined in Subsection \ref{sec:tropicalcohomology}, such an involution does not exist on the chain level.
\end{bem}

 \begin{bsp} \label{ex:part1}
 Consider an open
neighborhood $U$  of $-\infty \in \T$. For a $(p, q)$-superform $\alpha\in \AS^{p,q}(U)$ with $\max(p, q) = 1$,
by the condition of compatibility of superforms along strata,
 there must exist a smaller neighborhood $U' \subset U$ of $- \infty$ such that $\alpha$ is  zero on $U'$. 
 
 Similarly, a $(0, 0)$-superform on $U$ must be a constant function in some neighborhood of $-\infty$.
 \end{bsp}

In the next lemma we use upper indexing  of open sets to avoid confusion with the notation for the sedentarity.

\begin{lem} \label{lem:partitionsofunity}
Let $U \subset \TT^r$ be an open subset and $(U^l)_{l \in L}$ an open cover of $U$. Then there exist a countable, locally finite cover  $(V^k)_{k \in K}$ of $U$, a collection of non-negative smooth functions $(f^k\colon V^k \rightarrow \R)_{k \in K}$ with compact support, and a map $s \colon K \rightarrow L$ such that $V^k \subset U^{s(k)}$ for every $k\in K$, and $\sum \limits_{k \in K} f^k \equiv 1$.
\end{lem}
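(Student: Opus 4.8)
The statement is a partition-of-unity result for open subsets of $\TM^r$, so the natural strategy is to reduce it to the classical partition of unity on the manifold $\RM^r$ (or on open subsets of Euclidean space), and then take care of the new phenomenon caused by the boundary strata $\RM^r_I$ with $I \neq \emptyset$. The key point is that near a boundary point a smooth function in the sense of Definition~\ref{def:forms} — in particular a $(0,0)$-superform — must be locally constant in the directions going to $-\infty$ (Example~\ref{ex:part1}), so we cannot simply restrict classical bump functions from $\RM^r$; we must build bump functions that are compatible along strata.

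Here is how I would organize it.

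First, I would recall that $\TM^r$ is second countable, locally compact, and Hausdorff, hence paracompact; so it suffices to produce, for each point $x\in U$, an index $l(x)\in L$ and a nonnegative smooth function $g^x$ (in the superform sense) supported in a compact subset of $U^{l(x)}$ with $g^x(x)>0$, such that the supports of the $g^x$ can be arranged to be locally finite. The usual shrinking-and-normalizing argument (extract a countable locally finite subcover $(V^k)_{k\in K}$ refining $(U^l)$, then set $f^k = g^k/\sum_j g^j$) then finishes the proof exactly as in the manifold case, provided the $g^k$ are smooth in our sense and have compact support. So the whole content is the construction of one such bump function $g^x$ at each point.

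Second, for the construction of $g^x$ at a point $x$ of sedentarity $I$: choose coordinates so that $x_i=-\infty$ precisely for $i\in I$. Take a small product neighborhood $U_x = \prod_{i} B_i \subset U$ of $x$ contained in some $U^{l}$, where for $i\in I$ the factor $B_i$ is a half-open interval $[-\infty, a_i)$ and for $i\notin I$ it is a bounded open interval around $x_i$. On the finitely many coordinates $i\notin I$ take a standard smooth bump function $\varphi_i$ compactly supported in $B_i$ with $\varphi_i(x_i)>0$; on the coordinates $i\in I$ we are \emph{not} allowed to let the function vary (by compatibility along strata, cf.\ Example~\ref{ex:part1}), so instead use a smooth cutoff $\psi_i\colon\TM\to[0,1]$ that is identically $1$ on $[-\infty, a_i')$ and $0$ on $[a_i'',\infty)$ for $-\infty<a_i'<a_i''<a_i$; note $\psi_i$ is a legitimate $(0,0)$-superform on $B_i$ because it is constant ($\equiv 1$) in a neighborhood of $-\infty$. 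Set $g^x := \prod_{i\notin I}\varphi_i \cdot \prod_{i\in I}\psi_i$, extended by $0$ outside $U_x$. One checks that $g^x$ is a $(0,0)$-superform on $U$: on each stratum $U_J$ it restricts to a product of smooth functions, and for $J\subseteq I'\coloneqq\sed$ of a point, $\pi_{I'J}^*$ of the restriction equals the restriction, since the only factors that survive passing to a deeper stratum are exactly the $\psi_i$ with $i$ in the sedentarity, which are $\equiv 1$ near those strata and hence pull back correctly. Its support is contained in $\prod_{i\notin I}(\mathrm{supp}\,\varphi_i)\times\prod_{i\in I}[-\infty,a_i'']$, which is compact in $\TM^r$ (each $[-\infty,a_i'']$ is compact) and contained in $U_x\subset U^{l}$. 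Also $g^x(x) = \prod\varphi_i(x_i)>0$.

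The main obstacle is precisely this compatibility-along-strata check for $g^x$ and the verification that it has compact support in the $\TM$-topology — the bump function cannot be chosen freely in the $-\infty$ directions, which is why one is forced into the product construction with cutoffs $\psi_i$ that are frozen to $1$ near the boundary. Once the single bump functions are in hand, the extraction of a countable locally finite refinement and the normalization $f^k = g^k/\sum_{j} g^j$ are routine: local finiteness makes the sum $\sum_j g^j$ a well-defined, strictly positive smooth function near every point of $U$, dividing preserves smoothness in the superform sense (quotients of superforms by nowhere-vanishing $(0,0)$-superforms are again superforms, as the compatibility conditions are multiplicative), and each $f^k$ inherits the compact support of $g^k$.
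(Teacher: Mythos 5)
Your proof is correct and takes essentially the same approach as the paper: the paper likewise reduces to the one-dimensional case $\TT=[-\infty,\infty)$, where the bump function near $-\infty$ is taken constant near the boundary, forms products over the coordinates to get compactly supported $(0,0)$-superform bump functions, and then invokes the standard shrinking-and-normalizing argument (citing Warner). Your write-up just makes explicit the compatibility-along-strata and compact-support checks that the paper leaves implicit.
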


 Such a family of functions is called a \emph{partition of unity} subordinate to the cover $(U^l)_{l \in L}$.    

\begin{proof}
We first show that for any $x = (x_1, \dots , x_r)  \in \TT^r$ and any open neighborhood $x \in V$ there exists a non-negative function $f \in \AS^{0,0}(\TT^r)$  and a neighborhood $V'$ of $x$ such that $f|_{V'} \equiv 1$ and $\supp(f) \subset V$ is compact. This is clear if $r = 1$. Otherwise, a basis of open neighborhoods of $x$ is given by products of open sets in $\T$, thus we may assume $V$ to be of this form. Then taking functions $f^i$ on neighborhoods of $x_i$ 
in  $\T$ with the above property for every $i \in [r]$ 
and defining $f(x_1,\dots,x_r) = \prod f^i(x_i)$ gives the desired function. 

The general statement of the lemma now 
follows from standard arguments, see for instance
 the proof in \cite[Theorem 1.11]{Warner}. 
\end{proof}

\begin{defn}
A \textit{polyhedron} in $\R^r$ is a subset defined by a finite system of  affine
 (non-strict)  inequalities. A \textit{face} of a polyhedron $\sigma$ is a polyhedron which is obtained by turning  some of the defining inequalities of $\sigma$ into equalities. For conventions of convex geometry we follow \cite[Appendix A]{Gubler2}. 

A \textit{polyhedron} in $\TT^r$ is the closure of a polyhedron in  $\R^r_I \cong \R^{r - |I|} \subset \T^r$ for some $I\subset[r]$. A \textit{face} of a polyhedron $\sigma$ in $\T^r$ is the closure of a face of $\sigma \cap \R_J$ for some $J \subset [r]$.
A \textit{polyhedral complex} $\CC$ in $\TT^r$ is a finite set of polyhedra in $\TT^r$,
satisfying the following properties:
\begin{enumerate}
\item
For  a polyhedron $\sigma \in \CC$, if $\tau$ is a face of $\sigma$ (denoted $\tau \prec \sigma$) we have $\tau \in \CC$. 
\item
For two polyhedra $\sigma, \tau \in \CC$ the intersection  $\sigma \cap \tau$ is a face of both $\sigma $ and $\tau$. 
\end{enumerate}
The maximal polyhedra, with respect to inclusion, are called facets.
The support of a polyhedral complex $\CS$  is the union of all its polyhedra and is denoted $\vert \CS \vert$. If $X = |\CS|$, then $X$ is called a \textit{polyhedral subspace} of $\T^r$ and $\CS$ is called a \textit{polyhedral structure} on $X$.

The relative interior of a polyhedron $\sigma$ in $\T^r$  is denoted $\inte(\sigma)$. 
Given a polyhedral complex $\CS$ in $\T^r$ let $\CS_I$ denote the union of polyhedra $\sigma \in \CS$ for which $\inte(\sigma)$ is  contained in $\R^r_I$. 
By the definition of polyhedral complexes in $\T^r$, the collection $\CS_I$ is a polyhedral complex in $\R^r_I$. 
Notice that $|\CC_I| = |\CS|_I$. 
 For  a polyhedron  $\sigma$ in 
$\T^r$, we denote 
$\sigma \cap \R^r_I$  by $\sigma_I$.  
\end{defn}

\begin{defn} \label{defn:contraction}
Let $\CC$ be a polyhedral complex in $\T^r$ and $\sigma \in \CC$. Let $x \in \sigma$ be of sedentarity  $I$. 
Define \textit{the tangent space of $\sigma$ at $x$} to be $\Linear(\sigma,x) := \Linear(\sigma_I ) \subset \R^r_I$, 
where $\Linear(\sigma_I)$ is the tangent space to $\sigma_I$ at any point in its relative interior.

For  $U \subset \R^r$ an open subset  containing $x$, $\alpha \in \AS^{p,q}(U)$, and $s \in [p]$ 
\textit{the contraction of $\alpha$ by $v \in \R^r$ in the $s$-th component} is a $(p-1, q)$ superform denoted
$\langle \alpha; v \rangle_s \in \AS^{p-1,q}(U)$. The form  $\langle \alpha; v \rangle_s$ 
evaluated at a collection of vectors $v_1, \dots, v_{p-1}, w_1, \dots, w_q \in \Linear(\sigma,x)$  is
$$\langle \alpha(x); v_1, \dots, v, \dots ,v_{p-1}, w_1,\dots,w_q \rangle,$$ where the vector $v$ is in the $s$-th position.

For $U \subset \T^r$  an open subset and $\alpha = (\alpha_I)_I \in \AS^{p,q}(U)$,
 \textit{the contraction of $\alpha$ by $v \in \R^r$ in the $s$-th component} is the superform
$\langle \alpha, v \rangle_s \in \AS^{p-1, q}(U)$  given by the collection  $(\langle \alpha_I, \pi_{I, \emptyset}(v) \rangle_s)_I$.

Let  $U$ be an open subset containing $x$.  Then the evaluation of a superform $\alpha \in \AS^{p, q}(U)$ at a collection of vectors $v_1, \dots, v_p, w_1, \dots, w_q \in \Linear(\sigma,x)$ is denoted 
$\langle \alpha_I(x); v_1, \dots,v_p, w_1,\dots,w_q \rangle$. 

\end{defn}

Next we consider the restriction of bigraded superforms to polyhedral complexes in $\T^r$. 
\begin{defn}
Let $\CC$ be a polyhedral complex in $\TT^r$ and $\Omega \subset |\CC|$  an open subset. Then a \textit{$(p, q)$-superform on $\Omega$} is given by a superform $\alpha \in \AS^{p,q}(U)$ such that  $U\subset \TM^r$  is an open subset satisfying  $ \Omega =  U \cap |\CC|$.   Two such pairs $(U,\alpha)$ and $(U',\alpha')$
 are equivalent if for any $\sigma \in \CC$, any $x \in \Omega \cap \sigma$ of sedentarity $I$ and all tangent vectors $v_1,\dots,v_p,w_1,\dots,w_q \in \Linear(\sigma, x)$ we have 
\begin{align*}
\langle \alpha_I(x); v_1.\dots,v_p, w_1,\dots,w_q \rangle = \langle \alpha'_I(x); v_1.\dots,v_p, w_1,\dots,w_q \rangle.
\end{align*}
Let ${\AS}^{p, q}(\Omega)$ denote the set of equivalence classes of pairs $(U,\alpha)$ as above.
\end{defn}

\begin{bsp}\label{ex:tropicalline1}
Consider the standard tropical line $L \subset \R^2$. The space $L$ is the support of the one dimensional fan 
consisting of three rays in directions $(-1, 0), (0,-1)$ and $(1, 1)$.  
Let $\Omega$ be an open connected neighborhood of the origin in $L$. 
Since $L$ is one dimensional  $\AS^{p,q}(\Omega) = 0 $ if  $\max(p, q) > 1$. 
The space $\AS^{0,0}(\Omega)$ is the space of maps $f \colon \Omega \to \R$ 
which extend to a smooth function $\overline{f}: U \to \R$ for some open neighborhood $U$ of $\Omega $ in $\R^2$.

By construction $\AS^{p,q}(\Omega)$ is a $\AS^{0,0}(\Omega)$ module via the wedge product.
The space $\AS^{1,0}(\Omega)$ is spanned by $d'x$ and $d'y$ over $\AS^{0,0}(\Omega)$, where $x$ and $y$ are the coordinates on $\R^2$.
Note that these two forms each vanish on one of the rays of $L$ and agree on the ray in direction $(1, 1)$. The space $\AS^{0,1}(\Omega)$ is analogous. 

The space of superforms $\AS^{1,1}(\Omega)$ is  spanned by $$d'x \vedge d''x, d'x \vedge d''y, d'y \vedge d''x, \text{ and } d'y \vedge d''y.$$
The forms $d'x \vedge d''y$ and $d'y \vedge d''x$ both vanish on the half rays of $L$ that are in directions $(-1, 0)$ and  $(0, -1)$. 
On the ray in direction  $(1,1)$  we  have  $d'x \vedge d''y = d'y \vedge d''x$. This  shows that $d'x \vedge d''y = d'y \vedge d''x$ holds on $\Omega$. 
Furthermore,  we find that in the stalk of $\AS^{1, 1}$ at the vertex of $L$, the forms $d'x \vedge d''x$, $d'x \vedge d''y  = d'y \vedge d''x$, and $d'y \vedge d''y$ are linearly independent  over the  stalk of $\AS^{0,0}$ at the same point.
This differs from the situation over the complex numbers, where  the  space of top dimensional forms is always a free module of rank one over the space of smooth functions. 
\end{bsp}

\begin{bem}
By definition we have that $\alpha$ and $\alpha'$ define the same superform on $\Omega$ if and only if for all $I \subset [r]$ the superforms $\alpha_I$ and $\alpha'_I$ define the same superform on $\Omega_I$. 
Moreover, to determine if two superforms are equivalent when restricted to $\Omega$, by continuity,  it is enough to consider only points in the relative interior of facets.

The differential map $d''$ and the wedge product both descend to 
forms in  $\mathcal{A}^{p, q}(\Omega)$ 
in the sense that if superforms $\alpha, \beta \in \AS^{p,q}(\Omega)$ are given by $\alpha' \in \AS^{p,q}(U)$ and $\beta' \in \AS^{p,q}(U')$ then defining  $d'' \alpha$ to be given by $d'' \alpha'$ and $\alpha \vedge \beta$ to be  given by $\alpha'|_{U \cap U'} \vedge \beta'|_{U \cap U'}$ is independent of the choices of $\alpha'$ and $\beta'$. 
\end{bem}

For an open subset $\Omega$ of a polyhedral space $X \subset \T^r$,  the space of superforms $\AS^{p,q}(\Omega)$ does not depend on the underlying polyhedral complex $\CS$. To see this  we introduce the multi-(co)tangent spaces. These spaces will appear again in Section \ref{sec:tropicalcohomology} in relation to tropical (co)homology.

\begin{defn}\label{def:multitangentspacePoint}
Let $\CS$ be a polyhedral complex in $\T^r$ and $x \in |\CS_I|$. 
Then the \emph{p-th multi-tangent space} and \emph{multi-cotangent spaces at $x$} are respectively  
\begin{align*}
{\bf F}_p(x) =  \sum \limits_{ \tau \in \CS_I: x \in \tau} \bigwedge^p \Linear(\tau, x) \subset \bigwedge^p (\R^r_I) \qquad \text{and} \qquad
{\bf F}^p(x) = \left( \sum \limits_{ \tau \in \CS_I: x \in \tau} \bigwedge^p \Linear(\tau, x) \right)^*.
\end{align*}
\end{defn}

\begin{lem}
Let  $\Omega$ be an open subset of a polyhedral space $|\CS|  \subset \T^r$. Then the space of superforms $\AS^{p,q}(\Omega)$ only depends on $\Omega$.
\end{lem}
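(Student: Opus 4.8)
The plan is to show that the equivalence relation on pairs $(U,\alpha)$ used to define $\AS^{p,q}(\Omega)$ does not depend on the polyhedral structure $\CS$ chosen on $X := |\CS|$; this suffices, since the set of pairs being quotiented — open subsets $U \subseteq \T^r$ with $U \cap X = \Omega$ together with a superform $\alpha \in \AS^{p,q}(U)$ — and the operations $d''$, $\vedge$ on representatives make no reference to $\CS$. First I would reformulate the relation pointwise. Fix $x \in \Omega$ and put $I = \sed(x)$; since the decomposable tensors $(v_1 \vedge \cdots \vedge v_p) \otimes (w_1 \vedge \cdots \vedge w_q)$ with all vectors in a fixed linear subspace span the corresponding tensor product, the defining conditions say exactly that $(U,\alpha) \sim_\CS (U',\alpha')$ if and only if for every $x \in \Omega$ the element $\alpha_I(x) - \alpha'_I(x) \in \bigwedge^p(\R^r_I)^* \otimes_\R \bigwedge^q(\R^r_I)^*$ annihilates
\[ G^{\CS}_{p,q}(x)\ :=\ \sum_{\sigma \in \CS,\ x \in \sigma}\ \bigwedge^p \Linear(\sigma,x) \otimes_\R \bigwedge^q \Linear(\sigma,x)\ \subseteq\ \bigwedge^p(\R^r_I) \otimes_\R \bigwedge^q(\R^r_I). \]
(By the preceding remark one may even restrict $x$ to relative interiors of facets.) Note that $G^{\CS}_{p,q}(x) \subseteq {\bf F}_p(x) \otimes {\bf F}_q(x)$ but is in general strictly smaller: in Example \ref{ex:tropicalline1}, at the vertex of the tropical line this subspace is $3$-dimensional while ${\bf F}_1 \otimes {\bf F}_1$ is $4$-dimensional. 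The lemma thus reduces to the claim that $G^{\CS}_{p,q}(x)$ depends only on $X$ and $x$.

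To prove that claim, given two structures $\CS$, $\CS'$ on $X$ I would pass to a common refinement $\CS''$ — a polyhedral complex with $|\CS''| = X$ refining both — built by refining the induced complexes on the strata $X \cap \R^r_I$ compatibly and reassembling. It then suffices to show $G^{\CS}_{p,q}(x) = G^{\CS''}_{p,q}(x)$ whenever $\CS''$ refines $\CS$. The inclusion "$\supseteq$" is immediate, since every $\sigma'' \in \CS''$ with $x \in \sigma''$ is contained in some $\sigma \in \CS$ with $x \in \sigma$ and $\Linear(\sigma'',x) \subseteq \Linear(\sigma,x)$. For "$\subseteq$" it is enough to treat a facet $\sigma \in \CS$ with $x \in \sigma$ (the summand of a face of $\sigma$ being contained in that of $\sigma$); working inside $\R^r_I$ with $\sigma_I = \sigma \cap \R^r_I$, the refinement $\CS''$ subdivides $\sigma_I$, and among its cells contained in $\sigma_I$ those of dimension $\dim \sigma_I$ cover $\sigma_I$, so $x$ lies in one of them, say $\sigma''$; since $\sigma'' \subseteq \sigma_I$ share an affine hull we get $\Linear(\sigma'',x) = \Linear(\sigma,x)$, so the $\sigma$-summand of $G^{\CS}_{p,q}(x)$ already appears in $G^{\CS''}_{p,q}(x)$. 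Running the same argument for $\CS'$ yields $G^{\CS}_{p,q}(x) = G^{\CS''}_{p,q}(x) = G^{\CS'}_{p,q}(x)$ for all $x$.

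Combining the two steps, the relations $\sim_\CS$ and $\sim_{\CS'}$ coincide, hence so do the resulting quotients, and the lemma follows. I expect the genuine obstacle to be the middle step carried out honestly in $\T^r$: namely, establishing the existence of common refinements of polyhedral complexes with equal support in this partially compactified setting, and checking that the covering argument recovering $\Linear(\sigma,x)$ interacts correctly with the passage to the strata $\R^r_I$ and with the compatibility conditions of superforms along boundary strata. If one prefers to avoid common refinements, the identity $G^{\CS}_{p,q}(x) = G^{\CS'}_{p,q}(x)$ can be obtained instead by a direct local density argument at each $x$, which amounts to the same content.
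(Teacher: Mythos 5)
Your proof is correct and follows the same core strategy as the paper's: reduce the lemma to the invariance, under change of polyhedral structure, of a pointwise-defined subspace built from the tangent spaces $\Linear(\sigma,x)$ of the faces containing $x$, and prove that invariance by passing to a common refinement and observing that the top-dimensional cells of the refinement contained in a facet $\sigma$ cover $\sigma$ and have the same linear span, so that the $\sigma$-summand is recovered. The one substantive difference is \emph{which} subspace you track, and here your version is actually the more careful one. The paper proves that ${\bf F}_p(x)$ is independent of the structure and then asserts that $\alpha$ vanishes in $\AS^{p,q}(\Omega)$ if and only if $\langle \alpha(x); v, w\rangle = 0$ for all $v \in {\bf F}_p(x)$, $w \in {\bf F}_q(x)$; the ``only if'' half of that assertion is not literally true, because the defining equivalence only tests $\alpha(x)$ against tensors $v\otimes w$ with $v$ and $w$ drawn from the \emph{same} face, i.e.\ against your $G^{\CS}_{p,q}(x) = \sum_{\sigma \ni x} \bigwedge^p \Linear(\sigma,x)\otimes \bigwedge^q\Linear(\sigma,x)$, which is in general a proper subspace of ${\bf F}_p(x)\otimes{\bf F}_q(x)$. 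Your $3$-versus-$4$-dimensional count at the vertex of the tropical line (equivalently, the identity $d'x\wedge d''y = d'y\wedge d''x$ of Example \ref{ex:tropicalline1}, where $d'x\wedge d''y - d'y\wedge d''x$ is zero as a superform yet does not annihilate $e_1\otimes e_2$) makes this distinction concrete. Since invariance of each diagonal summand follows from exactly the same facet-covering argument, proving invariance of $G^{\CS}_{p,q}(x)$ costs nothing extra and closes the small gap in the published argument. Like the paper, you leave the existence of common refinements of polyhedral complexes with equal support in $\T^r$ at the level of an assertion; you are right that this is where the remaining, essentially routine, work lies, and it is handled stratum by stratum exactly as you indicate.
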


\begin{proof}
For  $x \in \Omega$, we claim that the vector space ${\bf F}_p(x)$ only depends on $\Omega$ and $x$. 
To see this, consider a refinement  $\CS'$ of the polyhedral complex  $\CS$. 
If $\sigma \in \CS$  and  $\sigma' \in \CS'$ are both maximal faces containing $x$ such that $\sigma'$ is contained in $\sigma$,  
then $\Linear(\sigma, x) = \Linear(\sigma', x)$. 
Furthermore, for each facet $\sigma \in \CS$ there exists at least one  $\sigma' \in \CS'$ with the above property. 
This shows that $\Linear(\sigma, x) = \sum_{\sigma' \in \CS', x \in \sigma' \subset \sigma} \Linear(\sigma', x)$ 
which in turn implies  that the definition of ${\bf F}_p(x)$ is the same for polyhedral structures $\CS$ and $\CS'$.  

Now given another polyhedral complex $\CS^{''}$ such that $\Omega$ is an open subset of $| \CS^{''}|$, we can find a polyhedral complex $\CS'$ which is a common refinement of both $\CS$ and $\CS^{''}$ when restricted to $\Omega$. 
It follows from the statement proved above that the vector space ${\bf F}_p(x)$ depends only on $x$ and $\Omega$. 

Now $\alpha \in \AS^{p,q}(\Omega)$  equals zero if and only if $\langle \alpha(x) ; v, w \rangle = 0$ for all $x \in \Omega, v \in {\bf F}_p(x),$  and $w \in {\bf F}_q(x)$. 
Finally, since  ${\bf F}_q(x)$ is independent of the polyhedral structure on $\Omega$ so is $\AS^{p,q}(\Omega)$. This completes the proof of the lemma. 
\end{proof}

For  a polyhedral subspace $X$ in $ \T^r$, the functor on open subsets of $X$ given by $\Omega \mapsto \AS_X^{p,q}(\Omega)$ will be denoted by $\AS_X^{p, q}$ or simply $\AS^{p,q}$ if the space $X$  is clear. The next lemma shows that this is an acyclic sheaf, where by acyclicity we always mean with respect to both the functor of global sections and the functor of global sections with compact support.

\begin{lem} \label{lem:sheaf}
For a polyhedral subspace $X$ in $\TT^r$, the presheaf
\begin{align*}
\Omega \mapsto \AS^{p,q}_X(\Omega)
\end{align*}
is a sheaf on $X$. 
Furthermore, this sheaf is fine, hence soft and acyclic. 
\begin{proof}
We start with the case $X = \TT^r$. In this case, all of the sheaf axioms are clearly satisfied  except for  the gluing property. Given a collection of superforms agreeing on intersections,
we can glue on each $\R^r_I$ getting a collection of superforms $\alpha_I$. The condition of compatibility  
along the boundary strata is  respected for the glued superforms since it is local and was respected for the superforms before gluing. 

For the general case we rely on the existence of partitions of unity.  Let $(\Omega^l)_{l \in L}$ be a collection of open sets and suppose that we have superforms $\alpha^l \in \AS^{p,q}(\Omega^l)$ which agree on the intersections $\Omega^l \cap \Omega^{l'}$ for $l, l' \in L$ and are the restrictions to $X$ of superforms $\beta^l \in \AS^{p,q}(U^l)$ for $\Omega^l = U^l \cap X$. We take a partition of unity $(f^k)_{k \in K}$ subordinate to the cover $(U^l)_{l \in L}$. By definition there is a map $s \colon K \to L$, so that if $s(k) = l$, then $f^k$ is supported on  $U_l$.  
Thus  $\beta = \sum \limits_{l \in L} \sum \limits_{k: s(k) = l} f^k \beta^l$ is a superform on the union $\cup_l \Omega^l$. Moreover for a fixed $l_0$ we have
\begin{align*}
\beta|_{\Omega^{l_0}} &= \sum \limits_{l \in L} \sum \limits_{k: s(k) = l} f^k|_{\Omega^{l_0}} \beta^l|_{\Omega^{l_0}} = \sum \limits_{l \in L} \sum \limits_{k: s(k) = l} f^k|_{\Omega^{l_0}} \alpha^l|_{\Omega^{l_0}} \\
&= \sum \limits_{l \in L} \sum \limits_{k: s(k) = l} f^k|_{\Omega^{l_0}} \alpha^{l_0} = (\sum \limits _{k \in K} f^k|_{\Omega^{l_0}}) \alpha^{l_0} = \alpha^{l_0}.
\end{align*}
 Therefore the superform given by $\beta$ restricted to $\cup_l \Omega^l$ gives the gluing of the superforms $\alpha^l$ above. This shows that $\AS_{X}^{p, q}$ is a sheaf on $X$. 
 
The fact that $\AS^{0,0}$ is fine follows from Lemma \ref{lem:partitionsofunity}. Then the  sheaves $\AS^{p,q}$ are also fine since  they are $\AS^{0,0}$-modules via the wedge product. Softness and acyclicity for global sections follows from  \cite[Chapter II, Proposition 3.5 \& Theorem 3.11]{Wells} respectively and acyclicity for sections with compact support follows from  \cite[III, Theorem 2.7]{Iversen}. 
\end{proof}
\end{lem}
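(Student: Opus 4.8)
The plan is to verify the sheaf axioms first, then establish fineness, and finally deduce softness and acyclicity from standard references. The only nontrivial sheaf axiom is gluing. First I would treat the base case $X = \TM^r$: given superforms agreeing on overlaps, one glues them separately on each stratum $\RM^r_I$, using that ordinary differential forms form a sheaf on the manifold $\RM^r_I \cong \RM^{r-|I|}$; the resulting collection $(\alpha_I)_I$ satisfies the compatibility condition along strata because that condition is purely local and is preserved under passing to a cover and regluing. This gives an element of $\AS^{p,q}(\TM^r)$ restricting correctly. For a general polyhedral subspace $X \subset \TM^r$, the subtlety is that sections are defined only up to the equivalence relation on pairs $(U,\alpha)$, so one cannot glue ``on the nose''. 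Here I would instead use a partition of unity: given a cover $(\Omega^l)$ of an open $\Omega \subset X$ with $\Omega^l = U^l \cap X$ and compatible sections $\alpha^l$ represented by $\beta^l \in \AS^{p,q}(U^l)$, take a partition of unity $(f^k)_{k\in K}$ subordinate to $(U^l)_{l\in L}$ (Lemma \ref{lem:partitionsofunity}) with assignment $s\colon K\to L$, and set $\beta := \sum_{l}\sum_{k\colon s(k)=l} f^k\beta^l$, a well-defined superform on $\bigcup_l U^l$. A direct computation restricting to a fixed $\Omega^{l_0}$, using that $\alpha^l$ and $\alpha^{l_0}$ agree on $\Omega^l \cap \Omega^{l_0}$ and that $\sum_k f^k \equiv 1$, shows $\beta|_{\Omega^{l_0}} = \alpha^{l_0}$; uniqueness of the glued section is immediate since a section vanishing on all $\Omega^l$ vanishes on overlaps and hence on $\Omega$.

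For fineness, I would first observe that $\AS^{0,0}$ is fine: given a locally finite open cover, Lemma \ref{lem:partitionsofunity} produces a subordinate partition of unity by smooth functions, and multiplication by these functions gives the required endomorphisms of the sheaf summing to the identity. Then each $\AS^{p,q}$ is an $\AS^{0,0}$-module via the wedge product with $(0,0)$-superforms (i.e.\ smooth functions), so it is fine as well. Fineness implies softness, and softness implies acyclicity for the global sections functor by \cite[Chapter II, Proposition 3.5 \& Theorem 3.11]{Wells}; acyclicity for the compactly supported global sections functor follows from \cite[III, Theorem 2.7]{Iversen}.

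I expect the main obstacle to be the gluing step in the general case, specifically checking that the partition-of-unity construction is well-defined on equivalence classes of pairs $(U,\alpha)$ rather than on honest superforms in $\TM^r$: one must confirm that the ambient representative $\beta$ depends, up to the equivalence relation, only on the $\alpha^l$ and not on the chosen representatives $\beta^l$ or the chosen partition of unity, which comes down to the remark that two ambient superforms restrict to the same superform on $\Omega$ precisely when their evaluations on all multitangent vectors at all points of $\Omega$ agree — and this is a linear, pointwise condition preserved under multiplication by functions and summation. Everything else is bookkeeping with the stratification and a citation to the standard theory of soft sheaves.
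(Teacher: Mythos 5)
Your proposal is correct and follows essentially the same route as the paper's proof: the base case $X=\TM^r$ by gluing stratum-wise, the general case via the partition-of-unity construction $\beta=\sum_l\sum_{k\colon s(k)=l}f^k\beta^l$ with the same restriction computation, and fineness of $\AS^{0,0}$ from Lemma \ref{lem:partitionsofunity} propagated to $\AS^{p,q}$ via the module structure, with the same citations for softness and acyclicity. Your added remark on well-definedness over equivalence classes of pairs $(U,\alpha)$ is a point the paper leaves implicit, but it does not change the argument.
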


\begin{defn}
Let $X$ be a polyhedral subspace of $\T^r$ and $\Omega$ an open subset. 
The \textit{support} of a superform $\alpha \in \AS^{p,q}(\Omega)$ is its support in the sense of sheaves, 
thus it consists of the points $x$ which do not have a neighborhood $\Omega_x$ such that $\alpha|_{\Omega_x} = 0$. 
The space of $(p, q)$-superforms with compact support on $\Omega$ is denoted $\AS^{p, q}_c(\Omega)$.
\end{defn}

\begin{lem} \label{lem:supportboundary}
Let $X$ be a polyhedral subspace of $\T^r$ and $\Omega$ an open subset. 
Let $\alpha = (\alpha_I)_{I} \in \AS^{p,q}(\Omega)$. 
Then we have $\supp \alpha=\bigcup_I \supp \alpha_I$. 
\end{lem}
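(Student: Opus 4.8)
The plan is to establish the two inclusions $\bigcup_I \supp \alpha_I \subseteq \supp \alpha$ and $\supp \alpha \subseteq \bigcup_I \supp \alpha_I$ separately. Both rely on two facts recorded above: that for an open subset $\Omega' \subseteq \Omega$ one has $\alpha|_{\Omega'} = 0$ if and only if $\alpha_I|_{\Omega'_I} = 0$ for every $I \subseteq [r]$ (the remark following Example~\ref{ex:tropicalline1}, taken with $\alpha' = 0$), and the condition of compatibility of superforms along strata (Definition~\ref{def:forms}).

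For the first inclusion it suffices to show $\supp \alpha_I \subseteq \supp \alpha$ for each fixed $I$. Suppose $x \notin \supp \alpha$ and pick a neighborhood $\Omega_x$ of $x$ in $\Omega$ with $\alpha|_{\Omega_x} = 0$. By the first fact, $\alpha_I$ vanishes on $\Omega_x \cap \Omega_I$. If $x \in \Omega_I$ this is a neighborhood of $x$ in $\Omega_I$, so $x \notin \supp \alpha_I$; and if $x \notin \Omega_I$ then $x \notin \supp \alpha_I$ anyway, since $\supp \alpha_I \subseteq \Omega_I$. Only the first fact is used here.

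For the reverse inclusion, take $x \notin \bigcup_I \supp \alpha_I$ and set $I := \sed(x)$. Because $\supp \alpha_J \subseteq \Omega_J$ for each $J$ and $x \in \Omega_I$, this forces $x \notin \supp \alpha_I$, so $\alpha_I$ vanishes on some neighborhood $V$ of $x$ in $\Omega_I$. By the compatibility condition there is a neighborhood of $x$ on which $\alpha$ is determined by $\alpha_I$, and I would shrink it to a box-shaped neighborhood $\Omega_x = \big( \prod_i B_i \big) \cap X$ of $x$, where $B_i = [-\infty, a_i)$ for $i \in I$ and $B_i$ is a short interval around $x_i$ for $i \notin I$, chosen so small that $(\Omega_x)_I \subseteq V$. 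Every point of such $\Omega_x$ has sedentarity contained in $I$, so $(\Omega_x)_J = \emptyset$ for $J \not\subseteq I$; and for $J \subseteq I$ the compatibility identities give $\alpha_J|_{(\Omega_x)_J} = \pi_{IJ}^*\big( \alpha_I|_{(\Omega_x)_I} \big) = 0$, since $\alpha_I$ vanishes on $(\Omega_x)_I \subseteq V$. Thus every component of $\alpha|_{\Omega_x}$ vanishes, so $\alpha|_{\Omega_x} = 0$ by the first fact, and $x \notin \supp \alpha$.

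The delicate point, which I expect to be the main obstacle, is the shrinking in the previous paragraph: one must check that passing from the neighborhood supplied by Definition~\ref{def:forms} to a smaller box-shaped one preserves the surjectivity $\pi_{IJ}((\Omega_x)_J) = (\Omega_x)_I$ and the pullback identities $\pi_{IJ}^*(\alpha_I|_{(\Omega_x)_I}) = \alpha_J|_{(\Omega_x)_J}$. The identities restrict from any larger neighborhood on which they hold, surjectivity onto the corresponding face is automatic for boxes, and shrinking the interval radii $B_i$ ($i \notin I$) keeps the box inside the original neighborhood while eventually forcing $(\Omega_x)_I \subseteq V$. This also uses that $\pi_{IJ}$ carries $X \cap \R^r_J$ into $X \cap \R^r_I$ near $x$ — a standard property of closures of polyhedra in $\T^r$, implicit in the compatibility condition for superforms on polyhedral subspaces — which is what makes $\pi_{IJ}^*(\alpha_I|_{(\Omega_x)_I})$ a well-defined superform on $(\Omega_x)_J$ in the first place.
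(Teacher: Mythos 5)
Your proof is correct and follows essentially the same route as the paper's: the inclusion $\bigcup_I \supp \alpha_I \subseteq \supp \alpha$ is the easy direction, and the reverse inclusion is obtained exactly as in the paper by using the compatibility condition to produce a neighborhood of $x$ on which $\alpha$ is determined by $\alpha_I$ and whose $I$-stratum lies inside the vanishing set of $\alpha_I$. Your explicit box-shaped shrinking argument merely fills in the step the paper dispatches with ``we may find a neighborhood $V$ of $x$ \dots where $V_I \subset U$.''
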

\begin{proof}
Consider  $x \in \Omega_I$. 
If $x \notin \supp(\alpha_I)$, then there exists a neighborhood $U$ of $x$ in $\Omega_I$ such that $\alpha_I|_{U} = 0$. 
By the condition of compatibility, we may find a  neighborhood $V$ of $x$ in $X$ such that 
$\alpha|_V$ is determined by $\alpha|_{V_I}$ on $V$ and where $V_I \subset U$. 
Therefore,  $\alpha|_V = 0$.
This shows $\supp(\alpha) \subset  \bigcup_I \text{supp}(\alpha_I)$. 
The other inclusion is immediate, thus we have equality. 
\end{proof}

\subsection{Polyhedral spaces}\label{sec:polyhedralspace}

This subsection defines superforms on 
polyhedral spaces. These are spaces 
equipped with an atlas of charts to   polyhedral subspaces in $\TT^r$,  with coordinate changes given by extensions of affine maps. 
 First we establish pullbacks of superforms along extended affine maps, which permit the gluing of the sheaves $\AS^{p,q}$ defined in the last subsection.

Let $F\colon  \R^{r'} \rightarrow \R^r$ be an affine map and let $M_F$ denote the matrix representing the linear part of $F$. Let $I$ be the set of $i \in [r']$ such that the $i$-th column of 
$M_F$ 
 has only non-negative
entries. Then $F$  can be extended  to a map $$F\colon \left( \bigcup \limits_{J \subset I} \R^{r'}_J \right)\rightarrow \TT^r$$ by continuity, (equivalently,  using the usual $- \infty$-conventions for arithmetic). The extended map is also denoted by $F$. 

\begin{defn}
Let  $U' \subset \TT^{r'}$ be an open subset, then a map $F\colon U' \rightarrow \TT^r$, which is the restriction to $U'$ of a map arising as above is called an \textit{extended affine map}. Note that this only makes sense once we have $\sed(x) \subset I$  for all $x \in U'$. Similarly, for a polyhedral subspace $X'$ and an open subset $\Omega'$ of $X'$ an extended affine map $F\colon \Omega' \rightarrow \T^r$ is given by the restriction of an extended affine map to $\Omega'$. 
An extended affine map is called an \textit{integral extended affine map}, 
if it is the extension of an integer affine map  $\R^{r'} \to \R^r$, i.e.~its linear part is induced by a map of  the standard lattices 
$\Z^{r'} \to \Z^{r}$.
\end{defn}

\begin{defn}  [Pullback] \label{def:pullback} 
Let $U' \subset \TT^{r'}$ be an open subset and $F\colon U' \rightarrow \TT^r$ be an extended affine map. Let $U \subset \T^r$ be an open subset such that $F(U') \subset U$. Define 
\begin{align*} 
F\colon \{\text{sedentarities of points in } U'\} &\rightarrow 2^{[r]} \\ 
I' &\mapsto \sed(F(x)) \text{ for some and then every } x \in \R^{r'}_{I'}.
\end{align*}
Notice that this map respects inclusions.
$F$ induces an affine map $F_{I'}\colon \R^{r'}_{I'} \rightarrow \R^{r}_{F(I')}$ with $F_{I'}(U'_{I'}) \subset U_{F(I')}$. 
The \emph{pullback of the superform $\alpha = (\alpha_I)_I \in \AS^{p,q}(U)$  along $F$} is the collection of superforms $F^*(\alpha) := (F^*_{I'}(\alpha_{F(I')}))_{I'}$, 
where $F^*_{I'}(\alpha_{F(I')}) 
 \in \AS^{p, q}(U'_I)$. 
The next lemma shows that this collection satisfies the compatibility condition, and hence defines a superform on $U'$. Thus we have a pullback map $F^*\colon \AS^{p,q}(U) \rightarrow \AS^{p,q}(U')$.
\end{defn}

\begin{lem}\label{lem:pullback}
The pullback of a $(p,q)$-superform $\alpha$ on $U \subset \T^r$ 
along an extended affine map $F \colon U' \rightarrow U$ 
is a $(p, q)$-superform on $U' \subset \T^{r'}$. 
\end{lem}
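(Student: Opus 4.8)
\textbf{Proof plan for Lemma \ref{lem:pullback}.}

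The plan is to verify directly that the collection $F^*(\alpha) = (F^*_{I'}(\alpha_{F(I')}))_{I'}$ satisfies the two conditions of Definition \ref{def:forms}. Condition $i)$ is immediate: for each sedentarity $I'$ of a point in $U'$ the affine map $F_{I'}\colon \R^{r'}_{I'} \to \R^r_{F(I')}$ carries $U'_{I'}$ into $U_{F(I')}$, so the ordinary pullback of the superform $\alpha_{F(I')} \in \AS^{p,q}(U_{F(I')})$ along $F_{I'}$ lies in $\AS^{p,q}(U'_{I'})$. So the content of the lemma is condition $ii)$: the compatibility of the $F^*_{I'}(\alpha_{F(I')})$ along strata.

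First I would fix $x \in U'$ of sedentarity $I'$, set $I = F(I')$, and choose (using that $x$ has a basis of box-shaped neighborhoods, as in the proof of Lemma \ref{lem:partitionsofunity}, together with the compatibility condition for $\alpha$ at $F(x)$) a neighborhood $U_{F(x)}$ of $F(x)$ in $U$ on which $\alpha$ is determined by $\alpha_I$, i.e.\ $\pi_{IK}^*(\alpha_I|_{U_{F(x),I}}) = \alpha_K|_{U_{F(x),K}}$ for every $K \subset I$; then I would take $U_x \subset U'$ a suitable box-neighborhood of $x$ with $F(U_x) \subset U_{F(x)}$. The key step is the commutativity of the square relating the projections and $F$: for $J' \subset I'$ one has $F(J') \subset F(I') = I$, and the diagram
\begin{align*}
\begin{array}{ccc}
\R^{r'}_{J'} & \xrightarrow{\ F_{J'}\ } & \R^r_{F(J')} \\
\downarrow{\scriptstyle \pi_{I'J'}} & & \downarrow{\scriptstyle \pi_{I F(J')}} \\
\R^{r'}_{I'} & \xrightarrow{\ F_{I'}\ } & \R^r_{I}
\end{array}
\end{align*}
commutes. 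This is a coordinate-wise check: on a coordinate $i \in I$ both composites output $-\infty$, and on a coordinate $i \notin I$ the affine map $F$ does not involve the $-\infty$ entries of the input (those columns of $M_F$ are the ones indexed by $F^{-1}$ of $I$, by the definition of the extension), so the two routes agree. Also I need $\pi_{I'J'}(U_{x,J'}) = U_{x,I'}$, which holds by the choice of $U_x$ as a box, exactly as in Definition \ref{def:forms}.

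Granting the commuting square, the verification of $ii)$ is a short diagram chase:
\begin{align*}
\pi_{I'J'}^*\bigl(F^*_{I'}(\alpha_I)|_{U_{x,I'}}\bigr)
= (F_{I'}\circ \pi_{I'J'})^*(\alpha_I)
= (\pi_{I F(J')} \circ F_{J'})^*(\alpha_I)
= F^*_{J'}\bigl(\pi_{I F(J')}^*(\alpha_I)\bigr)
= F^*_{J'}(\alpha_{F(J')})\big|_{U_{x,J'}},
\end{align*}
where the last equality uses that $\alpha$ is determined by $\alpha_I$ on $U_{F(x)}$ and $F(U_{x,J'}) \subset U_{F(x),F(J')}$. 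Hence $F^*(\alpha)$ meets condition $ii)$ at $x$, and since $x$ was arbitrary it is a superform on $U'$; that $F^*$ is thereby a well-defined map $\AS^{p,q}(U) \to \AS^{p,q}(U')$ is then clear, and its independence of the choice of representative $(U,\alpha)$ (once one passes to polyhedral subspaces) follows because pullback commutes with restriction. The only genuinely delicate point is the bookkeeping of sedentarities: one must be careful that $F(J')$ really is $\sed(F(x'))$ for $x' \in \R^{r'}_{J'}$ and that this is independent of $x'$, which is where the hypothesis $\sed(x) \subset I$ for all $x \in U'$ (so that $F$ is defined on all of $U'$) is used; everything else is a routine check.
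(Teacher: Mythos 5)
Your proposal is correct and follows essentially the same route as the paper: the whole content is the compatibility condition $ii)$, which both you and the paper reduce to the identity $F_{I'}\circ\pi_{I'J'}=\pi_{F(I')F(J')}\circ F_{J'}$ followed by the same four-step chain of pullback equalities. The only difference is that you spell out the coordinate-wise verification of that commuting square and the choice of box neighborhoods, which the paper simply asserts.
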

\begin{proof}
We have to verify the condition of compatibility
of superforms along the strata. 
For $J' \subset I'$ we have $F(J') \subset F(I')$ and $F_{I'} \circ \pi_{I', J'} = \pi_{F(I')F(J')} \circ F_{J'}$. Thus if $\alpha \in \AS^{p, q}(U)$ is determined by $\alpha_{F(I')}$ on $U_x$, then we have 
\begin{align*}
\pi^*_{I'J'} (F^* (\alpha)_{I'}) &=\pi^*_{I'J'} F_{I'}^* (\alpha_{F(I')}) \\
&=  F_{J'}^* (\pi_{F(I')F(J')}^* (\alpha_{F(I')})) \\
&= F^*_{J'} (\alpha_{F(J')}) \\
&= (F^*(\alpha))_{J'},
\end{align*}
which shows that $F^*(\alpha)$ is determined by $F^*(\alpha)_{I'}$ on $F^{-1}(U_x)$. This shows the required compatibility.
\end{proof}

\begin{lem}\label{lem:pullbackC}  \label{chap4:lem:pullbackII}
Let $X \subset \T^r$ and  $X' \subset \T^{r'}$ be polyhedral subspaces and  let $\Omega \subset X$ and $\Omega' \subset X'$ be open subsets. If  $F \colon \Omega' \rightarrow \Omega$ is  an extended affine map, then  there exists a well defined pullback $F^*\colon  \AS^{p,q}(\Omega) \rightarrow \AS^{p,q}(\Omega')$, which is induced by the pullback in Definition \ref{def:pullback}. Moreover,  the pullback  is functorial and commutes with the differential $d''$ and the wedge product. 
\begin{proof}
Let $\alpha \in \AS^{p,q}(\Omega)$, then there exist open subsets $U' \subset \TT^{r'}$ and $U \subset \TT^r$ such that $\alpha$ is defined by some $\beta \in \AS^{p,q}(U)$, $F(U') \subset U$, and  $U' \cap X' = \Omega'$. Now the pullback $F^*(\beta) \in \AS^{p,q}(U')$ defines a superform on $\Omega'$. Set this to be $F^*(\alpha)$. To see that this is independent of the choice of $\beta$ we suppose that $\gamma$ is another superform on an open set
defining $\alpha$ on $\Omega$. After intersecting the domains of definition of $\beta$ and $\gamma$, we may assume that $\beta$ and $\gamma$ are defined on the same open set $U$. Since $\beta|_{\Omega} = \gamma|_{\Omega}$ we have that $\beta|_{\Omega_{F(I')}} = \gamma|_{\Omega'_{F(I')}}$ for all $I' \subset [r']$. Since the pullback via affine maps between vector spaces is well defined on polyhedral complexes  \cite[3.2]{Gubler}, we have $F_{I'}^*(\beta)|_{\Omega'_{I'}} = F_{I'}^*(\gamma)|_{\Omega'_{I'}}$ for all $I' \subset [r']$ and therefore $F^*(\beta)|_{\Omega'} = F^*(\gamma)|_{\Omega'}$, so that the pullback is well defined. 
 The last two statements of the lemma are direct consequences of the definition of pullbacks of forms along extended affine maps and the fact that the pullback by affine maps is functorial and commutes with $d''$ and the wedge product. 
\end{proof}
\end{lem}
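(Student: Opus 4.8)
The plan is to build $F^{*}$ on representatives first, and then check that it descends to equivalence classes; the remaining assertions (functoriality, compatibility with $d''$ and $\wedge$) will then follow by reducing everything, stratum by stratum, to the corresponding statements for ordinary affine maps of vector spaces, which are already built into Definition~\ref{def:pullback} and Lemma~\ref{lem:pullback}. Recall that an element of $\AS^{p,q}(\Omega)$ is an equivalence class of pairs $(U,\beta)$ with $U\subset\T^{r}$ open, $\Omega=U\cap X$, and $\beta\in\AS^{p,q}(U)$. Given an extended affine map $F\colon\Omega'\to\Omega$, by definition $F$ is the restriction of an extended affine map defined on an open subset of $\T^{r'}$; shrinking that subset I obtain an open $U'\subset\T^{r'}$ with $U'\cap X'=\Omega'$ and $F(U')\subset U$, and I set $F^{*}[(U,\beta)]:=[(U',F^{*}\beta)]$, where $F^{*}\beta\in\AS^{p,q}(U')$ is the pullback from Definition~\ref{def:pullback} (a superform on $U'$ by Lemma~\ref{lem:pullback}).

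The crux is independence of the chosen representative. Suppose $(U,\beta)$ and $(U,\gamma)$ — after intersecting domains we may assume a common $U$ — represent the same superform on $\Omega$. By Definition~\ref{def:pullback} the pullbacks $F^{*}\beta$ and $F^{*}\gamma$ are given stratum by stratum by $F_{I'}^{*}\beta_{F(I')}$ and $F_{I'}^{*}\gamma_{F(I')}$ for the ordinary affine maps $F_{I'}\colon\R^{r'}_{I'}\to\R^{r}_{F(I')}$, so it suffices to show that for each sedentarity $I'$ these two forms agree on the polyhedral set $\Omega'_{I'}$, knowing that $\beta_{F(I')}$ and $\gamma_{F(I')}$ agree on $\Omega_{F(I')}$. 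I would argue this by recalling that agreement on a polyhedral set is detected by evaluation on tangent vectors $\Linear(\sigma,x)$ at its points, that after passing to a common polyhedral refinement the affine map $F_{I'}$ carries each cell of a polyhedral structure on $X'_{I'}$ into a cell of one on $X_{F(I')}$ (intersect a structure on the source with the preimages $F_{I'}^{-1}(\sigma)$ of cells of a structure on the target), hence its differential $\mathrm{d}F_{I'}$ carries $\Linear(\sigma',x)$ into $\Linear(\sigma,F_{I'}(x))$, and then invoking the known compatibility of pullback of superforms along affine maps of vector spaces with restriction to polyhedral complexes \cite[3.2]{Gubler}. It follows that $F_{I'}^{*}\beta_{F(I')}$ and $F_{I'}^{*}\gamma_{F(I')}$ have identical evaluations on all tangent vectors of $\Omega'_{I'}$, so they define the same superform there, and therefore $F^{*}\beta$ and $F^{*}\gamma$ define the same superform on $\Omega'$.

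Once $F^{*}$ is well defined, functoriality is immediate from the stratum-wise description. For composable extended affine maps $\Omega''\xrightarrow{G}\Omega'\xrightarrow{F}\Omega$ the sedentarity assignments compose and $(F\circ G)_{I''}=F_{G(I'')}\circ G_{I''}$, so $(F\circ G)^{*}=G^{*}\circ F^{*}$ reduces to functoriality of pullback along affine maps of vector spaces. Likewise, $d''$ and $\wedge$ act stratum-wise, $(d''\alpha)_{I}=d''\alpha_{I}$ and $(\alpha\wedge\beta)_{I}=\alpha_{I}\wedge\beta_{I}$, and pullback along affine maps of vector spaces commutes with $d''$ and with the wedge product, so the identities $F^{*}d''=d''F^{*}$ and $F^{*}(\alpha\wedge\beta)=F^{*}\alpha\wedge F^{*}\beta$ follow on each stratum, hence globally.

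I expect the only genuinely delicate point to be the reduction inside the well-definedness step: the claim that after a common polyhedral refinement the affine map $F_{I'}$ sends cells into cells, so that $\mathrm{d}F_{I'}$ carries tangent spaces into tangent spaces, is exactly what allows one to pass from ``$\beta_{F(I')}$ and $\gamma_{F(I')}$ agree on $\Omega_{F(I')}$'' to ``$F_{I'}^{*}\beta_{F(I')}$ and $F_{I'}^{*}\gamma_{F(I')}$ agree on $\Omega'_{I'}$''. Everything else is sedentarity bookkeeping together with the cited vector-space facts.
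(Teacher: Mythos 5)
Your proposal is correct and follows essentially the same route as the paper: define $F^*$ on a representative $(U,\beta)$ via Definition \ref{def:pullback}, check independence of the representative stratum by stratum by reducing to the compatibility of affine pullback with restriction to polyhedral complexes (the paper simply cites \cite[3.2]{Gubler} where you spell out the cell-into-cell refinement argument), and deduce functoriality and compatibility with $d''$ and $\wedge$ from the corresponding vector-space facts. The extra detail you supply in the well-definedness step is a legitimate unpacking of the cited reference rather than a different argument.
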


We can now consider spaces equipped with an atlas of charts to polyhedral subspaces in $\T^r$. The following definition is a generalization of the definition of  tropical spaces given   in \cite{Mik:Applications, MikZhar, BIMS15}. We do not require our polyhedral subspaces to be rational, also the transition maps are required only to be extended affine maps, not integral affine. 
We also remove the finite type condition on the charts in \cite[Definition 1.2]{MikZhar}.

\begin{defn} \label{def:polyhedralspace}
A \textit{polyhedral space $X$} is a paracompact, second countable Hausdorff topological space with an atlas of charts $(\varphi_i \colon U_i \rightarrow \Omega_i \subset X_i)_{i \in I}$ such that:
\begin{enumerate}
\item
The $U_i$ are open subsets of $X$, the $\Omega_i$ are open subsets of polyhedral subspaces $X_i \subset \T^{r_i}$, and 
$\varphi_i \colon U_i \rightarrow \Omega_i$ is a homeomorphism for all $i$; 
\item
For all $i,j \in I$ the transition map
\begin{align*}
\varphi_i \circ \varphi^{-1}_j \colon \varphi_j(U_i \cap U_j) \rightarrow X_i 
\end{align*}
is an extended affine map. 
\end{enumerate}

\end{defn}

As in usual manifold theory, two atlases on $X$ are considered equivalent if their union is an atlas on $X$. 

The \textit{dimension} of $X$ is the maximal dimension among polyhedra which intersect the $\Omega_i$.
 The polyhedral complex is \textit{pure dimensional} if the dimension of the maximal, with respect to inclusion, polyhedra intersecting the open sets  $\Omega_i \subset X_i$ is
constant.

\begin{bsp} \label{ex:tropicalprojectivespace}
The tropical projective space $\TT P^r$ is the space 
\begin{align*}
(\T^{r+1} \setminus\{(-\infty,\dots,-\infty)\}) / \sim , \text{ where } x \sim y \text{ if there exists }
\lambda \in \R \text{ s.t. } x + (\lambda,\dots,\lambda) = y.
\end{align*}
For $i \in [r+1]$ the space $U_i = \{ [x] \in \T P^r \,\vert\, x_i \neq - \infty \}$ is homeomorphic to $\T^r$ via the maps
\begin{align*}
&\varphi_i\colon U_i \to \T^r; \; [x] \mapsto (x_j - x_i)_{j \in [r+1] \setminus \{i\}} \text{ and } \\
&\varphi_i^{-1}\colon \T^r \to U_i; \; x \mapsto (x_1,\dots,x_{i-1}, 0, x_{i+1}, \dots, x_r)
\end{align*}
The transition maps are given by 
\begin{align*}
\varphi_j \circ \varphi_i^{-1}\colon \T^r \setminus \R^r_{\{j\}} \to \T^r, \; x \mapsto (x_1 - x_j,\dots, x_{i-1} - x_j, -x_j ,x_{i+1}-x_j,\dots,x_r - x_j),
\end{align*}
which is an extended affine map. 
Thus $\T P^r$ together with the atlas $(\varphi_i\colon U_i \to \T^r)_{i \in [r+1]}$ is a polyhedral space.
\end{bsp}

\begin{defn}
Let $X$ be a polyhedral space with atlas $(\varphi_i\colon U_i \rightarrow \Omega_i \subset X_i)_{i \in I}$. 
The sheaf of superforms  $\AS^{p,q}_{U_i}$ is given by the pullback of the sheaves $\AS^{p,q}_{\Omega_i}$ via $\varphi_i$.
Then the  \textit{sheaf $\AS^{p,q}_{X}$ of $(p,q)$-superforms on $X$} is defined by  gluing of the sheaves $\AS^{p,q}_{U_i}$. 
The pullback of forms along the charts $\varphi_i$ is well defined and functorial, so this gives a well defined sheaf of superforms on $X$. We also again denote the sections with compact support by $\AS^{p,q}_c(X)$. 
\end{defn}

\begin{bsp} \label{ex:part2}
Let $[0,1]$ be the closed unit interval and  define the following charts: 
\begin{align*}
&\varphi_0: [0,1) \cong \TT^1  &&\varphi_1: (0,1]  \cong \TT^1 \\
&x \mapsto \tan ((x - 1/2) \pi)  &&x \mapsto -\tan ((x - 1/2) \pi).
\end{align*}
The interval $[0, 1]$ equipped with these two charts defines a polyhedral space, denoted by $X$. 
The single transition map for this atlas is
 $\varphi_0 \circ \varphi_1^{-1} \colon \R \rightarrow \R  = (x \mapsto -x)$. 
In Example \ref{ex:part1}, we saw that $(0,0)$-superforms on $\T$ are functions which are locally constant around $-\infty$. Thus $(0,0)$-superforms on $X$ are locally constant around both $0$ and $1$. Furthermore, similar to Example \ref{ex:part1}, superforms of positive degree vanish locally at  the two  boundary points of $X$. 
 
 The space $[0, 1]$ can also be equipped with an atlas consisting of a single chart which is just the inclusion 
$[0,1] \inj \R$. Denote this polyhedral space by $\tilde{X}$. Then superforms in $\AS^{0,0}(\tilde{X})$ are just smooth functions on $[0,1]$ in the usual sense, since superforms are not required to satisfy any compatibility conditions. Also the superforms $d'x$, $d''x$, and $d'x \vedge d''x$ are nowhere vanishing superforms of positive degree.  
\end{bsp}

\begin{Prop} \label{rem:complexonX}
Let $X$ be a polyhedral space, then the differential $d''$ and the wedge product of superforms on $X$ are well defined. 
Thus, for each $p \in \mathbb{N}$ we have a  complex
\begin{align*}
0 \rightarrow \AS^{p,0}_X \overset {d''} {\rightarrow} \AS^{p,1}_X \rightarrow \cdots. 
\end{align*}
If $X$ is $n$-dimensional, then $\AS_X^{p,q} =0$ for $\max(p,q) > n$. 
The sheaves $\AS^{p,q}_X$ are  
fine, hence soft and acyclic.
\end{Prop}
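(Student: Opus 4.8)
The plan is to verify each assertion of Proposition \ref{rem:complexonX} by reducing it, via the charts, to statements already established for superforms on polyhedral subspaces of $\T^r$. First I would observe that since the sheaf $\AS^{p,q}_X$ is glued from the sheaves $\AS^{p,q}_{U_i}$, and the pullbacks of forms along the transition maps $\varphi_i\circ\varphi_j^{-1}$ commute with $d''$ and with the wedge product by Lemma \ref{lem:pullbackC}, the operators $d''$ and $\wedge$ glue to well-defined operations on $\AS^{\bullet,\bullet}_X$. The relation $d''\circ d'' = 0$ and the Leibniz rule then hold on $X$ because they hold locally on each chart (they hold on open subsets of $\R^r$, hence on open subsets of $\T^r$ by Definition \ref{def:forms}, hence on polyhedral subspaces by the remark following that definition), and these are local conditions. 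This gives the complex $0\to\AS^{p,0}_X\xrightarrow{d''}\AS^{p,1}_X\to\cdots$ for each $p$.

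Next I would treat the vanishing statement $\AS^{p,q}_X=0$ for $\max(p,q)>n$. This too is local: if $X$ is $n$-dimensional, then every chart $\varphi_i\colon U_i\to\Omega_i\subset X_i$ maps $U_i$ to an open subset of a polyhedral subspace all of whose polyhedra meeting $\Omega_i$ have dimension at most $n$. For a polyhedral subspace $X_i\subset\T^{r_i}$ of dimension $\le n$ and any point $x\in\Omega_i$ of sedentarity $I$, the multi-tangent space ${\bf F}_p(x)=\sum_{\tau\ni x}\bigwedge^p\Linear(\tau,x)$ vanishes as soon as $p>n$, since each $\Linear(\tau,x)$ has dimension at most $n$; hence by the characterization of the zero superform in terms of evaluations against ${\bf F}_p(x)$ and ${\bf F}_q(x)$ (used in the proof of the preceding lemma), $\AS^{p,q}_{\Omega_i}=0$ whenever $\max(p,q)>n$. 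Pulling back along the homeomorphisms $\varphi_i$ and using that vanishing of a sheaf is checked on a cover, $\AS^{p,q}_X=0$ for $\max(p,q)>n$.

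Finally, for fineness I would argue that $\AS^{0,0}_X$ admits partitions of unity subordinate to any open cover, and that $\AS^{p,q}_X$ is a module over $\AS^{0,0}_X$ via the wedge product, so it is fine as well; softness and acyclicity (for both global sections and global sections with compact support) then follow exactly as in the proof of Lemma \ref{lem:sheaf}, citing \cite[Chapter II, Proposition 3.5 \& Theorem 3.11]{Wells} and \cite[III, Theorem 2.7]{Iversen}. The only point requiring care is the existence of global partitions of unity on $X$: here one uses that $X$ is paracompact and second countable, takes a locally finite refinement of any open cover that is subordinate to the chart domains $U_i$, applies Lemma \ref{lem:partitionsofunity} inside each chart to produce local bump functions, and patches these together using the local finiteness. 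I expect this patching step, together with checking that the resulting functions are genuine $(0,0)$-superforms on $X$ (i.e.\ compatible across chart overlaps), to be the only real obstacle; everything else is a routine transport of the chart-level results through the atlas.
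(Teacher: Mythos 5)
Your proposal is correct and follows essentially the same route as the paper: gluing $d''$ and $\wedge$ via their compatibility with pullbacks along extended affine maps, deducing the vanishing for $\max(p,q)>n$ chart by chart, and obtaining fineness from partitions of unity (the $\AS^{0,0}_X$-module structure) together with the cited results of Wells and Iversen for softness and acyclicity. The extra detail you supply on the multi-tangent spaces and on patching local bump functions over a paracompact base is exactly what the paper leaves implicit.
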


\begin{proof}
Let $(\varphi_i\colon U_i \rightarrow \Omega_i \subset X_i)_{i \in I}$ be an atlas for $X$. 
Thus $\AS^{p,q}_X = 0$ if $\max(p,q) > \dim(X)$, since $\AS^{p,q}_{\Omega_i} = 0$.  
Since $d''$ and the wedge product are compatible with pullbacks along extended affine maps, both of these maps  are well defined on the glued sheaves $\AS^{p,q}_X$. 
Since $X$ is paracompact and $\AS^{p,q}_X$ is glued from the fine sheaves $\AS^{p,q}_{\Omega_i}$, the sheaf $\AS^{p, q}_X$ is fine as well. 
Softness and acyclicity for global sections follows from  \cite[Chapter II, Proposition 3.5 \& Theorem 3.11]{Wells} respectively and acyclicity for sections with compact support follows from  \cite[III, Theorem 2.7]{Iversen}.
\end{proof}

\begin{defn}\label{Def:DolbeautCohomology}
Let $X$ be a polyhedral space, then the  \textit{Dolbeault cohomology} of superforms is defined as $H^{p,q}_{d''}(X) := H^q(\AS^{p,\bullet}_X(X), d'')$ and  the \textit{Dolbeault cohomology of superforms  with compact support} is defined as $H^{p,q}_{d'',c}(X) := H^q(\AS^{p,\bullet}_{X,c}(X), d'')$. 
\end{defn}

\section{Comparison of cohomologies}	\label{chapter3}

In this section we show that the Dolbeault cohomology of superforms agrees with tropical cohomology on polyhedral spaces. Subsection
 \ref{sec:tropicalcohomology}  recalls the definition of tropical cohomology using singular cochains. We then  give another description of tropical cohomology in terms of sheaves \cite{MikZhar}. In Subsection 
\ref{sec:Dolbeault}, we  show that Dolbeault cohomology of superforms is also equivalent to the cohomology of certain sheaves. We then calculate  sections of these sheaves and deduce from this that the sheaves defining tropical and Dolbeault cohomologies agree.

\subsection{Tropical cohomology}\label{sec:tropicalcohomology} 

This subsection describes tropical 
cohomology from \cite{IKMZ}. 
Recall the definitions of the multi-(co)tangent spaces from Definition \ref{def:multitangentspacePoint}. We now extend this definition to faces of a polyhedral complex. 

\begin{defn}\label{def:multitangent}
Let $\CC$ be a polyhedral complex  in $ \T^r$. 
Let $\sigma \in \CC$ and  $I \subset [r]$ be such that  $\inte(\sigma)  \subset \R^r_I$. 
The  $p$-th \emph{multi-tangent} and \emph{multi-cotangent space}
 of $\CC$ at $\sigma$ are the vector subspaces 
$${\bf F}_p(\sigma) =  \sum \limits_{ \tau \in \CC_I: \sigma \prec \tau} \bigwedge^p \Linear(\tau) \subset \bigwedge^p \R^r_I \qquad \text{and} \qquad
{\bf F}^p(\sigma) = \left( \sum \limits_{ \tau \in \CC_I: \sigma \prec \tau} \bigwedge^p \Linear(\tau) \right)^*,$$
respectively. 
\end{defn}

For $\sigma \prec \tau$ 
 for every $p$ there is a  map $i_{\tau \sigma} : {\bf F}_p(\tau) \to {\bf F}_p(\sigma)$, which is an inclusion of vector spaces if $\sigma $ and $\tau$ are of the same sedentarity.  
Otherwise, if  $\sigma$ is of sedentarity  $J$  and $\tau$ of sedentarity $I$, the map $i_{\tau \sigma}$  is given by the composition of the projection  $\pi_{IJ}$ and the above inclusion.   
 On the dual spaces ${\bf F}^p(\sigma)$,  the maps are reversed $r_{\tau \sigma}: {\bf F}^p(\sigma) \to {\bf F}^p(\tau)$. 
 
 To define the maps 
 $r_{\tau \sigma}: {\bf F}^p(\sigma) \to {\bf F}^p(\tau)$ for a polyhedral space $X$ we  impose an additional condition on $X$, which we call here a \emph{face structure} \cite[Definition 1.10]{MikZhar}.

\begin{defn}\label{def:facestructure} 
Let $X$ be a polyhedral space with atlas $ ( \varphi_i: U_i \to \Omega_i \subset  X_i)_{i\in I}$. 
A \textit{face structure} on $X$, consists of fixed polyhedral structures $\CS_i$ on $X_i$ for each $i$ and
 a finite number of closed sets $\{\overline{\sigma}_k\}$, called \textit{facets}, which cover $X$, such that
\begin{enumerate}
\item   each facet $\overline{\sigma}_k$ is contained in some chart $U_i$ for some $i$ 
such that $\varphi_i(\overline{\sigma}_k)$ is the intersection of $\Omega_i$ with a facet  $\tau_{ik}$ of the polyhedral complex $\CS_i$;
\item for any collection of facets $\mathcal{S}$ and $\overline{\sigma}_l \in \mathcal{S}$  the image of the intersection $\cap_{\overline{\sigma}_k \in \mathcal{S}} \overline{\sigma}_k$ in the chart $\varphi_i: U_i \to X_i$ containing $\overline{\sigma}_l$  is the intersection of $\Omega_i$ with a face of $\tau_{il}$. 
\end{enumerate} 

Given a face structure of $X$ a \emph{face} of $X$ is an intersection of facets. 
\end{defn}

Note that every open subset 
 of the support of a polyhedral complex in $\T^r$ is a polyhedral space with a face structure. For example, one can take  
  the facets to be the intersections of maximal polyhedra of the polyhedral complex with the open subset.

Two face structures on $X$ are \emph{equivalent}
 if there exists a common refinement, i.e.~a face structure $\{\overline{\sigma}_k \}$ on $X$ such that every facet $\overline{\sigma}_k$ is contained in a facet of each of the two original face structures.

Given a face structure on $X$, for faces $\sigma \prec \tau$ of $X$ there are  canonical maps 
 $i_{\tau \sigma}$ and $r_{\tau \sigma}$  between the multi-tangent and multi-cotangent spaces respectively.  
These maps are induced by the maps between the multi-tangent and multi-cotangent spaces of the images of the faces under a chart of the polyhedral space. 

\begin{bsp}\label{Exampel2.5}
We again consider the polyhedral space given by equipping the space $X = [0,1]$ with two charts to 
 $\T^1$ as in Example \ref{ex:part2}. Notice that $[0, 1]$ cannot be the only facet of a face structure on $X$ since it is not contained in a single chart. Choose in both charts the polyhedral structure on $\T$ with facets $[-\infty, 0]$ and $[0, \infty)$. Then we can take as a face structure on $X$ consisting of the  facets $\overline{\sigma}_1 = [0, 1/2]$ and $\overline{\sigma}_2 = [1/2, 1]$.
\end{bsp}

Denote by $\Delta_q$ the standard $q$-simplex. 

\begin{Def}\label{def:trophomol}
Let $X$ be a polyhedral space together with a face structure $\CS$ on $X$.
\begin{enumerate}
	\item
	For every face $\tau \in\CS$, we write $C_q(\tau)$ for the free $\R$-vector space 
	generated by continuous maps $\delta:\Delta_q\to\tau$ such that the image of $\inte(\Delta_q)$ is contained in $\inte(\tau)$ and  in addition
 	the image of each 
	open face of $\Delta_q$ is contained in the relative interior of a face of $\tau$.
	The space of \emph{tropical} $(p,q)$-\emph{chains  on} $X$ with respect to $\CS$ is 
	\[C_{p,q}(X):=\bigoplus_{\tau\in\CS}{\bf F}_p(\tau)\otimes C_q(\tau).\]

	\item
	
	For $\delta\in C_q(\tau)$ write $\partial\delta=\sum_{k=0}^{q}(-1)^{\epsilon_k} \delta^k$ for the usual boundary map, considered as a map $C_q(\tau)\to \bigoplus_{\sigma \prec \tau} C_{q-1}(\sigma)$.
For every $\sigma  \prec \tau$ in $\CS$ we have the map of multi-tangent
	 spaces, $i_{\tau \sigma}\colon {\bf F}_p(\tau)\to{\bf F}_p(\sigma)$.
	For $v\otimes\delta\in{\bf F}_p(\tau)\otimes C_q(\tau)$ we define the \emph{boundary operator} by 
    	\begin{align*}
	\partial(v\otimes\delta):=\sum_{k=0}^q(-1)^{\epsilon_k} v^k\otimes\delta^k\in C_{p,q-1}(X),
	\end{align*}
	where $v^k:=i_{ \tau \sigma}(v)$ when $\delta^k(\Delta_{q-1}) \subset \inte(\sigma)$. 
	We obtain complexes $(C_{p,\bullet}(X),\partial)$ of real vector spaces. 
	
		\item We define the \emph{tropical homology groups} to be
	\[H_{p,q}^{\trop}(X):=H_q(C_{p,\bullet}(X), \partial).\]
	Dually, we define \emph{tropical cochains} by $C^{p,q}(X):=\Hom(C_{p,q}(X),\RM)$ and the \emph{tropical cohomology} of $X$ as the cohomology of the dual complex
	\[H_{\trop}^{p,q}(X):=H^q(C^{p,\bullet}(X), \partial^*).\]
	
	\item
	We say that $\alpha\in C^{p,q}(X)$ has \emph{compact support} if there exists a compact subset $K_{\alpha} \subset X$ such that $\alpha(v\otimes\delta)\neq 0$ implies  
	$\delta(\Delta_q)\cap K_{\alpha} \neq\emptyset.$ 
	The cochains with compact support form a complex $C_c^{p,\bullet}(X)$ and we define \emph{tropical cohomology with compact support}
	by 
	\[H_{\trop,c}^{p,q}(X):=H^q(C_c^{p,\bullet}(X),\partial^*).\]
\end{enumerate}
\end{Def}
	
 \begin{bem}
	There are also cellular versions of tropical homology and cohomology \cite[Section 2.2]{MikZhar}. The advantage of the cellular  versions  is that they the (co)homology groups of finitely generated complexes. 
\end{bem}

Let $\CS$ be a polyhedral complex in $\T^r$. From the vector spaces ${\bf F}^p(\sigma)$,  it is possible to construct a sheaf on $|\CC| \subset \T^r$
following the lines of  \cite[Section 2.3]{MikZhar}. 
 For each open set $\Omega \subset |\CC|$, consider the  poset $P(\Omega)$ whose elements are the connected components $\sigma$ of faces of $\CC$ intersecting with $\Omega$. The elements of $P(\Omega)$  are ordered by inclusion and if $\sigma \prec \tau$ recall there are maps 
 $r_{\tau \sigma}: {\bf{F}}^p(\sigma) \to {\bf{F}}^p(\tau)$.
 
 \begin{defn}

For an open set $\Omega \subset |\CC|$ define the vector space
\begin{align*}
\mathcal{F}^p(\Omega)  := \varprojlim_{\sigma \in P(\Omega)} {\bf F}^p( \sigma).
\end{align*}

\end{defn}
The above defines a constructible 
sheaf 
of vector spaces 
on $|\CC|$ \cite{MikZhar}.  These sheaves do not depend on the polyhedral structure $\CS$ and thus are well defined for polyhedral subspaces. For a polyhedral space
 $X$, the sheaves $\mathcal{F}^p_X$ are defined by gluing along charts. Note that this definition does not require a face structure on $X$.

\begin{defn} \label{defn:basicopen}
A subset $\Delta \subset \T^r$  is an \textit{open cube} if it is a product of intervals which are either $(a_i, b_i)$ or $[-\infty, c_i)$ for $a_i \in \T$, $b_i,c_i \in \R \cup \{\infty \}$. 
 
 For a  polyhedral complex $\CC$ in $\T^r$, an open  subset $\Omega$ of $|\CC|$ is called a \textit{basic open} subset  if there exists an open cube $\Delta \subset \T^r$ such that $\Omega = |\CC| \cap \Delta$ and such that the set of polyhedra of $\CS$ intersecting $\Omega$ has a unique minimal element. Note that the sedentarity of the minimal polyhedron of $\Omega$ is the maximal sedentarity among points in $\Omega$.
 
Let $X$ be a polyhedral space with atlas $(\varphi_i \colon U_i \rightarrow \Omega_i \subset X_i)_{i \in I}$, such that for each $i$ we have a fixed  polyhedral structure $\CS_i$ on $X_i$. Then we say that an open subset $U$ is a \textit{basic open} subset (with respect to these structures) if there exists a chart $\varphi_i \colon U_i \rightarrow X_i$ such that $U \subset U_i$ and $\varphi_i(U)$ is a basic open subset of $|\CS_i|$.
\end{defn}

\begin{lem} \label{prop:basicopen}
Let $\CS$ be a polyhedral complex in $\T^r$. Then the basic open sets form a basis of the topology on $|\CC|$. Furthermore, if $\Omega$ is a basic open subset of  $|\CC|$ of sedentarity $I$, then $\Omega_I$ is a basic open subset of  $|\CC_I|$ 
in $\R^r_I$.
\end{lem}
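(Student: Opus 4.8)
The plan is to establish the two assertions of Lemma~\ref{prop:basicopen} separately, both by reducing to the combinatorics of the poset of polyhedra meeting a given open cube.

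\textbf{Basic opens form a basis.} Given a point $x \in |\CC|$ and an arbitrary open neighborhood $W$ of $x$ in $|\CC|$, I want to produce a basic open $\Omega$ with $x \in \Omega \subset W$. Write $\Omega = |\CC| \cap \Delta$ for a suitable open cube $\Delta$. First, since the cubes of the form described in Definition~\ref{defn:basicopen} form a neighborhood basis of any point of $\T^r$ (products of intervals, with the convention that a coordinate equal to $-\infty$ forces an interval of type $[-\infty, c_i)$), choose a small cube $\Delta_0 \ni x$ with $|\CC| \cap \Delta_0 \subset W$. The issue is that $|\CC| \cap \Delta_0$ need not have a unique minimal polyhedron. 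To fix this, shrink $\Delta_0$ further: let $\sigma_0$ be the unique polyhedron of $\CC$ with $x \in \inte(\sigma_0)$ (this is well defined because the relative interiors of faces partition $|\CC|$). Now shrink $\Delta_0$ to a cube $\Delta$ with $x \in \Delta$ so small that $\Delta$ meets only those polyhedra $\tau$ of $\CC$ with $\sigma_0 \preceq \tau$; this is possible because there are finitely many polyhedra, and any $\tau$ not containing $\sigma_0$ has positive distance from $x$ (here one uses that a face not containing $x$ in its closure stays away from $x$, and the finitely many faces whose closure contains $x$ but which do not have $\sigma_0$ as a face — there are none, since $x \in \inte(\sigma_0)$ forces $\sigma_0 \preceq \tau$ whenever $x \in \tau$). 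Then every polyhedron meeting $\Omega = |\CC| \cap \Delta$ has $\sigma_0 \cap \Delta$ in its closure, and $\sigma_0 \cap \Delta$ itself meets $\Omega$, so the set of polyhedra meeting $\Omega$ has unique minimal element $\sigma_0$. Hence $\Omega$ is basic, and $\Omega \subset |\CC| \cap \Delta_0 \subset W$.

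\textbf{The stratum $\Omega_I$ is basic in $|\CC_I|$.} Suppose $\Omega = |\CC| \cap \Delta$ is basic of sedentarity $I$, with minimal polyhedron $\sigma_0$; by the remark in Definition~\ref{defn:basicopen}, $I = \sed(\sigma_0)$ and $\inte(\sigma_0) \subset \R^r_I$, so $\sigma_0 \in \CC_I$. Set $\Delta_I := \Delta \cap \R^r_I$; since $\Delta$ is a product of intervals, $\Delta_I$ is again an open cube in $\R^r_I \cong \R^{r-|I|}$ (the coordinates indexed by $I$ are fixed to $-\infty$, the others range over the open intervals $(a_i, b_i)$, which is exactly the shape allowed for a cube in $\R^{r-|I|}$). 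Then $\Omega_I = \Omega \cap \R^r_I = |\CC| \cap \Delta \cap \R^r_I = |\CC_I| \cap \Delta_I$, using $|\CC_I| = |\CC|_I = |\CC| \cap \R^r_I$. It remains to check that the polyhedra of $\CC_I$ meeting $\Omega_I$ have a unique minimal element. The polyhedra $\tau \in \CC_I$ meeting $\Omega_I = \Omega \cap \R^r_I$ are precisely the polyhedra $\tau \in \CC$ meeting $\Omega$ with $\inte(\tau) \subset \R^r_I$; all of these contain $\sigma_0$ as a face (minimality in $\Omega$), and $\sigma_0 \in \CC_I$ meets $\Omega_I$, so $\sigma_0$ is the unique minimal element. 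Therefore $\Omega_I$ is a basic open subset of $|\CC_I|$.

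\textbf{Main obstacle.} The routine part is the manipulation of cubes and the identity $|\CC_I| = |\CC| \cap \R^r_I$. The one point requiring care is the shrinking argument in the first part: one must argue that a sufficiently small cube around $x$ avoids all polyhedra not having $\sigma_0$ as a face, and simultaneously that the minimal polyhedron of the resulting $\Omega$ is genuinely $\sigma_0$ (rather than some proper face of $\sigma_0$ that happens still to meet $\Delta$) — this is where $x \in \inte(\sigma_0)$ is essential, since it guarantees $\sigma_0$ itself meets every neighborhood of $x$ while every proper face of $\sigma_0$ has $x$ outside its closure and so is avoided by a small enough cube.
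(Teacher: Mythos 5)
Your proof is correct and follows essentially the same route as the paper, which disposes of both claims in two sentences: basic opens form a basis because open cubes do, and $\Omega_I = |\CC_I| \cap \Delta_I$ has the same minimal polyhedron as $\Omega$. The only difference is that you spell out the shrinking argument guaranteeing the unique minimal element (a cube around $x \in \inte(\sigma_0)$ small enough to avoid the finitely many polyhedra not containing $x$ meets only polyhedra having $\sigma_0$ as a face), a detail the paper leaves implicit.
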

\begin{proof}
Basic open sets form a basis of the topology of $|\CC|$ since open cubes form a basis of the topology of $\T^r$. For the second statement,  we have that $\Omega_I = |\CS_I| \cap \Delta_I$ and the minimal polyhedron of $\Omega_I$ is the same as the one of $\Omega$, so the lemma is proven. 
\end{proof}

\begin{lem} \label{lem:Fp}
Let $\CS$ be a polyhedral complex in $\T^r$ and $\Omega$ a basic open subset of $|\CS|$. Then 
\begin{align*}
\FS^p(\Omega) = \mathbf{F}^p(\sigma),
\end{align*}
where $\sigma$ is the minimal polyhedron of $\CC$. 
\end{lem}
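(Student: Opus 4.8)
The plan is to compute the stalks of the sheaf $\FS^p$ and use the fact that a basic open set deformation retracts onto its minimal polyhedron in a way compatible with the poset structure. First I would recall that, by definition, $\FS^p(\Omega) = \varprojlim_{\sigma \in P(\Omega)} \mathbf{F}^p(\sigma)$, where $P(\Omega)$ is the poset of connected components of faces of $\CS$ meeting $\Omega$, ordered by inclusion, with transition maps $r_{\tau\sigma} \colon \mathbf{F}^p(\sigma) \to \mathbf{F}^p(\tau)$ for $\sigma \prec \tau$. The key observation is that, since $\Omega$ is a basic open subset, the set of polyhedra of $\CS$ meeting $\Omega$ has a unique minimal element, which I will call $\sigma_0$; moreover every face of $\CS$ meeting $\Omega$ contains (a face-neighborhood of a point of) $\sigma_0$, so that $\sigma_0$ is the unique minimal element of the poset $P(\Omega)$ as well. (Here one should be slightly careful: the elements of $P(\Omega)$ are connected components of $\tau \cap \Omega$; since $\Omega = |\CC| \cap \Delta$ for an open cube $\Delta$, and $\Delta$ is convex, each such intersection is in fact connected, so $P(\Omega)$ is identified with the set of faces of $\CS$ meeting $\Omega$.)

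Next I would invoke the universal property of the inverse limit. Since $\sigma_0$ is the minimal element of $P(\Omega)$, for every $\tau \in P(\Omega)$ we have $\sigma_0 \prec \tau$ and hence a map $r_{\tau\sigma_0} \colon \mathbf{F}^p(\sigma_0) \to \mathbf{F}^p(\tau)$. I claim these maps are compatible with the transition maps of the inverse system: for $\sigma_0 \prec \sigma \prec \tau$ one has $r_{\tau\sigma} \circ r_{\sigma\sigma_0} = r_{\tau\sigma_0}$, which is exactly the functoriality of the maps $r_{\tau\sigma}$ recorded after Definition \ref{def:multitangent} (these are duals of the maps $i_{\tau\sigma}$, which are compositions of inclusions and coordinate projections $\pi_{IJ}$, hence compose correctly). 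Therefore the family $(r_{\tau\sigma_0})_{\tau}$ defines a cone over the inverse system, inducing a canonical map $\mathbf{F}^p(\sigma_0) \to \FS^p(\Omega)$. Since $\sigma_0$ is initial in $P(\Omega)$, this is an isomorphism: given a compatible family $(v_\tau)_\tau \in \varprojlim \mathbf{F}^p(\tau)$, the element $v_{\sigma_0}$ determines all the others by $v_\tau = r_{\tau\sigma_0}(v_{\sigma_0})$, giving the inverse. This is the standard fact that a limit over a poset with an initial object is just evaluation at that object.

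The one genuine subtlety — and the step I expect to require the most care — is justifying that $\sigma_0$, the minimal polyhedron of the polyhedral complex meeting $\Omega$, is genuinely the minimal element of the poset $P(\Omega)$ of connected components, including when $\Omega$ has nonempty sedentarity $I$. One must check that $\sigma_0 \cap \Omega$ is itself connected (so it is a single element of $P(\Omega)$, not several), that $\sigma_0 \cap \Omega$ is nonempty, and that for every other face $\tau$ meeting $\Omega$ the face relation $\sigma_0 \prec \tau$ holds in the sense used to define $r_{\tau\sigma_0}$ — in particular when $\sigma_0$ and $\tau$ have different sedentarities, so that $r_{\tau\sigma_0}$ factors through a projection $\pi$. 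All of this follows from the definition of a basic open set together with convexity of the cube $\Delta$: $\sigma_0 \cap \Delta$ is an intersection of a polyhedron with a convex set, hence connected, and by Lemma \ref{prop:basicopen} the sedentarity of $\sigma_0$ is the maximal sedentarity among points of $\Omega$, which guarantees $\sigma_0$ is a face of every $\tau$ meeting $\Omega$ in the appropriate sense. Once this poset-theoretic point is settled, the computation of the inverse limit is immediate, and the lemma follows.
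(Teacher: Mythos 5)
Your proof is correct and takes essentially the same route as the paper's: both identify $\sigma \cap \Omega$ as the unique minimal element of the poset $P(\Omega)$ (using convexity of the open cube $\Delta$ to get connectedness of the relevant intersections) and then observe that the inverse limit over a poset with an initial object is just evaluation at that object. You spell out more of the bookkeeping (the universal property of the limit, compatibility of the maps $r_{\tau\sigma_0}$, and the sedentarity issues) than the paper's very terse three-sentence proof, but the underlying argument is identical.
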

\begin{proof}
Let $\Delta$ be an open cube such that $\Omega = \Delta \cap |\CS|$ and suppose that $I$ is such that $\inte(\sigma) \subset \R^r_I$. Then $\Omega \cap \sigma = \Delta \cap \sigma = (\Delta \cap \R^r_I) \cap \inte(\sigma)$ is connected, since it is the intersection of two convex sets. Thus the poset $P(\Omega)$ has $\Omega \cap \sigma$ as its unique minimal element and the lemma follows. 
\end{proof}

\begin{bsp}
Recall the definition of tropical projective space $\T P^r$ from Example \ref{ex:tropicalprojectivespace}.
 The sets $U_i$ are identified with $\T^r$ via the charts  $\varphi_i$. 
Since $\T^r$ is a basic open with minimal stratum $\sigma_\infty = \{(-\infty,\dots,-\infty)\}$, we
have $\FS^p(U_i) = \FS^p(\T^r) = { \bf F}^p(\sigma_\infty)$.
By definition we have that  ${\bf F}^p(\sigma_\infty) = 0$ for $p > 0$ and ${\bf F}^0(\sigma_\infty) = \R$. 
Therefore, $\FS^p(\T P^r) = 0$ for $p > 0$ and $\FS^0(\T P^r) = \R$.

Recall the  tropical line $L$ from Example \ref{ex:tropicalline1}. The entire line $L$ satisfies the conditions
to be a basic open subset. Its minimal polyhedron is the vertex, which we  denote by $\sigma$. 
By Lemma \ref{lem:Fp}, we have  $\FS^p(L) = {\bf F}^p(\sigma) $. 
Therefore, 
$$\FS^0(L) = \R \quad \text{and} \quad \FS^1(L) =  \langle -e_1, -e_2, e_1 + e_2 \rangle = \R^2.$$
An open  edge $\tau$ of $L$  is also a basic open
subset. 
If $v$ is the direction of $\tau$, then   $\FS^p(\tau) = \R$ for $p = 0$, $\FS^p(\tau) = \langle v \rangle$  for $p = 1$,  and $\FS^p(\tau) = 0$ otherwise. 

\end{bsp}

Next we compute $H^{p,q}_{\trop}(\Omega)$ for a basic open set $\Omega$.

\begin{Prop}\label{prop:acyclic}
Let $\Omega$ be a basic open subset of a polyhedral subspace $|\CS| \subset \T^r$, for a polyhedral complex $\CS$ in $\T^r$. 
Then 
\[H^{p, q}_{\trop}(\Omega) = 0\]
for $q>0$. Furthermore, we have canonical isomorphisms
\[H^{p,0}_{\trop}(\Omega) =  \mathbf{F}^p(\sigma), \]
where $\sigma$ is the minimal polyhedron of $\Omega$. 
\end{Prop}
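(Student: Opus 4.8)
The plan is to show that the chain complex $C_{p,\bullet}(\Omega)$ is a resolution of $\mathbf{F}^p(\sigma)$ in degree zero, where $\sigma$ is the minimal polyhedron of $\Omega$. First I would reduce to a combinatorial statement. Since $\Omega$ is a basic open set, every face $\tau$ of $\CS$ meeting $\Omega$ satisfies $\sigma \prec \tau$; fix a point $x$ in the relative interior of $\sigma\cap\Omega$. The key structural observation is that $\Omega$ deformation retracts onto the ``star of $\sigma$ near $x$'': because $\Omega$ is cut out by an open cube and each face is convex, one can build a (piecewise-linear) contraction of $\Omega$ onto a small neighborhood of $x$ in $\Omega$ that preserves the face structure, i.e.\ sends each face into itself. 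I would use such a contraction to produce a chain homotopy on $C_{p,\bullet}(\Omega)$.

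The main step is to construct, for each fixed $p$, an explicit chain homotopy $h\colon C_{p,q}(\Omega)\to C_{p,q+1}(\Omega)$ witnessing that the identity is homotopic to the projection onto the degree-zero part generated by the cone point $x$. Concretely, given a singular simplex $\delta\colon\Delta_q\to\tau$ whose open faces land in relative interiors of faces of $\tau$, the cone $x * \delta$ is a simplex $\Delta_{q+1}\to\tau$ with the same property, since each face of $\tau$ containing $x$ (in particular $\sigma$) is convex and $x\in\inte(\sigma)\subset\inte(\tau)$ is in the closure of every relevant face. Because the coefficient spaces transform via the maps $i_{\tau\sigma'}$, I must check that coning is compatible with these maps: the point is that $i_{\tau\sigma}\circ i_{\sigma\sigma}=i_{\tau\sigma}$ and more generally the $i$'s form a compatible system indexed by the poset of faces through $x$, so that tensoring $v\otimes\delta\mapsto v\otimes(x*\delta)$ (with the appropriate sign and with $v$ pushed forward along $i$ whenever the cone meets a smaller face) gives a well-defined map $C_{p,q}(\Omega)\to C_{p,q+1}(\Omega)$. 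A standard computation with the simplicial cone identity $\partial(x*\delta)=\delta - x*(\partial\delta)$ then yields $\partial h + h\partial = \id - \pi$ in positive degrees, where $\pi$ is the augmentation onto the degree-zero piece; this gives $H^{\trop}_{p,q}(\Omega)=0$ for $q>0$ and identifies $H^{\trop}_{p,0}(\Omega)$.

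For the degree-zero computation, $C_{p,0}(\Omega)=\bigoplus_{\tau}{\bf F}_p(\tau)\otimes C_0(\tau)$ modulo the image of $\partial$ from degree one. A $0$-chain is a point in some face $\tau$ with a coefficient in ${\bf F}_p(\tau)$; the boundary relations coming from paths inside $\Omega$ identify all such classes, via the maps $i_{\tau\sigma}$, with elements of ${\bf F}_p(\sigma)$ for the minimal face $\sigma$ (using that $\sigma \prec \tau$ for all $\tau$ meeting $\Omega$ and that $\Omega\cap\sigma$ is connected, by the argument already used in Lemma~\ref{lem:Fp}). Hence $H^{\trop}_{p,0}(\Omega)\cong {\bf F}_p(\sigma)$, and dualizing gives $H^{p,0}_{\trop}(\Omega)\cong {\bf F}^p(\sigma)$, with the isomorphism canonical because it is induced by restriction to the minimal face, independently of all choices. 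Finally I would note that the compact-support variant is not asserted here, so nothing further is needed.

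\textbf{Expected main obstacle.}
The delicate point is not the topology of the contraction but the bookkeeping of the coefficient maps $i_{\tau\sigma}$ under coning when $\sigma$ and $\tau$ have different sedentarities, since then $i_{\tau\sigma}$ involves a projection $\pi_{IJ}$ rather than an inclusion. I expect to spend most of the effort verifying that the cone operator $h$ is genuinely a chain map into $C_{p,\bullet}(\Omega)$ (landing in the correct summands with coefficients transported correctly) and that the sign conventions $\epsilon_k$ from Definition~\ref{def:trophomol} combine to give the cone identity; once that is in place, the vanishing and the identification of $H^{p,0}$ are formal.
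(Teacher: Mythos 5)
Your overall strategy (contract $\Omega$ to a point $x$ in the relative interior of the minimal face, transport coefficients along a face-preserving homotopy, and read off degree zero) is the same as the paper's, and in the case $\sed(\sigma)=\emptyset$ your cone operator works and is essentially equivalent to the paper's use of the prism operator for the linear homotopy $f_t(y)=(1-t)y$. The degree-zero identification and the dualization step are also fine.

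However, there is a genuine gap in the case $\sed(\sigma)=I\neq\emptyset$, which the proposition must cover (basic opens form a basis of the topology of $|\CS|\subset\T^r$, so neighborhoods of points at infinity are essential). By Definition \ref{defn:basicopen} the minimal face has the \emph{maximal} sedentarity among points of $\Omega$, so the faces $\tau$ carrying chains generically have $\sed(\tau)=J\subsetneq I$. The join $x*\delta$ with apex $x\in\R^r_I$ over a simplex landing in $\inte(\tau)\subset\R^r_J$ is then not a continuous map to $\T^r$: for a coordinate $i\in I\setminus J$ one has $x_i=-\infty$ and $\delta(z)_i$ finite, so the linear interpolation $(1-t)x_i+t\,\delta(z)_i$ equals $-\infty$ for every $t<1$ and jumps to $\delta(z)_i$ at $t=1$. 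So your homotopy $h$ is simply undefined on most of $C_{p,\bullet}(\Omega)$; the obstacle is not the bookkeeping of the maps $i_{\tau\sigma}$ (which indeed involve the projections $\pi_{IJ}$ and would be manageable once a face-preserving homotopy exists), but the non-existence of the cone as a singular simplex. The paper circumvents this with a two-stage argument: Lemma \ref{lem:retractionSed} builds a continuous, face-preserving deformation retraction $g(y,t)=y-\log(1-t)\,w(y)$ of $\Omega$ onto the deepest stratum $\Omega_I$ (continuous because it pushes coordinates \emph{towards} $-\infty$, never away from it), and only afterwards applies the linear contraction to $x_0$ inside $\R^r_I$; each stage is turned into a cochain homotopy via the prism operator $P_f^\vee$. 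To repair your proof you would need to replace the literal cone by such a reparametrized contraction (or first retract onto $\Omega_I$ and cone there), at which point you are reproducing the paper's argument.
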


\begin{proof}

Suppose first that the minimal polyhedron of $\Omega$ is of sedentarity $\emptyset$ and denote it by $\sigma$. 
Choose a point $x_0 \in \Omega$ which is in the relative interior of $\sigma$. 
By performing a translation of $\Omega$ we may assume that $x_0 = 0$. 

Since the complex  $C_\bullet^{\sing}(x_0, {\bf F}^p(\sigma))$ is a subcomplex of $C_{p,\bullet}^{\trop}(\Omega)$  there is a  canonical 
projection $\pi \colon C^{p,\bullet}_{\trop}(\Omega) \to C^\bullet_{\sing}(x_0, {\bf F}^p(\sigma))$. 
There is also a map 
$\iota \colon C^{\bullet}_{\sing}(x_0, {\bf F}^p(\sigma)) \to C^{p,\bullet}_{\trop}(\Omega)$
that is dual to the map which pushes forward the simplicies to $x_0$ and preserves the coefficients.

Define $f \colon \Omega \times [0,1] \to \Omega$ by $f(x,t) = (1-t)x$, and let  $f_t := f(\cdot, t)$. 
Notice that $f_1$ is the contraction of $\Omega$  to the origin. 
For all $t$ there is a map $f_t^* \colon C^{p, \bullet}_{\trop}(\Omega) \to  C^{p,\bullet}_{\trop}(\Omega)$ 
which is dual to the map which pushes forward the simplicies along $f_t$ and preserves the coefficients.
This is possible since $f$ preserves the polyhedral structure of $\Omega$. 
Notice that for $t = 1$,  we have $f_1^* = \iota \circ \pi \colon  C^{p,\bullet}_{\trop}(\Omega) \to  C^{p,\bullet}_{\trop}(\Omega)$. 
It is clear that $\pi \circ \iota = \id$. 
We claim that $\iota \circ \pi$ is homotopic to the identity.

Attached to $f$ there is a prism operator $P_f \colon C^{\sing}_{q-1}(\Omega) \to C^{\sing}_{q}(\Omega)$, which
provides a homotopy between $f_{0, *} = \id$ and $f_{1, *}$. 
Using the prism operator we can construct a map 
$P^{\vee}_f \colon C^{p,q}_{\trop}(\Omega) \to C^{p,q-1}_{\trop}(\Omega)$  on the tropical cochains groups  given by $(P^{\vee}_f)(\alpha)(v \otimes \delta) = \alpha(v \otimes P_f(\delta))$. 
It can be checked by following the argument for the case of constant coefficients 
that $P^{\vee}_f$ provides a homotopy between $\id$ and $f^*_1 = \iota \circ \pi$. 
Therefore, $H^{p, q}_{\trop}(\Omega) \cong H^q_{\sing}(x_0, {\bf{F}}_p(\sigma))$
and the statement of the proposition follows.

When the minimal polyhedron of $\Omega$ is of sedentarity $I \neq \emptyset$ Lemma \ref{lem:retractionSed}  
constructs a deformation retraction of $\Omega$ onto 
$\Omega_I$
 which preserves the underlying  polyhedral structure. 
Using this retraction we can apply the argument above and obtain the canonical 
isomorphism in the claim. 
This proves the proposition.
\end{proof}

\begin{lem}\label{lem:retractionSed} 
Let $\CS$ be a polyhedral complex and $\Omega$ a basic open subset with maximal sedentarity $I$. 
Then there exists a continuous map
\begin{align*}
g\colon \Omega \times [0,1] \to \Omega 
\end{align*}
such that 
\begin{enumerate}
\item 
$g(x, 0) = x$ for all $x \in \Omega$;
\item
$g(x, t) = x$ for all $x \in \Omega_I$ and all $t \in [0,1]$; 
\item
$g(x,1) \in \Omega_I$ for all $x \in \Omega$; 
\item
For $\sigma \in \CS$ and $x \in \inte(\sigma)$ we have $g(x, t) \in \inte (\sigma)$ for all $t \in [0,1)$. 
\end{enumerate}
\end{lem}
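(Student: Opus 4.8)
The plan is to take as retraction the map $\pi\colon\Omega\to\Omega_I$ that sends every coordinate indexed by $I$ to $-\infty$ (and fixes the others), and to build the homotopy by translating each stratum of $\Omega$ along a recession direction of the face containing it. Write $\Omega=|\CS|\cap\Delta$ and let $\sigma$ be the minimal polyhedron of $\Omega$, so $\sed(\sigma)=I$ and every polyhedron of $\CS$ meeting $\Omega$ has $\sigma$ as a face; in particular every point of $\Omega$ has sedentarity contained in $I$, and since $\sigma\subset\Delta$ we have $\Delta_i=[-\infty,c_i)$ for all $i\in I$, whence $\pi(\Delta)\subset\Delta$. The geometric core is: for each $\rho\in\CS$ meeting $\Omega$, writing $J=\sed(\rho)$ and $\rho_J=\rho\cap\R^r_J$ for the finite part of $\rho$ (a polyhedron in $\R^r_J\cong\R^{r-|J|}$ with $\relint(\rho_J)=\inte(\rho)$ and closure $\rho$ in $\T^r$), the recession cone $\operatorname{rec}(\rho_J)$ contains a vector $d_\rho$ whose $i$-th component is negative for each $i\in I\setminus J$ and zero for each $i\in[r]\setminus I$. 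Indeed $\sigma\prec\rho$ and $\sed(\sigma)=I\supseteq J$ force the stratum $\rho\cap\R^r_I$ to be non-empty, and this is equivalent to the existence of such a $d_\rho$ by an elementary argument on recession cones: if no such vector existed, then (using that a convex cone contained in a finite union of hyperplanes lies in one of them) one finds a coordinate $i_0\in I\setminus J$ along which $\rho_J$ is bounded below after constraining the remaining coordinates to a box, contradicting non-emptiness of that stratum. Using $\relint(Q)+\operatorname{rec}(Q)\subseteq\relint(Q)$, for $x\in\inte(\rho)$ the ray $x+\R_{\ge0}d_\rho$ stays in $\inte(\rho)$ and converges to $\pi(x)$ in $\T^r$, so $\pi(x)\in\rho\cap\R^r_I$; together with $\pi(x)\in\Delta$ this gives $\pi(\Omega)\subset\Omega_I$, and $\pi$ evidently fixes $\Omega_I$ pointwise, so $\pi$ is a continuous retraction of $\Omega$ onto $\Omega_I$.

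To upgrade $\pi$ to the required deformation retraction, I would choose the directions $d_\rho$ coherently and patch them into a continuous vector field $V$ on $\Omega$ that is tangent to every stratum, satisfies $V(x)\in\operatorname{rec}(\rho_{\sed(\rho)})$ whenever $x\in\inte(\rho)$, has negative $i$-th component at every point whose sedentarity omits $i$ (for $i\in I$), and has vanishing components in the coordinates indexed by $[r]\setminus I$. Such a $V$ exists by the usual patching of vector fields — for instance by induction on the dimension of faces, extending the field from the boundary faces and the faces at infinity of each $\rho$ to $\inte(\rho)$ (possible since $\operatorname{rec}(\rho'_{\sed(\rho')})$ is carried into $\operatorname{rec}(\rho_{\sed(\rho)})$ for $\rho'\prec\rho$), or via a partition of unity subordinate to the open stars of the faces. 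Set $g(x,t)=x+\tfrac{t}{1-t}V(x)$ for $t\in[0,1)$ and $g(x,1)=\pi(x)$. Then i) is immediate; ii) holds because $V\equiv0$ on $\Omega_I$ (points there have sedentarity $I$); iii) is the last sentence of the first paragraph; and iv) follows from $\relint(Q)+\operatorname{rec}(Q)\subseteq\relint(Q)$ applied to the finite part of each face. The map $g$ takes values in $\Omega$, since the coordinates indexed by $I$ only decrease and so remain in $[-\infty,c_i)$, while membership in $|\CS|$ is preserved because at each point $V$ is a recession vector of the face through that point. Finally $g$ is continuous: this is clear for $t<1$, and at $t=1$ it holds because, for every $i\in I$, the $i$-th coordinate of $g(x,t)$ decreases to $-\infty$ as $t\to1$, uniformly on compact subsets of $\Omega$.

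I expect the main obstacle to be the coherent construction of the recession directions $d_\rho$ — equivalently of the vector field $V$ — so that the glued homotopy $g$ is jointly continuous across strata of different sedentarity: no single vector can serve as a recession direction of all faces of $\CS$ meeting $\Omega$ at once, so a naive straight-line homotopy does not work, and the faces of different sedentarities genuinely have to be matched up. By comparison, the recession-cone input in the first paragraph and the verification of i)--iv) are routine.
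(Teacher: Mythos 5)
Your strategy is the same as the paper's: there the homotopy is also a reparametrized ray, $g(x,t)=x-\log(1-t)\cdot w(x)$, along a continuous vector field $w$ built face by face (by induction on the dimension of the faces, interpolating inside the convex sets $\cone(\sigma)\cap S_{I\setminus\sed(\sigma)}$), with $w_i(x)<0$ exactly for $i\in I\setminus\sed(x)$ and $w_i(x)=0$ otherwise. Your first paragraph, which extracts one admissible recession direction $d_\rho$ per face from the non-emptiness of $\rho\cap\R^r_I$, is a correct elaboration of a point the paper leaves implicit, and you rightly identify the coherent patching of these directions as the crux.

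There is, however, a concrete problem with the patching step as you specify it. The containment you invoke, that $\operatorname{rec}(\rho'_{\sed(\rho')})$ is carried into $\operatorname{rec}(\rho_{\sed(\rho)})$ for $\rho'\prec\rho$, holds when $\rho'$ and $\rho$ have the same sedentarity but fails when $\rho'$ is a face at infinity of $\rho$; worse, a continuous $V$ with all the properties you list need not exist. Take $\rho\subset\T^2$ to be the closure of $\rho_\emptyset=\{x_1\le x_2\le 0\}$ and $\Omega=\rho\cap\bigl([-\infty,-1)\times[-\infty,-1)\bigr)$, a basic open set with $I=\{1,2\}$ and minimal face $\{(-\infty,-\infty)\}$. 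The face at infinity $\rho'=\{-\infty\}\times[-\infty,0]$ has finite part $(-\infty,0]$ in the $x_2$-coordinate, so $\operatorname{rec}(\rho'_{\{1\}})$ is spanned by $-e_2$, which is not in $\operatorname{rec}(\rho_\emptyset)=\{u_1\le u_2\le 0\}$. Your conditions force $V=(0,b)$ with $b<0$ on $\inte(\rho')$ and $V_1\le V_2<0$ on $\inte(\rho)$; approaching $\inte(\rho')$ from $\inte(\rho)$ then gives $V_1\to 0$, hence $V_2\to 0$ by $V_1\le V_2$, contradicting $b<0$. (The paper's normalization $w(x)\in S_{I\setminus\sed(x)}$, which forces $w_i(x)=0$ for $i\in\sed(x)$, runs into the same example, so its interpolation step needs the same adjustment.) The repair is to drop every constraint on the components of $V(x)$ indexed by $\sed(x)$: those coordinates of $x$ are already $-\infty$ and absorb any finite translation, so only the components of $V$ transverse to the sedentarity need to lie in the recession cone of the finite part of the face and to vary continuously; in the example $V\equiv(-1,-1)$ then works, and in general the components indexed by $\sed(\rho')\setminus\sed(\rho)$ become free parameters when extending from $\rho'$ into $\rho$, which is what makes the inductive extension go through. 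With that modification your argument (and the verification of i)--iv), including continuity at $t=1$) is sound.
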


\begin{proof}
We will define a 
deformation retraction 
\begin{align*}
g(x,t) = x - \log(1- t) \cdot w(x)
\end{align*}
where 
 $w\colon \Omega \to \R^r$ is a continuous map. Notice that then property $i)$ from the statement of the lemma is satisfied. 

The map $w$ will be constructed so that for $x \in \inte(\sigma)$ and of sedentarity $J$ we have 
\begin{enumerate}
\item[2)]$w(x) = 0$ if $x \in \Omega_I$;
\item[3)] $w(x) \in S_{I \setminus J} := \{ y \in \R^r  \ \vert \ y_i < 0 \text{ if } i \in I \setminus J \text{ and } y_i = 0 \text{ else} \};$ 
        \item[4)] $w(x) \in \cone(\sigma) :=  \{ v \in \R^r \ \vert \  v_i = 0 \text{ for } i \in J \text{ and } y + n \cdot v \in \sigma \text{ for all } n \in \mathbb{N} \text{ and all } y \in \sigma\}$.
\end{enumerate}

Then property $2)$ of $w$ implies property $ii)$ in the statement of the lemma.  Also property $4)$  implies $iv)$. 
Property $3)$ implies that $g(x, 1)$ is of sedentarity $I$ for all $x \in \Omega$, and that for all $t$ the image of $g(x,t)$ is in a cube $\Delta$ defining $\Omega$ from Definition \ref{defn:basicopen}. This combined with $iv)$, shows that $g(x, 1) \in \Omega_I$. This proves property  $iii)$.

The map $w$ is constructed  inductively. 
If $x$ is of  sedentarity $I$, then we define $w(x) = 0$.
Given a polyhedron $\sigma \in \CS$ of dimension $k$, suppose 
we have already constructed $w$ on  the support of the $(k-1)$-skeleton of $\CS$ intersected with $\Omega$ .   
In particular, the map  $w$  is constructed on the boundary  $\partial \sigma$ intersected with $\Omega$. 
Properties $3)$ and $4)$ imply that the image of $\partial \sigma \cap \Omega$ under $w$ is contained in 
$ \partial ( \cone( \sigma) \cap S_{I \setminus \sed(\sigma) } ).$
Now since $ \cone( \sigma) \cap S_{I \setminus \sed(\sigma)}$ is 
convex, we can interpolate between the  values of $w$ on the boundary of $\sigma$ and define $w$ on all of $\sigma$.
This completes the proof of the lemma. 
\end{proof}

Next we relate the cohomology of the sheaf $\FS^p$ to tropical cohomology. Our arguments follow those presented in detail in  \cite[p.~110-113]{Ramanan} for  the case of the constant sheaf. 

\begin{defn}
Let $X$ be a polyhedral space which has a face structure. 
Then we define the sheaf $\underline{C}^{p,q}$ on $X$ to be the sheaf associated with
\begin{align*}
\Omega \mapsto C^{p,q}(\Omega).
\end{align*}
\end{defn}

\begin{lem} \label{Cpqsheaf}
The sheaves $\underline{C}^{p,q}$ are flasque. 
Furthermore, the canonical maps $(C^{p,q}(\Omega), \del) \rightarrow (\underline{C}^{p,q}(\Omega), \del)$ and 
$(C^{p,q}_c(\Omega), \del) \rightarrow (\underline{C}^{p,q}_c(\Omega), \del)$ are quasi-isomorphisms.
\end{lem}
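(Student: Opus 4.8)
The plan is to prove the two assertions of Lemma~\ref{Cpqsheaf} in sequence, following the model of the analogous statement for the constant sheaf in \cite[p.~110-113]{Ramanan}, but keeping track of the coefficient systems $\mathbf F_p(\tau)$.

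First I would establish flasqueness of $\underline{C}^{p,q}$. The sheaf $\underline C^{p,q}$ is the sheafification of $\Omega \mapsto C^{p,q}(\Omega) = \Hom(C_{p,q}(\Omega),\R)$, where $C_{p,q}(\Omega) = \bigoplus_{\tau} \mathbf F_p(\tau)\otimes C_q(\tau)$ with $\tau$ ranging over faces of the face structure meeting $\Omega$ and $C_q(\tau)$ the singular $q$-chains adapted to $\tau$. The key point is that for an inclusion of open sets $V \subset U$ the restriction $C_{p,q}(V) \to C_{p,q}(U)$ is \emph{injective} (a simplex adapted to the face structure on $V$ is adapted on $U$, and the coefficient spaces $\mathbf F_p(\tau)$ are attached to faces, not to the open set), so on presheaf level $C^{p,q}(U) \to C^{p,q}(V)$ is surjective: one extends a functional by zero on the complementary simplices. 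Passing to the associated sheaf, a section over $V$ of $\underline C^{p,q}$ is locally a restriction of presheaf sections, and using a cover one patches these (the obstruction vanishes because the presheaf restrictions are surjective and one can choose extensions compatibly on a locally finite refinement, or more simply invoke that the sheafification of a presheaf with surjective restriction maps and with the property that sections glue is flasque). I would spell this out in the standard way: $\underline C^{p,q}(X) \to \underline C^{p,q}(U)$ is surjective because any local section of the target is, locally, in the image of a presheaf section, and these presheaf sections over the members of an open cover of $U$ can be chosen and then extended by zero outside and summed (using that $C^{p,q}$ has surjective restrictions).

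Next, for the quasi-isomorphism statement, I would argue that the cone of $C^{p,\bullet}(\Omega) \to \underline C^{p,\bullet}(\Omega)$ is acyclic, and likewise with compact supports. Here the crucial input is Proposition~\ref{prop:acyclic}: basic open sets have vanishing higher tropical cohomology and $H^{p,0}_{\trop}(\Omega) = \mathbf F^p(\sigma_{\min})$. Since basic open sets form a basis of the topology closed under finite intersection (Lemma~\ref{prop:basicopen}, together with the fact that an intersection of basic opens is again basic, which I would check), one has that the presheaf cohomology $\Omega \mapsto H^q(C^{p,\bullet}(\Omega),\partial^*)$ agrees with the sheaf $\FS^p$ on this basis for $q=0$ and vanishes for $q>0$. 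A standard Čech-to-derived-functor / acyclic-basis argument then shows that $(\underline C^{p,\bullet},\partial)$ is a resolution of $\FS^p$ by flasque sheaves, and that the map of complexes of presheaves $C^{p,\bullet}(\Omega) \to \underline C^{p,\bullet}(\Omega)$ induces an isomorphism on cohomology for $\Omega = X$ (and, via the compactly supported analogue of the same basis argument together with the flasque, hence $\Gamma_c$-acyclic, property, also with compact supports). For the compact support version I would additionally use that flasque sheaves are $\Gamma_c$-acyclic (e.g.\ \cite[III]{Iversen}) and that $C^{p,\bullet}_c$ is the compactly supported subcomplex, so the same resolution computes $H^{p,q}_{\trop,c}$.

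The main obstacle I anticipate is not any single hard estimate but the bookkeeping needed to run the acyclic-basis argument with nonconstant coefficients: one must verify carefully that the assignment $\tau \mapsto \mathbf F_p(\tau)$ together with the maps $i_{\tau\sigma}$ is genuinely functorial under passing to finer face structures and under the inclusions among basic opens, so that Proposition~\ref{prop:acyclic} can be fed into a Čech computation; and one must check that intersections of basic opens are basic (or pass to a cofinal system for which this holds) so that the Čech complex on such a cover is itself computed by the presheaves $C^{p,\bullet}$. Once these compatibilities are in place, the rest is the classical argument of \cite{Ramanan} transported verbatim to this setting.
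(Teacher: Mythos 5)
Your argument for flasqueness matches the paper's: the presheaf $\Omega\mapsto C^{p,q}(\Omega)$ has surjective restriction maps (extend functionals by zero) and satisfies the gluing axiom (define $c(v\otimes\delta):=c_i(v\otimes\delta)$ whenever $\delta(\Delta_q)\subset U_i$ and $0$ otherwise), so $C^{p,q}(\Omega)\to\underline{C}^{p,q}(\Omega)$ is surjective for every $\Omega$ and flasqueness of $\underline{C}^{p,q}$ follows. That part is fine.

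The quasi-isomorphism part has a genuine gap. You propose to deduce it from Proposition \ref{prop:acyclic} (acyclicity on basic opens) via a ``standard \v{C}ech-to-derived-functor / acyclic-basis argument.'' But $C^{p,q}$ is not a separated presheaf: a cochain can restrict to zero on every member of an open cover without being zero (it may be nonzero only on ``large'' simplices not contained in any member of the cover). For such presheaves, knowing the cohomology of $C^{p,\bullet}(\Omega)$ on a basis does \emph{not} formally determine $H^q(C^{p,\bullet}(X))$, nor compare it to $H^q(\underline{C}^{p,\bullet}(X))$; this is exactly the reason singular cohomology can differ from sheaf cohomology of the constant sheaf on badly behaved spaces even though both are ``locally trivial.'' The mechanism that localizes singular cochains is the barycentric subdivision operator: one shows that restriction to the subcomplex $C^{p,\bullet}(\Omega)^{\mathcal{U}}$ of cochains on $\mathcal{U}$-small simplices is a quasi-isomorphism (with homotopy inverse built from the subdivision operator $b$, which acts on coefficients by $b(\alpha)(v\otimes\delta)=\alpha(v\otimes\tilde b(\delta))$ and so is compatible with the systems $\mathbf{F}_p$), that $C^{p,\bullet}(\Omega)\to\underline{C}^{p,\bullet}(\Omega)$ factors through every $C^{p,\bullet}(\Omega)^{\mathcal{U}}$, and that anything killed by sheafification already dies in some $C^{p,\bullet}(\Omega)^{\mathcal{U}}$. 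This is the paper's proof, following \cite[Chapter 4, Proposition 4.10]{Ramanan}, and it is the step your outline never actually supplies: your closing appeal to ``Ramanan transported verbatim'' \emph{is} this subdivision argument, but it is not the \v{C}ech argument you describe, and a Mayer--Vietoris or \v{C}ech spectral sequence for the presheaves $H^q(C^{p,\bullet}(-))$ would itself presuppose the small-simplices theorem.

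Two further remarks. First, Proposition \ref{prop:acyclic} is not needed for this lemma at all; it enters only afterwards, in Proposition \ref{Prop:tropsheaf}, to show that $\underline{C}^{p,\bullet}$ is a resolution of $\FS^p$. Folding it into the present lemma conflates the purely formal comparison of $C^{p,\bullet}$ with its sheafification (valid for any polyhedral space with a face structure) with the local computation. Second, the worry you raise about whether intersections of basic opens are again basic is an artifact of your \v{C}ech framing; the subdivision argument uses arbitrary open covers and never needs this. For compact supports, the only additional observation required is that $b$ does not enlarge the support of a cochain, so the same chain homotopies restrict to $C^{p,\bullet}_c$.
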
 
\begin{proof}

The presheaves $\Omega \mapsto C^{p,q}(\Omega)$ are flasque. 
Given an open subset $U$, an open cover $(U_i)_{i \in I}$ and elements $c_i \in C^{p,q}(U_i)$ which agree when 
restricted to intersections, we can define a cochain $c \in C^{p,q}(U)$ by assiging to every element $v \otimes \delta \in C_{p,q}(U)$ 
the value 
$c (v \otimes \delta) := c_i(v \otimes \delta)$ if $\delta(\Delta) \subset U_i$ and $c(v \otimes \delta) := 0$ else.
Thus the presheaves $C^{p,q}(\Omega)$ satisfy the glueing axiom. 
This imples that the map 
$C^{p,q}(\Omega) \rightarrow \underline{C}^{p,q}(\Omega)$
is surjective for all $\Omega$. 
This implies flasqueness of $\underline{C}^{p,q}$.

As in \cite[p.110]{Ramanan} for a $q$-simplex $\delta$, let $\tilde{b}(\delta)$ denote the collection of simplicies which is the  barycentric subdivision of $\delta$. 
For $\alpha \in C^{p,q}(\Omega)$ define the cochain $b(\alpha)$ by $b(\alpha)(v \otimes \delta) =  \alpha (v \otimes \tilde{b}(\delta))$. 
For an open cover $\US = (U_i)$ of $\Omega$ we define $C^{p,\bullet}(\Omega)^{\US}$ to be the subcomplex of $C^{p,\bullet}(\Omega)$ 
given by  cochains supported on simplicies which are contained in one of the  open sets $U_i$. 
As in \cite[Chapter 4, Proposition 4.10 i)]{Ramanan}, we can use $b$ to give  a  canonical map 
$C^{p,\bullet}(\Omega) \to C^{p,\bullet}(\Omega)^{\US}$
which is  a quasi-isomorphism. 
Following the proof of \cite[Chapter 4, Proposition 4.10 ii)]{Ramanan}, the claim follows from the fact that  
the map 
$C^{p,\bullet}(\Omega) \to \underline{C}^{p,\bullet}(\Omega)$
factors through $C^{p,\bullet}(\Omega)^{\US}$ for any cover $\US$ and
also that a cochain $\alpha \in C^{p,q}(\Omega)$ which vanishes in $\underline{C}^{p,\bullet}(\Omega)$ must also vanish in $C^{p,\bullet}(\Omega)^{\US}$ for some cover $\US$. 

Since the operator $b$ does not change the support of a cochain, the same arguments work when considering cochains with compact support.
\end{proof}

The next proposition is stated in \cite[Proposition 2.8]{MikZhar}. Here we provide the details of its proof.

\begin{prop} \label{Prop:tropsheaf}
For a polyhedral space $X$ equipped with a  
 face structure, there are canonical isomorphisms
\begin{align*}
H^{p,q}_{\trop}(X) \cong H^q(X, \FS^p) \text{ and } H^{p,q}_{\trop, c}(X) \cong H^q_c(X, \FS^p).
\end{align*}
\end{prop}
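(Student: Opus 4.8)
The plan is to identify both tropical (co)homology groups with sheaf (co)homology of $\FS^p$ by exhibiting a flasque resolution of $\FS^p$ whose global sections (respectively, global sections with compact support) compute tropical cohomology (respectively, with compact support). First I would observe that the complex of presheaves $\Omega \mapsto C^{p,\bullet}(\Omega)$ with the coboundary $\partial^*$ is, on each sufficiently small open set, a resolution of $\FS^p$: by Proposition \ref{prop:acyclic} we know that for a basic open $\Omega$ with minimal polyhedron $\sigma$ one has $H^{p,0}_{\trop}(\Omega) = \mathbf{F}^p(\sigma)$ and $H^{p,q}_{\trop}(\Omega) = 0$ for $q > 0$, and by Lemma \ref{lem:Fp} this identifies with $\FS^p(\Omega)$. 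Sheafifying and using that basic open sets form a basis of the topology (Lemma \ref{prop:basicopen}), I would conclude that the complex of sheaves $(\underline{C}^{p,\bullet}, \partial^*)$ is a resolution of $\FS^p$, where the augmentation $\FS^p \to \underline{C}^{p,0}$ sends a section of $\FS^p$ to the cochain that reads off the coefficient at the relevant face. One needs to check exactness of this complex of sheaves at the stalk level, which follows from the basic open set computation since basic opens are cofinal in the neighborhoods of any point.

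Next I would invoke Lemma \ref{Cpqsheaf}: the sheaves $\underline{C}^{p,q}$ are flasque, hence acyclic for both the global sections functor and the global sections with compact support functor (flasque sheaves are soft on paracompact spaces, and soft sheaves are $\Gamma_c$-acyclic, e.g.\ by \cite[III, Theorem 2.7]{Iversen} as used elsewhere in the paper). Therefore $H^q(X, \FS^p)$ is computed by the complex $(\underline{C}^{p,\bullet}(X), \partial^*)$ and $H^q_c(X, \FS^p)$ by $(\underline{C}^{p,\bullet}_c(X), \partial^*)$. The second half of Lemma \ref{Cpqsheaf} states precisely that the natural maps $(C^{p,\bullet}(X), \partial^*) \to (\underline{C}^{p,\bullet}(X), \partial^*)$ and $(C^{p,\bullet}_c(X), \partial^*) \to (\underline{C}^{p,\bullet}_c(X), \partial^*)$ are quasi-isomorphisms. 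Combining these, one gets
\[
H^{p,q}_{\trop}(X) = H^q(C^{p,\bullet}(X), \partial^*) \cong H^q(\underline{C}^{p,\bullet}(X), \partial^*) \cong H^q(X, \FS^p),
\]
and the identical chain of isomorphisms with compact supports, proving the proposition.

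The one point requiring genuine care — and what I expect to be the main obstacle — is verifying that $(\underline{C}^{p,\bullet}, \partial^*)$ really is a resolution of $\FS^p$ as sheaves, i.e.\ exactness away from degree $0$ and identification of the degree-$0$ cohomology sheaf with $\FS^p$. Exactness is a statement about stalks, so it suffices to check it on the cofinal system of basic open neighborhoods of each point; there Proposition \ref{prop:acyclic} gives the vanishing of $H^{p,q}_{\trop}(\Omega)$ for $q>0$ and the identification $H^{p,0}_{\trop}(\Omega) \cong \mathbf{F}^p(\sigma) \cong \FS^p(\Omega)$ from Lemma \ref{lem:Fp}. One must also confirm that these identifications are compatible with restriction maps, so that they glue to an isomorphism of the degree-$0$ cohomology \emph{sheaf} with $\FS^p$; this compatibility is built into the construction, since in both cases the relevant map is "evaluate the cocycle/section at the minimal face," but it should be stated. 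For the compact-support statement there is the additional, purely formal check that the augmented complex of compactly-supported sections is still exact and that flasque sheaves remain $\Gamma_c$-acyclic, which is standard on a paracompact, locally compact (indeed second countable Hausdorff polyhedral) space. Once these points are in place, the proposition follows by assembling Proposition \ref{prop:acyclic}, Lemma \ref{lem:Fp}, Lemma \ref{prop:basicopen}, and Lemma \ref{Cpqsheaf} as above.
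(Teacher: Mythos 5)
Your proposal is correct and follows essentially the same route as the paper: both identify $(\underline{C}^{p,\bullet},\partial^*)$ as a flasque (hence acyclic) resolution of $\FS^p$ by checking exactness on basic open sets via Proposition \ref{prop:acyclic} and Lemma \ref{lem:Fp}, and then pass between $C^{p,\bullet}(X)$ and $\underline{C}^{p,\bullet}(X)$ using the quasi-isomorphism of Lemma \ref{Cpqsheaf}. The extra care you flag about compatibility with restrictions and $\Gamma_c$-acyclicity is reasonable but does not change the argument.
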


\begin{proof}
Once again we adopt the proof for constant coefficients to our situation.
Let $\Omega$ be a basic open subset of $X$. 
Then 
$0 \rightarrow \FS^p(\Omega) \rightarrow C^{p,0}(\Omega) \rightarrow \dots$ 
is exact by Lemma \ref{lem:Fp} and Proposition \ref{prop:acyclic}. 
Thus by Lemma \ref{Cpqsheaf} the complex 
$0 \rightarrow \FS^p(\Omega) \rightarrow \underline{C}^{p,0}(\Omega) \rightarrow \dots$
is also exact. 
Since basic open subsets form a basis of the topology, this means that 
$0 \rightarrow \FS^p \rightarrow \underline{C}^{p,0}\rightarrow \dots$
is an exact sequence of sheaves, and therefore an acyclic resolution of $\FS^p$ by Lemma \ref{Cpqsheaf}.
Thus, 
$$H^{q}(X, \FS^p) = H^q( \underline{C}^{p,\bullet}(X), \del) = H^q(C^{p,\bullet}(X), \del) = H^{p,q}_{\trop}(X)  $$
and
$$H^{q}_c(X, \FS^p) = H^q( \underline{C}^{p,\bullet}_c(X), \del) = H^q(C^{p,\bullet}_c(X), \del) = H^{p,q}_{\trop, c}(X).$$
Where we again used Lemma \ref{Cpqsheaf} at both middle equalities. This completes the proof of the proposition. 
\end{proof}

\subsection{Dolbeault cohomology of superforms}\label{sec:Dolbeault}

In this subsection we prove a local exactness result for superforms on polyhedral spaces, called the Poincar\'e lemma. This  extends the Poincar\'e Lemma for superforms on polyhedral complexes in $\mathbb{R}^r$ from  \cite[Theorem 2.16]{Jell}.  Using the acyclicity established in the last section, we identify the Dolbeault cohomology of superforms with the cohomology of certain sheaves. We then calculate the sections of these sheaves on basic open sets in Proposition   \ref{lem:Lp}.

\begin{satz} [Poincar\'e lemma] \label{thm:PLemmaX}
Let $X$ be a polyhedral space and $U \subset X$ an open subset. Let $\alpha \in \AS^{p,q}(U)$ with $q > 0$ and $d''\alpha = 0$. Then for every $x \in U$ there exists an open subset $V \subset X$ with $x \in V$ and a superform $\beta \in \AS^{p,q-1}(V)$ such that $d'' \beta = \alpha|_{V}$.
\begin{proof}
After shrinking $U$, we may assume that there is a chart $\varphi \colon U \rightarrow \Omega$ for $\Omega$ an open subset of the support of a polyhedral complex $\CC$ in $\T^r$. 
Since this question is purely local, we may prove the statement for $x \in \Omega$, where $\Omega$ is an open subset  
of $|\CS|$ for a polyhedral complex $\CC$ in $\T^r$. 

For $\CS$ a polyhedral complex in $\R^r$ the statement of the theorem is proven in  \cite[Theorem 2.16]{Jell}.
By replacing $\CS$ by $\CS_\emptyset$ and $\Omega$ by $\Omega_\emptyset$, we conclude that  if $\sed(x) = \emptyset$ the theorem holds.

For the general case, let $I = \sed(x)$ and after possibly shrinking $\Omega$ we may assume that $I$ is the unique maximal sedentarity among points in $\Omega$
and $\alpha$ is determined by $\alpha_I$ on $\Omega$. After possibly shrinking $\Omega$ again, by the case $I = \emptyset$, there is a superform $\beta_I \in \AS^{p,q}(\Omega_I)$ such that $d'' \beta_I = \alpha_I$. For each $J \subset I$, set $\beta_J = \pi^*_{IJ} \beta_I$.  Then this determines  a superform $\beta \in \AS^{p,q}(\Omega)$ and since the affine pullback commutes with $d''$, we have $d'' \beta_J 
= \alpha_J$, hence $\beta$ has the required property and the theorem is proven. 
\end{proof} 
\end{satz}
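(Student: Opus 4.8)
The plan is to reduce the statement to the already-known Poincar\'e lemma for superforms on polyhedral complexes in $\R^r$, due to \cite[Theorem 2.16]{Jell}, and then handle the boundary strata by a pullback argument. Since the assertion is local at a point $x\in U$, I would first use a chart $\varphi\colon U\to\Omega$ to replace $X$ by an open subset $\Omega$ of the support of a polyhedral complex $\CC$ in $\T^r$, and $x$ by its image in $\Omega$. This is legitimate because both $d''$ and the notion of $(p,q)$-superform are preserved under pullback along extended affine maps (Lemma \ref{lem:pullbackC}), and $\varphi$ is a homeomorphism; so a primitive found downstairs pulls back to a primitive upstairs.

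Next I would treat the case $\sed(x)=\emptyset$. Here the point $x$ lies in the open stratum, so after shrinking $\Omega$ we may replace $\CS$ by $\CS_\emptyset$ and $\Omega$ by $\Omega_\emptyset$, which is an open subset of the support of a polyhedral complex in $\R^r$. Then $\alpha$ restricts to $\alpha_\emptyset\in\AS^{p,q}(\Omega_\emptyset)$ with $d''\alpha_\emptyset=0$, and \cite[Theorem 2.16]{Jell} directly produces the required primitive $\beta$.

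For the general case, let $I=\sed(x)$. After shrinking $\Omega$, I would arrange that $I$ is the unique maximal sedentarity among points of $\Omega$ and that $\alpha$ is determined by $\alpha_I$ on $\Omega$ (this uses the remark following Definition \ref{def:forms}, which says every point has such a neighborhood). Now $\alpha_I\in\AS^{p,q}(\Omega_I)$ is $d''$-closed and $\Omega_I$ is an open subset of a polyhedral complex in $\R^r_I\cong\R^{r-|I|}$, so the case already handled gives, after a further shrinking, a superform $\beta_I\in\AS^{p,q-1}(\Omega_I)$ with $d''\beta_I=\alpha_I$. Setting $\beta_J:=\pi^*_{IJ}\beta_I$ for each $J\subset I$ defines a collection $(\beta_J)_J$; by construction it satisfies the compatibility condition along strata (it is literally a pullback system from the top stratum), so it gives a genuine superform $\beta\in\AS^{p,q-1}(\Omega)$. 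Since the projections $\pi_{IJ}$ commute with $d''$, we get $d''\beta_J=\pi^*_{IJ}d''\beta_I=\pi^*_{IJ}\alpha_I=\alpha_J$ for all $J$, hence $d''\beta=\alpha$ on $\Omega$.

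The only genuine subtlety — and the step I expect to require the most care — is verifying that one can simultaneously shrink $\Omega$ so that (a) $\alpha$ is determined by $\alpha_I$, (b) $\Omega_I$ is small enough for the $\R^r$-Poincar\'e lemma to apply, and (c) the resulting $\beta$, built by pulling back from $\Omega_I$ along the $\pi_{IJ}$, actually lands in $\Omega$ rather than spilling outside it. Concretely one wants $\Omega$ to be (or contain) a neighborhood of the form $\pi_{I,\emptyset}^{-1}(\Omega_I)\cap\Omega$, i.e.\ a ``cube-like'' neighborhood compatible with the stratification, which is exactly what the basic-open-set formalism (Definition \ref{defn:basicopen}) and the compatibility condition guarantee; once that is in place the rest is formal. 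Everything else is a direct bookkeeping of the pullback identities $\pi^*_{IJ}\circ d''=d''\circ\pi^*_{IJ}$ and functoriality of pullback, both recorded earlier in the paper.
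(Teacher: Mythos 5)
Your proposal is correct and follows essentially the same route as the paper's proof: reduce to a chart, invoke \cite[Theorem 2.16]{Jell} for the sedentarity-$\emptyset$ case via $\CS_\emptyset$, and in the general case pull back a primitive of $\alpha_I$ from the stratum $\Omega_I$ along the projections $\pi_{IJ}$, using that these commute with $d''$. Your extra paragraph on choosing the shrinking compatibly with the stratification is a point the paper handles implicitly with ``after possibly shrinking $\Omega$ again,'' and you even correct a small typo (the primitive lies in $\AS^{p,q-1}(\Omega_I)$, not $\AS^{p,q}(\Omega_I)$).
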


\begin{defn} \label{defn:LSp}
For $X$ a polyhedral space and $p \in \mathbb{N}$ we define the sheaf
\begin{align*}
\LS^p_X := \ker (d'' \colon \AS_X^{p,0} \rightarrow \AS_X^{p,1}).
\end{align*}
Again we omit the subscript $X$ on $\LS^p_X$ if the space $X$ is clear from the context. 
\end{defn}

\begin{kor} \label{kor:PLemmaX}
For a polyhedral space $X$ and  all $p \in \mathbb{N}$, the complex 
\begin{align*}
0 \rightarrow \LS^p \rightarrow \AS^{p,0} \overset {d''} {\rightarrow} \AS^{p,1} \overset {d''} {\rightarrow} \AS^{p,2} \rightarrow \dots
\end{align*}
of sheaves on $X$ is exact. Furthermore it is an acyclic resolution, we thus have canonical isomorphisms
$$
H^q(X, \LS^p) \cong H^{p,q}_{d''}(X) \qquad \text{ and} \qquad
H^q_c(X, \LS^p) \cong H^{p,q}_{d'',c}(X).$$
\end{kor}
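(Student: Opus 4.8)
The plan is to deduce Corollary \ref{kor:PLemmaX} from the Poincar\'e lemma (Theorem \ref{thm:PLemmaX}) together with the acyclicity of the sheaves $\AS^{p,q}_X$ established in Proposition \ref{rem:complexonX}. First I would observe that exactness of the complex of sheaves
\begin{align*}
0 \rightarrow \LS^p_X \rightarrow \AS^{p,0}_X \overset{d''}{\rightarrow} \AS^{p,1}_X \overset{d''}{\rightarrow} \cdots
\end{align*}
is a statement about stalks, hence about arbitrarily small open neighborhoods of each point. Exactness at $\AS^{p,0}_X$ is immediate from the very definition $\LS^p_X = \ker(d'' \colon \AS^{p,0}_X \to \AS^{p,1}_X)$, and exactness at $\AS^{p,q}_X$ for $q \geq 1$ is precisely the content of Theorem \ref{thm:PLemmaX}: any local section $\alpha$ of $\AS^{p,q}_X$ with $d''\alpha = 0$ is, on a possibly smaller neighborhood of any given point, of the form $d''\beta$ for some $\beta \in \AS^{p,q-1}_X$. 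This gives the asserted acyclic resolution, the word ``acyclic'' being justified by Proposition \ref{rem:complexonX}, which states that each $\AS^{p,q}_X$ is fine, hence soft, hence acyclic for both the global sections functor and the global sections with compact support functor.

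Next I would invoke the standard homological algebra fact that the cohomology of a sheaf can be computed using any acyclic (in particular any soft or fine) resolution. Concretely, applying the global sections functor $\Gamma(X,-)$ to the resolution yields the complex $(\AS^{p,\bullet}_X(X), d'')$, and since each $\AS^{p,q}_X$ is acyclic for $\Gamma(X,-)$, the hypercohomology spectral sequence degenerates and gives
\begin{align*}
H^q(X, \LS^p_X) \cong H^q\big((\AS^{p,\bullet}_X(X), d'')\big) = H^{p,q}_{d''}(X),
\end{align*}
where the last equality is Definition \ref{Def:DolbeautCohomology}. The compactly supported case is identical: applying $\Gamma_c(X,-)$ to the same resolution and using that the $\AS^{p,q}_X$ are acyclic for $\Gamma_c(X,-)$ as well (again Proposition \ref{rem:complexonX}, citing \cite[III, Theorem 2.7]{Iversen}) produces $H^q_c(X, \LS^p_X) \cong H^{p,q}_{d'',c}(X)$. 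All isomorphisms are canonical because the comparison maps between a complex of acyclic sheaves and an injective resolution are canonical up to homotopy.

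I do not expect any real obstacle here; this corollary is a formal consequence of the two results it cites. The only point requiring a small amount of care is the compactly supported statement, since softness and $\Gamma_c$-acyclicity must be quoted correctly --- but this is exactly what Proposition \ref{rem:complexonX} provides via \cite[III, Theorem 2.7]{Iversen}, so the argument goes through verbatim in both cases. One might also remark that paracompactness of $X$ (part of Definition \ref{def:polyhedralspace}) is what makes fine sheaves soft and the soft-resolution machinery applicable, so there is nothing further to check.
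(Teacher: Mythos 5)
Your proposal is correct and follows exactly the paper's own route: exactness at $\AS^{p,0}$ from the definition of $\LS^p$, exactness in higher degrees from the Poincar\'e lemma (Theorem \ref{thm:PLemmaX}), acyclicity from Proposition \ref{rem:complexonX}, and then the standard fact that an acyclic resolution computes sheaf cohomology for both $\Gamma$ and $\Gamma_c$. The paper's proof is just a two-line version of the same argument, so there is nothing to add.
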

\begin{proof}
Exactness is a direct consequence of Theorem \ref{thm:PLemmaX} and Definition \ref{defn:LSp}. Acyclicity follows from Proposition \ref{rem:complexonX}. 
\end{proof}

\begin{bsp} \label{ExamplepartIII}
We calculate the 
dimensions 
of the Dolbeault cohomology for the polyhedral spaces from Example \ref{ex:part2}. 
Let $h^{p, q}(X) := \dim{H^{p, q}_{d''}}(X)$ for all $p, q$. 

It is easy to see that, for any polyhedral space,  $\LS^0$ is the constant sheaf with stalk ${\R}$. By Corollary \ref{kor:PLemmaX} and comparison with singular cohomology, we obtain $h^{0,0}(X) = 1$ and $h^{0,1}(X) = 0$. This argument shows that, in general, the cohomology groups  $H^{0,q}(X)$ do not depend on the atlas of $X$. 
For the polyhedral space $X$ from Example  \ref{ex:part2}, recall that the compatibility condition for superforms along the boundary strata implies that all smooth functions are locally constant at points $0, 1 \in X$. Also all superforms of positive degree have support away from the boundary points.
Thus  $(1,0)$ and $(1,1)$-superforms on $X$ are simply forms on $\R$ with compact support. Fix a coordinate $x$ on $\R$. Then $\alpha \in \AS_c^{1,0}(\R)$ is of the form $\alpha = f d'x$ with $f \in C_c^{\infty}(\R)$ and is closed precisely if $\frac {\del f} {\del x} = 0$. This means $f = 0$ and hence $\alpha = 0$, thus $h^{1,0}(X) = 0$. 
For $h^{1,1}(X)$ note that a superform $f d'x \vedge d''x$ with $f \in C_c^{\infty}(\R)$ is exact
precisely if $f$ has an antiderivative with compact support in $\R$.
 This is the case  when $\int _\R f = 0$, thus $h^{1,1}(X) = 1$.  Notice that  $h^{0,0}(X) = h^{1,1}(X)$ and $h^{0,1}(X) = h^{1,0}(X)$.   
Example \ref{ex:part2} 
also considered  the polyhedral space $\tilde{X}$ given by $[0,1]$ with the inclusion $[0,1] \inj \R$ as the only chart. 
The dimensions of the cohomology groups for $\tilde{X}$ are 
$$h^{0,0}(\tilde{X}) =1,  \quad h^{0,1}(\tilde{X}) = 0,  \quad h^{1,0}(\tilde{X}) = 1 \quad \text{and} \quad h^{1,1}(\tilde{X}) = 0.$$
We will revisit this in Example  \ref{ExamplepartIV}.
\end{bsp}

\begin{prop}\label{lem:Lp}
Let $\CS$ be a polyhedral complex in $\T^r$ and $\Omega$ be basic open set of $|\CS|$
 with minimal polyhedron $\sigma$ of sedentarity $I$. Then we have
\begin{align*}
\LS^p (\Omega) = \left( \sum \limits_{\tau \in  \CC_I: \sigma \prec \tau} \bigwedge^p \Linear(\tau) \right)^*.
\end{align*}
For basic open subsets $\Omega'\subset\Omega$,
 the restriction maps $\LS^p (\Omega) \rightarrow \LS^p (\Omega')$ are given by the dual of the inclusion
$$\sum \limits_{\tau \in  \CC_I: \sigma' \prec \tau} \bigwedge^p \Linear(\tau) 
\inj \sum \limits_{\tau \in  \CC_I: \sigma \prec \tau} \bigwedge^p \Linear(\tau)$$
when the minimal polyhedron $\sigma'$ of $\Omega'$ is also of sedentarity $I$. 
If the sedentarity of $\sigma'$ is $J \subsetneq I$ then the restriction map is dual to the map 
$$\sum \limits_{\tau \in  \CC_J: \sigma' \prec \tau} \bigwedge^p \Linear(\tau) 
\rightarrow \sum \limits_{\tau \in  \CC_I: \sigma \prec \tau} \bigwedge^p \Linear(\tau)$$
which is the composition of projection $\pi_{IJ}$ and the above inclusion. 
\begin{proof}
We start with the case $I = \emptyset$, thus $\Omega \subset \R^r$. 
Given  a $(p, 0)$-superform 
in the kernel of $d''$,
 the strategy is to construct a superform whose coefficient functions are all constant and to show that this superform agrees with the original superform on $\Omega$.

Recall that $\sigma$ is the minimal polyhedron of the basic open set $\Omega$. 
Set $V = \sum \limits_{\sigma \prec \tau} \bigwedge^p \Linear(\tau)$. There is a  natural map $V^* \rightarrow \LS^p(\Omega) \subset \AS^{p,0}(\Omega)$ and this is clearly injective. To show surjectivity choose $v_1,\dots,v_k$ such that each $v_i \in \bigwedge^p \Linear(\tau)$ for some $\tau$ and $v_1,\dots,v_k$ is a basis of $V$ and extend to a basis $v_1,\dots,v_k,v_{k+1},\dots,v_s$ of $\bigwedge^p \R^r$. 
Write $\alpha \in \LS^p(\Omega)$ as
\begin{align*}
\alpha = \sum \limits_{i = 1} ^{s} f_i d'v_i
\end{align*}
for
$f_i$ smooth functions on open subsets of $\R^r$. Here $d'v_1,\dots,d'v_k$ is the dual to the fixed basis of $\bigwedge^p \R^r$. By definition we have 
$f_i = \langle \alpha, v_i \rangle$.

 Notice that for any $\sigma \prec \tau$, the set $\Omega \cap \tau$ is connected, since it is the intersection of an open cube and a polyhedron. 
 For a fixed $\tau$ such that $\sigma \prec \tau$ and a fixed vector $w_{\tau} \in  \bigwedge^p \Linear(\tau)$ 
 define the  function 
\begin{align}\label{eqn:functiononfaces}
\langle \alpha, w_\tau \rangle: \tau \cap \Omega \rightarrow \R.
\end{align}
The closedness of $\alpha$ implies that this function is constant over
all $x \in \tau \cap \Omega$.
Fix a point $x \in \sigma \cap \Omega$, then define
  $c_i := f_i(x)$ 
  and  $\alpha' := \sum \limits_{i=1} ^k c_i d'v_i$.

We want to show that $\alpha$ and $\alpha'$ are equivalent when restricted to $\Omega$. Then we are done because $\alpha'$ is certainly in the image of $V^*$. 
For any $\tau \in \CC$ and any $w_{\tau} \in \bigwedge^p \Linear (\tau)$ write 
$w_{\tau} = \sum \limits_{i=1} ^k\lambda_i v_i$. Then for any $y \in \Omega$ such that $y \in \inte(\tau)$ we have 
\begin{align*} 
\langle \alpha, w_\tau \rangle (y) & {=} \langle \alpha,  w_\tau \rangle (x)  = \sum \limits_{i=1} ^k \lambda_i \langle \alpha, v_i \rangle (x)  {=} \sum \limits_{i=1} ^k \lambda_i f_i(x) \\
&= \sum \limits_{i=1} ^k\lambda_i c_i = \sum \limits_{i=1} ^k \lambda_i \langle \alpha', v_i \rangle(y) = \langle \alpha',  w_\tau \rangle (y).
\end{align*}
The first equality follows because the function defined in (\ref{eqn:functiononfaces}) is constant. The second equality follows by the definition of $\lambda_i$. The third equality follows from the fact that
$f_i = \langle \alpha, v_i \rangle$. The final equality also follows by definition. 
Therefore, $\alpha$ and $\alpha'$ are equivalent when restricted to $\Omega$. 

For the general case $I \neq \emptyset$, first we apply the above argument to $\Omega_I$ which is a basic open subset of the polyhedral complex $\CC_I$ by Lemma \ref{prop:basicopen}. Writing $X = |\CC|$ and $X_I = |\CS_I|$  
 we obtain
\begin{align*}
\LS_{X_I}^p (\Omega_I) = \left( \sum \limits_{ \tau \in \CC_I: \sigma \prec \tau} \bigwedge^p \Linear(\tau) \right)^*.
\end{align*}
Thus we only have to show
\begin{align*}
\LS_{X_I}^p(\Omega_I) \cong \LS_X^p(\Omega).
\end{align*}

Using the pullbacks of the projection maps define
\begin{align*}
\LS_{X_I}^p(\Omega_I) &\rightarrow \LS_X^p(\Omega) \\
\alpha_I &\mapsto (\pi_{IJ}^*\alpha_I)_{J \subset I}.
\end{align*}
This is clearly well defined and injective, we thus have to show surjectivity. 
More precisely, for $\alpha \in \LS_X^p(\Omega)$, it remains to show that $\alpha_J|_{\Omega_J \cap \tau} = \pi^*_{IJ}(\alpha_I|_{\Omega_I \cap \tau})$ for all $J \subset I$ and $\tau$ such that $\sigma \prec \tau$. 
By the condition of compatibility for $\alpha$ there exists a neighborhood $\Omega_{x}$ of $x$ such that
\begin{align*}
\alpha_J|_{\Omega_{x,J}} = \pi^*_{IJ}( \alpha_I|_{\Omega_{x,I}}),
\end{align*}
hence in particular
\begin{align*}
\alpha_J|_{\Omega_{x,J} \cap \tau } = \pi^*_{IJ}( \alpha_I|_{\Omega_{x,I} \cap \tau}).
\end{align*}
Since $\Omega_I \cap \tau $ is connected, the restriction $\LS_{X_I}^p(\Omega_I \cap \tau) \rightarrow \LS_{X_I}^p(\Omega_{x,I} \cap \tau )$ is injective and similarly if we replace $I$ with $J$.  Thus we have
\begin{align*}
\alpha_J|_{\Omega_J\cap \tau} = (\pi_{IJ}^* \alpha_I|_{\Omega_I \cap \tau}), 
\end{align*}
proving that
\begin{align*}
\LS^p (\Omega) = \left( \sum \limits_{\tau \in  \CC_I: \sigma \prec \tau} \bigwedge^p \Linear(\tau) \right)^*.
\end{align*}

For $\Omega' \subset \Omega$, the claim concerning the restriction maps is clear if the minimal polyhedra of $\Omega$ and $\Omega'$ are of the same sedentarity. If the minimal polyhedron $\sigma'$ of $\Omega'$ is of sedentarity $J$, then the restriction $\LS_X^p(\Omega) \rightarrow \LS_X^p(\Omega')$ is given by restriction on each stratum. By  identifying $\LS^p_X(\Omega) \cong \LS^p_{X_I}(\Omega_I)$ and $\LS^p_X(\Omega') \cong \LS^p_{X_J}(\Omega'_J)$ we obtain the claimed restriction maps.
\end{proof}
\end{prop}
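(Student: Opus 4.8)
The plan is to first reduce to the case where the minimal polyhedron $\sigma$ has sedentarity $I=\emptyset$, prove the statement there by directly analysing the coefficient functions of a $d''$-closed $(p,0)$-superform, and then bootstrap to arbitrary $I$ via the pullbacks $\pi_{IJ}^*$ along the projection maps. Throughout, recall that $\LS^p(\Omega)$ is by definition the space of $d''$-closed superforms in $\AS^{p,0}(\Omega)$.

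For the base case $\Omega\subset\R^r$: put $V:=\sum_{\sigma\prec\tau}\bigwedge^p\Linear(\tau)\subseteq\bigwedge^p\R^r$. Any superform whose constant coefficients lie in the span of the $d'w$, $w\in V$, is $d''$-closed, and such a superform is determined on $\Omega$ by its values on the $\bigwedge^p\Linear(\tau)$ with $\tau\succ\sigma$, i.e.\ by an element of $V^*$; this gives a natural injection $V^*\hookrightarrow\LS^p(\Omega)$, and the content is its surjectivity. Given $\alpha\in\LS^p(\Omega)$, I would choose a basis $v_1,\dots,v_k$ of $V$ with each $v_i$ in some $\bigwedge^p\Linear(\tau)$, extend it to a basis $v_1,\dots,v_s$ of $\bigwedge^p\R^r$, and write $\alpha=\sum_i f_i\, d'v_i$ with $f_i=\langle\alpha,v_i\rangle$. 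The technical heart of this case is the observation that $d''\alpha=0$, written out in affine coordinates along any face $\tau\succ\sigma$, forces $\langle\alpha,w\rangle$ to be constant on $\tau\cap\Omega$ for every $w\in\bigwedge^p\Linear(\tau)$; here one uses that $\tau\cap\Omega$ is connected, being the intersection of a polyhedron with an open cube. Fixing $x_0\in\inte(\sigma)\cap\Omega$ and setting $\alpha':=\sum_{i=1}^k f_i(x_0)\, d'v_i\in V^*$, I would then check $\alpha=\alpha'$ on $\Omega$: by the remark that equivalence of superforms may be tested on relative interiors of facets, it suffices to evaluate both on tangent vectors $w\in\bigwedge^p\Linear(\tau)$ at points $y\in\inte(\tau)$; writing $w=\sum\lambda_i v_i$ and using the constancy of $\langle\alpha,w\rangle$ gives $\langle\alpha,w\rangle(y)=\langle\alpha,w\rangle(x_0)=\sum\lambda_i f_i(x_0)=\langle\alpha',w\rangle(y)$.

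For general $I$: by Lemma \ref{prop:basicopen} the set $\Omega_I$ is a basic open subset of $|\CS_I|\subset\R^r_I$ with the same minimal polyhedron $\sigma$, so the base case applied to $\Omega_I$ identifies $\LS^p_{X_I}(\Omega_I)$ with $\big(\sum_{\tau\in\CC_I,\,\sigma\prec\tau}\bigwedge^p\Linear(\tau)\big)^*$. It remains to show $\LS^p_X(\Omega)\cong\LS^p_{X_I}(\Omega_I)$ via $\alpha_I\mapsto(\pi_{IJ}^*\alpha_I)_{J\subseteq I}$; since $\pi_{IJ}^*$ commutes with $d''$ this map is well defined, and it is plainly injective, so the point is surjectivity, i.e.\ that for $\alpha=(\alpha_J)_J\in\LS^p_X(\Omega)$ one has $\alpha_J=\pi_{IJ}^*\alpha_I$ on all of $\Omega_J$. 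The compatibility condition for $\alpha$ gives this on a neighbourhood of each point; to globalise it one uses that $\Omega_I\cap\tau$ (and hence $\Omega_J\cap\tau$) is connected for every $\tau\succ\sigma$, so the restriction of sections of $\LS^p_{X_I}$ from $\Omega_I\cap\tau$ to any smaller such piece is injective (both being identified, by the base case argument applied fibrewise, with the same finite-dimensional space of constant coefficients), forcing the two sides to agree on each $\tau$ and hence on all of $\Omega_J$. Finally the description of the restriction maps for a basic open $\Omega'\subset\Omega$ is obtained by transporting through these identifications: when $\sed(\sigma')=I$ it is literally the transpose of the inclusion of multi-tangent spaces, and when $\sed(\sigma')=J\subsetneq I$ one computes the restriction stratum by stratum and reads off the transpose of $\pi_{IJ}$ composed with that inclusion.

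I expect the main obstacle to be this local-to-global step for $I\neq\emptyset$: the compatibility relation $\pi_{IJ}^*\alpha_I=\alpha_J$ is given a priori only near each point, and promoting it to an identity on the whole stratum rests on the connectedness of the pieces $\Omega_I\cap\tau$, which is precisely where the hypothesis that $\Omega$ is basic open (so that the defining cube and the polyhedra are convex) enters. The constancy argument in the base case is the other indispensable ingredient, but it amounts to a routine unwinding of $d''\alpha=0$ in affine coordinates.
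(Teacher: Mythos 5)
Your proposal is correct and follows essentially the same route as the paper's proof: the same reduction to the sedentarity-$\emptyset$ case, the same constancy argument for $\langle\alpha,w_\tau\rangle$ on the connected sets $\tau\cap\Omega$, and the same local-to-global step via connectedness to promote the compatibility relation $\pi_{IJ}^*\alpha_I=\alpha_J$ from neighbourhoods to all of $\Omega_J$. No substantive differences to report.
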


\subsection{Equivalence of cohomologies} \label{sec:equivofcoh}

We are now ready to prove that tropical cohomology and Dolbeault cohomology of superforms are isomorphic. We will use the results established in the previous two subsections.

\begin{lem}\label{lem:isosheaves}
Let $X$ be a polyhedral space. 
Then there is a canonical isomorphism of sheaves $\mathcal{L}_X^p \cong \mathcal{F}_X^p$. 
\end{lem}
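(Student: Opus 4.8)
�The plan is to establish the isomorphism $\mathcal{L}^p_X \cong \mathcal{F}^p_X$ on a basis of the topology and then invoke the sheaf axioms to glue. Since both sheaves are defined by gluing along the charts of the atlas, and since the charts identify open subsets of $X$ with open subsets of supports of polyhedral complexes in $\T^r$, it suffices to treat the case $X = |\CS|$ for a polyhedral complex $\CS$ in $\T^r$. On such a space the basic open subsets form a basis of the topology by Lemma \ref{prop:basicopen}, so it is enough to produce compatible isomorphisms $\mathcal{L}^p(\Omega) \cong \mathcal{F}^p(\Omega)$ for all basic open $\Omega$ that commute with the restriction maps; such a family of isomorphisms extends uniquely to an isomorphism of sheaves.

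First I would compare the sections over a basic open set $\Omega$ with minimal polyhedron $\sigma$ of sedentarity $I$. On one side, Lemma \ref{lem:Fp} gives $\mathcal{F}^p(\Omega) = \mathbf{F}^p(\sigma) = \left( \sum_{\tau \in \CC_I:\ \sigma \prec \tau} \bigwedge^p \Linear(\tau) \right)^*$. On the other side, Proposition \ref{lem:Lp} gives exactly $\mathcal{L}^p(\Omega) = \left( \sum_{\tau \in \CC_I:\ \sigma \prec \tau} \bigwedge^p \Linear(\tau) \right)^*$ as well. So on basic open sets the two vector spaces are not merely isomorphic but literally described by the same formula, which gives a canonical identification $\mathcal{L}^p(\Omega) \xrightarrow{\sim} \mathcal{F}^p(\Omega)$.

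Next I would check that these identifications are compatible with restriction. For $\Omega' \subset \Omega$ basic with minimal polyhedra $\sigma'$ and $\sigma$ of sedentarities $J \subseteq I$, Proposition \ref{lem:Lp} describes the restriction map $\mathcal{L}^p(\Omega) \to \mathcal{L}^p(\Omega')$ as the dual of the natural map $\sum_{\tau \in \CC_J:\ \sigma' \prec \tau} \bigwedge^p \Linear(\tau) \to \sum_{\tau \in \CC_I:\ \sigma \prec \tau} \bigwedge^p \Linear(\tau)$ given by the inclusion when $J = I$ and by the composition of $\pi_{IJ}$ with the inclusion otherwise. On the $\mathcal{F}^p$ side, the restriction map $\mathcal{F}^p(\Omega) \to \mathcal{F}^p(\Omega')$ is, via Lemma \ref{lem:Fp}, precisely $r_{\tau\sigma}$ for the pair of minimal polyhedra, which by the definition of the maps $r_{\tau\sigma}$ (dual to $i_{\tau\sigma}$, which is the inclusion when sedentarities agree and otherwise the composition with $\pi_{IJ}$, as recalled just before Definition \ref{def:facestructure}) is the dual of exactly the same map. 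Hence the square of restrictions commutes, and the identifications assemble into an isomorphism of sheaves on $|\CS|$.

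Finally I would argue that this construction is independent of the chosen polyhedral structure — which holds because $\mathcal{F}^p_X$ and $\mathcal{L}^p_X$ are both known to be independent of the polyhedral structure (the former by \cite{MikZhar}, the latter since $\AS^{p,q}_X$ is), so the isomorphism built from one structure agrees with that built from a refinement on the common basic opens — and that it is compatible with the gluing along the charts $\varphi_i$, since the pullback of superforms along extended affine maps is functorial (Lemma \ref{chap4:lem:pullbackII}) and the transition maps for $\mathcal{F}^p$ are induced the same way. The main obstacle I anticipate is not any single hard estimate but the bookkeeping in the restriction-compatibility step: one must carefully match the sign- and projection-conventions in Proposition \ref{lem:Lp} against those defining the maps $r_{\tau\sigma}$ used in the construction of $\mathcal{F}^p$, including the case where $\Omega$ and $\Omega'$ live in strata of different sedentarity so that a projection $\pi_{IJ}$ intervenes on both sides. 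Once that match is verified the rest is formal.
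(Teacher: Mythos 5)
Your proposal is correct and follows essentially the same route as the paper: identify $\LS^p(\Omega)$ and $\FS^p(\Omega)$ on basic open sets via Proposition \ref{lem:Lp} and Lemma \ref{lem:Fp}, check that the identifications commute with restriction (including the sedentarity-changing case where $\pi_{IJ}$ appears on both sides), and conclude that sheaves agreeing on a basis of the topology are isomorphic. The additional remarks on independence of the polyhedral structure and compatibility with the chart gluing are sensible elaborations of what the paper leaves implicit.
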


\begin{proof}
Let $( \varphi_i \colon U_i \rightarrow \Omega_i \subset X_i)_{i \in I}$ be an atlas for $X$ and choose a polyhedral structure $\CS_i $ on $X_i$ for all $i$.  
For $\Omega \subset X$ a basic open subset there is an isomorphism $\mathcal{L}^p(\Omega) \to  \mathcal{F}^p(\Omega)$ by Proposition  \ref{lem:Lp} and 
Lemma \ref{lem:Fp}. 
Also for 
a basic open subset $\Omega'$ contained in $\Omega$, 
the restriction maps  form the following commutative diagram:
\begin{align*}
\begin{xy}
\xymatrix{
\LS^p(\Omega) \ar[rr] \ar[d]^{\cong} && \LS^{p}(\Omega') \ar[d]^{\cong} \\
\FS^p(\Omega) \ar[rr] && \FS^p(\Omega) 
}
\end{xy}
\end{align*}

Thus $\mathcal{L}^p$ and $\mathcal{F}^p$ agree on a basis of the topology of $X$, and by \cite[Proposition I-12, i)]{EisenbudHarris} the two  sheaves agree. 
\end{proof}

Now we arrive at Theorem \ref{introthm1} from the introduction.

\begin{satz}\label{thm:equivalenceDeRhamPQ}
Let $X$ be a  polyhedral space 
equipped with a face structure. Then there are canonical isomorphisms
\begin{align*}
H^{p, q}_{\trop}(X) \cong H^{p,q}_{d''}(X) \qquad  \text{ and} \qquad  
H^{p, q}_{\trop,c}(X) \cong H^{p,q}_{d'',c}(X). 
\end{align*}
\end{satz}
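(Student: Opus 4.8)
The plan is to combine the sheaf-theoretic descriptions of both cohomology theories that have been set up in the preceding subsections. By Corollary \ref{kor:PLemmaX}, the complex of sheaves $\AS^{p,\bullet}_X$ is an acyclic resolution of $\LS^p_X$, so there are canonical isomorphisms $H^q(X,\LS^p_X)\cong H^{p,q}_{d''}(X)$ and $H^q_c(X,\LS^p_X)\cong H^{p,q}_{d'',c}(X)$. On the other side, Proposition \ref{Prop:tropsheaf} gives canonical isomorphisms $H^{p,q}_{\trop}(X)\cong H^q(X,\FS^p_X)$ and $H^{p,q}_{\trop,c}(X)\cong H^q_c(X,\FS^p_X)$, using the face structure on $X$. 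So everything reduces to comparing the two sheaves $\LS^p_X$ and $\FS^p_X$ on $X$.

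First I would invoke Lemma \ref{lem:isosheaves}, which already supplies a canonical isomorphism of sheaves $\LS^p_X\cong\FS^p_X$. That lemma itself is proved by comparing the two sheaves on the basis of basic open sets: Proposition \ref{lem:Lp} computes $\LS^p(\Omega)$ as $\big(\sum_{\tau\in\CC_I:\sigma\prec\tau}\bigwedge^p\Linear(\tau)\big)^*={\bf F}^p(\sigma)$ with $\sigma$ the minimal polyhedron of $\Omega$, while Lemma \ref{lem:Fp} computes $\FS^p(\Omega)={\bf F}^p(\sigma)$ as well; moreover the restriction maps for basic open inclusions $\Omega'\subset\Omega$ agree on both sides (dual of the inclusion, possibly composed with a projection $\pi_{IJ}$ when the sedentarities differ), so the identifications are compatible with restriction and hence glue to a sheaf isomorphism by the basis-patching principle.

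Putting these together: applying $H^q(X,-)$ and $H^q_c(X,-)$ to the isomorphism $\LS^p_X\cong\FS^p_X$ yields canonical isomorphisms $H^q(X,\LS^p_X)\cong H^q(X,\FS^p_X)$ and $H^q_c(X,\LS^p_X)\cong H^q_c(X,\FS^p_X)$, and stringing these together with the two resolution-based identifications above gives
\[H^{p,q}_{\trop}(X)\cong H^q(X,\FS^p_X)\cong H^q(X,\LS^p_X)\cong H^{p,q}_{d''}(X),\]
and likewise in the compactly supported case. One should check that the composite isomorphism is independent of the chosen atlas and face structure, but since every intermediate identification is canonical this is essentially automatic.

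The main obstacle is really concentrated in the lemmas that have already been established, not in this final assembly: the genuine work is (i) the Poincaré lemma for superforms on polyhedral spaces including the boundary strata of $\T^r$ (Theorem \ref{thm:PLemmaX}), which requires the compatibility condition along strata to push through, and (ii) the explicit computation of $\LS^p$ on basic open sets in Proposition \ref{lem:Lp}, where one must argue that a $d''$-closed $(p,0)$-superform has locally constant "face-wise" evaluations and hence agrees with a constant-coefficient form — the subtlety being the passage between strata of different sedentarity via the projections $\pi_{IJ}$ and the connectedness of $\Omega\cap\tau$. Given those, the theorem itself is a formal consequence of comparing two acyclic resolutions of the same sheaf.
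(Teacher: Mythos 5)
Your proposal is correct and follows essentially the same route as the paper: the proof there likewise strings together Corollary \ref{kor:PLemmaX} (superforms as an acyclic resolution of $\LS^p_X$), Proposition \ref{Prop:tropsheaf} (tropical cohomology as sheaf cohomology of $\FS^p_X$), and Lemma \ref{lem:isosheaves} (the canonical isomorphism $\LS^p_X\cong\FS^p_X$ obtained by matching sections and restriction maps on basic open sets). Your identification of where the real work lies --- the Poincar\'e lemma and the computation of $\LS^p$ on basic opens --- also matches the paper's structure.
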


\begin{proof}
Corollary \ref{kor:PLemmaX}  relates the Dolbeault cohomology of superforms on $X$ with the cohomology of
the sheaf $\mathcal{L}^p$. Proposition \ref{Prop:tropsheaf} does the same with tropical cohomology and the cohomology of $\FS^p$. Combining this with Lemma \ref{lem:isosheaves} proves the isomorphisms. 
\end{proof}

\begin{bem}\label{rem:Fpcohomology}
Notice that in the absence of a face structure on $X$, the sheaf cohomology of $\mathcal{F}^p$ and the  Dolbeault cohomology of superforms are still isomorphic by applying Corollary \ref{kor:PLemmaX} and Lemma \ref{lem:isosheaves}.

For a polyhedral space $X$ 
 we have $\LS^0_X = \underline{\R} = \FS^0_X$, where $\underline{\R}$ is the constant sheaf with stalks $\R$. Thus we have $H^{0,q}_{d''}(X) \cong H^{0,q}_{\trop}(X) \cong  H^{q}_{\sing}(X)$ by Proposition \ref{Prop:tropsheaf},  Corollary  \ref{kor:PLemmaX} and \cite[Chapter III, Theorem 1.1]{Bredon}. 

The tropical cohomology groups and Dolbeault cohomology groups of superforms for $p>0$ do however depend heavily on the
 equivalence class of  the chosen atlas, and not just on the topological space underlying a polyhedral space (see Example \ref{ExamplepartIII}). 
\end{bem}

\begin{prop} \label{Prop:functorial}
Let $\CS$ and $\DS$ be polyhedral complexes in $\T^r$ and  $\T^s$ respectively and let $\delta \colon |\DS| \rightarrow |\CS|$ be a map  induced by an extended affine map $\delta \colon \T^r \to \T^s$ such that the image of every face of $\DS$ is a face of $\CS$. Let $X \subset |\CS|$ and $Y  \subset |\DS|$ be open subsets such that $\delta(Y) \subset X$. Then 
there are maps $\delta^*_{d''}: H^{p,q}_{d''}(X) \rightarrow H^{p,q}_{d''}(Y)$ and $\delta^*_{\trop} \colon H^{p,q}_{\trop}(X) \rightarrow H^{p,q}_{\trop}(Y)$. 
Moreover 
the following diagram commutes:
\begin{align*}
\begin{xy}
\xymatrix{
H^{p,q}_{d''}(X) \ar[r] \ar[d]_{\delta^*_{d''}} & H^{p,q}_{\trop}(X) \ar[d]^{\delta^*_{\trop}} \ar[l]\\
H^{p,q}_{d''}(Y) \ar[r] & H^{p,q}_{\trop}(Y). \ar[l]
}
\end{xy}
\end{align*}
If $\delta$ is a proper map, then the same holds for cohomology with compact support. 
\end{prop}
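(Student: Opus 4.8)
The plan is to construct both maps $\delta^*_{d''}$ and $\delta^*_{\trop}$ via pullback on the relevant cochain/form complexes, check compatibility with the differential (respectively the boundary map), and then verify that the comparison isomorphism from Theorem \ref{thm:equivalenceDeRhamPQ} intertwines the two. First I would build $\delta^*_{d''}$: by Lemma \ref{chap4:lem:pullbackII}, the extended affine map $\delta$ induces a pullback $\delta^* \colon \AS^{p,q}(X) \to \AS^{p,q}(Y)$ that is functorial and commutes with $d''$, so it descends to a map on cohomology $H^{p,q}_{d''}(X) \to H^{p,q}_{d''}(Y)$. For $\delta^*_{\trop}$, the hypothesis that the image of every face of $\DS$ is a face of $\CS$ means $\delta$ sends simplices in $C_q(\tau)$ for $\tau \in \DS$ to chains in the appropriate faces of $\CS$, and the induced linear maps on tangent spaces give compatible maps $\mathbf{F}_p(\delta(\tau)) \to \mathbf{F}_p(\tau)$ (wait — this is the wrong direction). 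Concretely, $\delta$ induces a pushforward $\delta_* \colon C_{p,\bullet}(Y) \to C_{p,\bullet}(X)$ on tropical chains (using that the linear part of $\delta$ maps $\mathbf{F}_p(\tau)$ into $\mathbf{F}_p(\delta(\tau))$ and that $\delta$ maps simplices to allowable simplices), which commutes with $\partial$; dualizing gives $\delta^*_{\trop} \colon C^{p,\bullet}(X) \to C^{p,\bullet}(Y)$ commuting with $\partial^*$, hence a map on tropical cohomology.

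Next I would establish commutativity of the square. The horizontal isomorphisms come, via Corollary \ref{kor:PLemmaX} and Proposition \ref{Prop:tropsheaf}, from the identification $\LS^p_X \cong \FS^p_X$ of Lemma \ref{lem:isosheaves} together with the acyclic resolutions $\AS^{p,\bullet}$ and $\underline{C}^{p,\bullet}$. The cleanest route is to observe that $\delta$ induces a morphism of sheaves $\delta^{-1}\FS^p_X \to \FS^p_Y$ (coming from the maps $r_{\tau\sigma}$ on multi-cotangent spaces and the fact that $\delta$ sends faces to faces), and likewise $\delta^{-1}\LS^p_X \to \LS^p_Y$ (the pullback of $d''$-closed $(p,0)$-forms is $d''$-closed), and that the isomorphism of Lemma \ref{lem:isosheaves} is natural with respect to these — this can be checked on the basis of basic open sets using the explicit descriptions in Proposition \ref{lem:Lp} and Lemma \ref{lem:Fp}, since on a basic open set $\Omega$ the map $\delta$ restricts to something whose action on $\mathbf{F}^p(\sigma)$ is the transpose of the linear map on $\bigwedge^p \Linear$. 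Once naturality of $\LS^p \cong \FS^p$ under $\delta$ is in hand, functoriality of sheaf cohomology and the resolutions gives the commuting square directly, because the maps $\AS^{p,\bullet}(X) \to \underline{C}^{p,\bullet}(X)$ realizing the horizontal arrows are themselves compatible with $\delta^*$.

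For the compact-support statement, if $\delta$ is proper then the pushforward $\delta_*$ on chains takes compactly supported cochains on $X$ to compactly supported cochains on $Y$ in the dual sense (the preimage of a compact set is compact), and likewise $\delta^*$ on superforms preserves compact supports; the same diagram chase then applies verbatim with $H_c$ in place of $H$, using the compact-support halves of Corollary \ref{kor:PLemmaX} and Proposition \ref{Prop:tropsheaf}. The main obstacle I anticipate is the bookkeeping in verifying that the isomorphism of Lemma \ref{lem:isosheaves} is genuinely natural in $\delta$ — i.e. tracking, on basic open sets, that the pullback of a constant-coefficient representative under $\delta$ (in the sense of the proof of Proposition \ref{lem:Lp}) corresponds under $\LS^p \cong \FS^p$ to the transpose map on multi-cotangent spaces that defines $\delta^*_{\trop}$, and that this is compatible with the restriction maps when $\delta$ changes the sedentarity of the minimal polyhedron. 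Everything else is routine functoriality.
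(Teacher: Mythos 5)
Your proposal is correct and follows essentially the same route as the paper: both construct the pullbacks on forms and on tropical cochains, and then obtain the commuting square from the naturality under $\delta^*$ of the zig-zag of quasi-isomorphisms $\AS^{p,\bullet}\leftarrow\LS^p\cong\FS^p\rightarrow\underline{C}^{p,\bullet}$ by passing to (hyper)cohomology, with properness handling the compact-support case. The only difference is cosmetic: you propose to verify the naturality of the isomorphism $\LS^p\cong\FS^p$ explicitly on basic open sets via Proposition \ref{lem:Lp} and Lemma \ref{lem:Fp}, a check the paper leaves implicit in its commutative diagram of sheaves.
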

\begin{proof}
We have the following commutative diagram:
\begin{align*}
\begin{xy}
\xymatrix{
\delta^{-1}\AS^{p,\bullet}_X   \ar[d]_{\delta^*_{d''}} & \delta^{-1}\LS^p_X \ar[r] \ar[l] \ar[d]_{\delta^*} & \delta^{-1}\FS^p_X \ar[r] \ar[l]  \ar[d]^{\delta^*} & \delta^{-1}\underline{C}^{p,\bullet}_X \ar[d]^{\delta^*_{\underline{\trop}}} \\
\AS^{p,\bullet}_Y  & \LS^p_Y \ar[r] \ar[l] & \FS^p_Y \ar[r] \ar[l] & \underline{C}^{p,\bullet}_Y. 
 }
\end{xy}
\end{align*}
All horizontal maps are quasi-isomorphisms. Taking hypercohomology of the functor of global sections, we get the cohomology of the complexes of global sections on both the far left and the far right as well as  the cohomology of the sheaves in the middle. This shows the commutativity of the left square in the diagram 
\begin{align*}
\begin{xy}
\xymatrix{
H^{p,q}_{d''}(X) \ar[r] \ar[d]_{\delta^*_{d''}} & H^{q}(\underline{C}^{p,\bullet}(X), \del^*) \ar[d]^{\delta^*_{\underline{\trop}}} \ar[l] \ar[r] & H^{p,q}_{\trop}(X) \ar[l] \ar[d]^{\delta^*_{\trop}} \\
H^{p,q}_{d''}(Y) \ar[r]  & H^{q}(\underline{C}^{p,\bullet}(Y), \del^*)  \ar[l] \ar[r] & H^{p,q}_{\trop}(Y). \ar[l]
}
\end{xy}
\end{align*}
The commutativity of the right square 
 follows by definition of the complexes $(\underline{C}^{p,\bullet}(X), \del^*)$ and tropical cohomology. This proves the claim for usual cohomology. 
 
 If $\delta$ is proper then the pullbacks are well defined for sections with compact support and the above arguments can be applied directly. 
 This completes the proof of the proposition.  
\end{proof}

\begin{Rem}
Another technique to prove Theorem \ref{thm:equivalenceDeRhamPQ} could be to use a map similar to the de Rham map, which provides an isomorphism between the de Rham and singular cohomologies in the classical theory. This de Rham map is given explicitly by
\begin{align*}
H^q_{dR}(X) &\rightarrow H^q_{\sing}(X), \\
[\alpha] &\mapsto \left( (\delta: \Delta_q \rightarrow X) \mapsto \int_{\Delta} \delta^*(\alpha) \right).
\end{align*}
There is a similar map from  spaces of superforms to tropical cochains given by contracting a $(p,q)$-superform by the coefficient of a singular tropical cell. This produces a $(0,q)$-superform which can be  integrated over the simplex following Section \ref{sec:integration}.  
Some care needs to be taken to allow only smooth simplicies (as in the classical case) and also to ensure that the integrals are well defined 
when passing to cohomology.  
We will not do this since it is not required for our considerations, but we  point out that  this could approach could be used to identify the wedge product on Dolbeault cohomology of superforms with the  cup product on  tropical  cohomology. 
\end{Rem}

\section{Poincar\'e duality} \label{chapter4}

In this section we prove Poincar\'e duality for a class of polyhedral spaces, known as  tropical manifolds. 
By Poincar\'e  duality we mean an explicit isomorphism 
$\PD \colon H^{p,q}(X) \rightarrow H^{n-p,n-q}_c(X)^*$.  Just as for  standard differential forms, this map is  defined using a pairing given by integration of superforms. In Subsection \ref{sec:integration}, we show  that the pairing given by integration 
 descends to a  pairing on the 
Dolbeault  cohomology  of superforms on tropical spaces. 
 Finally we show Poincar\'e duality 
for tropical manifolds in Subsection \ref{subsec:PD}.

\subsection{Integration of superforms} \label{sec:integration}

Throughout the next sections we consider the standard lattice $\Z^r \subset \R^r$. Notice that in $\T^r$ there is 
an induced lattice in each stratum $\R^r_I$.
 We define integration of superforms on rational polyhedral complexes in $\T^r$ by extending the theory already developed in $\R^r$. 
 Then we use  partitions  of unity to define the integration of  superforms on rational polyhedral spaces.
We prove a version of Stokes' theorem for superforms on tropical spaces
 which ensures that
 this integration descends to Dolbeault  cohomology.

\begin{lem}\label{lem:intStrata}
Let $X$ be a polyhedral subspace of dimension $n$ 
in $\TT^r$.
Suppose that there exists $I \subset [r]$ such that $X_{I}$ is dense in $X$.
If $\alpha \in \AS^{p,q}_c(X)$ is  such that $\max(p,q) = n$, then $\alpha_I \in \AS^{p, q}(X_I)$ has compact support and 
$\alpha_J = 0$ for each $J \supsetneq I$. 
\end{lem}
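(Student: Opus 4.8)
The plan is to reduce the statement to a local computation about the compatibility conditions of superforms along strata, combined with the dimension vanishing result already recorded in Proposition \ref{rem:complexonX}.

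First I would establish the vanishing $\alpha_J = 0$ for $J \supsetneq I$. Fix such a $J$, and a point $x \in X_J$. Since $X_I$ is dense in $X$, and $X_J$ is contained in the closure of $X_I$ inside $X$, any neighbourhood of $x$ meets $X_I$; more usefully, by the condition of compatibility along strata (condition $ii)$ of Definition \ref{def:forms}, transported to the polyhedral subspace via Lemma \ref{lem:supportboundary} and the remarks preceding it), there is a neighbourhood $U_x$ of $x$ in $X$ such that $\alpha|_{U_x}$ is determined by $\alpha_J|_{U_{x,J}}$ — but this is the wrong direction. Instead I would argue as follows: by density of $X_I$, there must be a polyhedron $\tau$ of the defining complex with $\inte(\tau) \subset \R^r_I$ and with $\sigma \prec \tau$ for the (minimal, in a basic open neighbourhood) polyhedron $\sigma \ni x$ of sedentarity $J$. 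Shrinking $U_x$ to a basic open set, the compatibility condition forces $\alpha_J = \pi_{IJ}^* (\text{something defined on } U_{x,I})$ on $U_{x,J}$; but $\pi_{IJ}^*$ raises the rank of the tangent space it can see only through $\Linear(\tau_I)$, and since $\dim X \le n$ while we are looking at $\max(p,q)=n$ forms, the pullback $\pi_{IJ}^*\alpha_I$ must vanish on the smaller-dimensional stratum. The cleanest way to see this: $\AS^{p,q}_{X_J} = 0$ when $\max(p,q) > \dim X_J$, and $\dim X_J < \dim X = n$ because $X_I$ is dense and $X_J \subsetneq \overline{X_I}$ is a proper closed subset of strictly smaller dimension; hence $\alpha_J = 0$ for every $J \supsetneq I$. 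The main obstacle here is justifying $\dim X_J < n$ rigorously: density of $X_I$ guarantees the top-dimensional facets all lie (in their relative interiors) in $\R^r_I$, and any polyhedron of sedentarity $J \supsetneq I$ is a proper face of such a facet, hence of dimension $< n$.

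Next I would handle the compact support of $\alpha_I$. By Lemma \ref{lem:supportboundary} we have $\supp \alpha = \bigcup_{J} \supp \alpha_J = \supp \alpha_I$, using the vanishing just proved. Since $\alpha \in \AS^{p,q}_c(X)$, the set $\supp \alpha$ is compact in $X$, hence $\supp \alpha_I = \supp\alpha \cap X_I$ is a closed subset of the compact set $\supp\alpha$, and therefore compact; this shows $\alpha_I \in \AS^{p,q}_c(X_I)$. (One should note $\supp\alpha_I$ is also closed in $X_I$, which is automatic as $\supp\alpha_I$ is closed in $X$.)

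I expect the genuinely delicate point to be the first step: making precise that a $(p,q)$-superform with $\max(p,q) = n = \dim X$ has all its "higher sedentarity" components forced to zero. The argument rests on the interplay between the density hypothesis (which pins down where the top-dimensional polyhedra sit) and the pullback-compatibility condition (which expresses $\alpha_J$ in terms of $\alpha_I$ near boundary points), together with the elementary but essential dimension count $\dim X_J < n$. Once that is in hand, the compact support claim follows immediately from Lemma \ref{lem:supportboundary}.
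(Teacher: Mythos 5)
Your proposal is correct and, once you discard the initial detour through the compatibility conditions, it settles on exactly the paper's argument: density of $X_I$ forces $\dim X_J < n$ for $J \supsetneq I$, so $\alpha_J$ vanishes for dimension reasons, and then Lemma \ref{lem:supportboundary} identifies $\supp\alpha$ with $\supp\alpha_I$, which is therefore compact. The paper's proof is just this two-line dimension count plus the cited lemma, so no comparison beyond that is needed.
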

\begin{proof}
For $J \neq I$ we have $\dim X_J < n$, 
so for  dimension reasons 
 $\supp(\alpha_J) = \emptyset$. 
Then Lemma \ref{lem:supportboundary} shows $\supp(\alpha) = \supp(\alpha_I)$. 
\end{proof}

\begin{defn}\label{def:rational}
A polyhedral complex $\CC$ in $\T^r$ is called 
\emph{rational}  if every polyhedron $\sigma$ is parallel to a  subspace of  $\R^r_{\text{sed}(\sigma)}$ defined over $\Z$. 

A polyhedral space $X$ with atlas $(\varphi_i \colon U_i \to X_i \subset \T^{r_i})_{i \in I}$ is called \emph{rational} if 
every $X_i$ is the  support of a  rational polyhedral complex and the transition functions $\varphi_i \circ \varphi^{-1}_j$ are integral extended affine maps.
\end{defn}

By definition, for any polyhedron $\sigma$ in a rational polyhedral complex there is a  canonical lattice of full rank  $\Z(\sigma) \subset \Linear(\sigma)$.

\begin{defn}
Let $\CS$ be a polyhedral complex  of pure  dimension $n$. 
We write $\CC_n$ for the set of $n$-dimensional polyhedra.
Then $\CS$ is  \emph{weighted} 
if it is  
equipped with a weight function $w \colon \CS_n \to \Z$.  

A polyhedral space $X$ is \emph{weighted} 
if it is 
equipped with a continuous weight function $w \colon X^\circ \to \Z$, 
which is defined on a dense open subset $X^\circ$ of $X$. Furthermore,  we require that for every chart
$\varphi_i \colon U_i \to X_i \subset \T^{r_i}$ of $X$ and every connected component $U$ of $X^\circ$ the space $\varphi_i(U \cap U_i)$ is the  intersection of 
$\varphi_i(U_i)$ with the relative interior of a polyhedron in $\T^{r_i}$. 

For another open dense subset $Y^{\circ} \subset X$  and weight function 
$w' \colon Y^\circ \to \Z$ we say $w$  
is equivalent to $w^{\prime}$ if $w^{\prime} \vert_{Y^{\circ} \cap X^\circ} = w \vert_{Y^\circ \cap X^{\circ}}$ and
$Y^{\circ} \cap X^{\circ}$ is dense in $X$. 
\end{defn}

Note that  the continuity of  a weight function $w$ ensures that it is  constant on any connected component of $X^\circ$.

\begin{defn}
Let $X$ be a rational weighted polyhedral space with weight function $w\colon X^\circ \to \Z$ and 
charts $(\varphi_i \colon U_i \to |\CS_i|)_{i \in I}$, where the $\CS_i$ are weighted rational polyhedral complex. 
We say that $\{\CS_i \,|\, i \in I \}$ is a \emph{weighted rational polyhedral structure} on $X$ if 
for every connected component $U$ of $X^\circ$ the image $\varphi_i (U \cap U_i) = \varphi_i(U_i) \cap \inte(\sigma)$ 
for a polyhedron $\sigma$ in $\CS_i$.  
Moreover, we require that  
$w|_U  \equiv m_{\sigma}$, where  $m_{\sigma}$ is the weight of $\sigma$ in $\CS_i$. 
\end{defn}

For a concrete choice of a weighted rational polyhedral structure on $X$, the weights of faces of $\CS_i$ outside of $\varphi_i(U_i)$ will not matter  for any of our constructions.

Now we recall the definition of integration of superforms on weighted polyhedral complexes in $\R^r$ from Chambert-Loir and Ducros \cite{CLD} but follow the notation of Gubler \cite{Gubler}.  We also extend the definition of integration  to polyhedral complexes in $\T^r$.

\begin{defn} \label{def:integral}
 Let $\CS$ be a pure $n$-dimensional weighted rational polyhedral complex in $\R^r$. 
\begin{enumerate}
\item
Let $\alpha \in  \AS^{n,n}_c(|\CC|)$.  For $\sigma \in \CC_n$, choose a basis $x_1,\dots,x_n$ of $\Z(\sigma)$. Then $\alpha|_\sigma$ can be written as 
\begin{align*}
f_\alpha d'x_1 \vedge d''x_1  \vedge \dots \vedge d''x_n = (-1)^{\frac{n(n-1)}{2}} f_\alpha d'x_1 \vedge d'x_2 \vedge \dots \vedge d''x_{n-1} \vedge d''x_n
\end{align*}
for $f_\alpha \in \AS^{0,0}_c(\sigma)$. Since this is an integral basis, $f_\alpha$ is independent of the choice of $x_1,\dots,x_n$. Then the \emph{integral of $\alpha$ over $\sigma$} is
\begin{align*} 
\int \limits_\sigma \alpha := \int \limits_\sigma f_\alpha,
\end{align*}
where the integral on the right is taken with respect to the volume defined by the lattice $\Z(\sigma) \subset \Linear(\sigma)$. 
The integral over the weighted rational polyhedral complex $\CS$ is
$$ \int \limits_{\CS} \alpha := \sum \limits_{\sigma \in \CS_n} m_\sigma \int \limits_\sigma \alpha,$$
where $m_\sigma$ is the weight of $\sigma$. 
\item
Let $\tau \prec \sigma$ be a face of $\sigma$ of codimension one. Denote by $\nu_{\tau, \sigma} \in \Z(\sigma)$ a representative of the unique generator of $\Z(\sigma) / \Z(\tau)$ which points inside of $\sigma$. Then for $\beta \in \AS^{n,n-1}_c(|\CC|)$  the  \emph{boundary integral of $\beta$ over $\partial \sigma$} is
\begin{align*}
\int \limits_{\del \sigma} \beta = \sum \limits_{\tau \prec \sigma} \int \limits_\tau \langle \beta; \nu_{\tau, \sigma} \rangle_{1},
\end{align*}
where on the right hand side we use the integral of the $(n-1,n-1)$-form $\langle \beta; \nu_{\tau, \sigma} \rangle_{n}$ over the $(n-1)$-dimensional polyhedron $\tau$ as defined in $i)$. The integral over the boundary of the weighted rational polyhedral complex  $\CS$ is $$ 
\int \limits_{\del \CS} \beta := \sum \limits_{\sigma \in \CS_n} m_\sigma \int \limits_{\del \sigma} \beta,$$
where $m_\sigma$ is the weight of $\sigma$. 
\item
If $\CS$ is a weighted rational polyhedral complex in $\TT^r$, then the definitions from $i)$ and $ii)$ can be extended. Note  that if
$\alpha \in \AS^{n,n}_c(|\CC|)$ 
 and $\sigma \in \CS_n$ is the closure of $\sigma' \in \CS_{I,n}$, then the support of $\alpha|_{\sigma}$ is contained in $\sigma'$ by Lemma \ref{lem:intStrata} and we define 
\begin{align*}
\int \limits_\sigma \alpha := \int \limits_{\sigma'} \alpha|_{\sigma'}.
\end{align*}
The same works for integrals of 
$(n, n-1)$-forms with compact support 
 over codimension one  faces. Note that if the codimension one  face $\tau$ of $\sigma$ is of a higher sedentarity than $\sigma$, then by  Lemma \ref{lem:intStrata}, for $\beta \in \AS^{n, n-1}_c(|\CC|)$ the restriction $\beta|_\sigma$ has support away from $\tau$.
Thus for $\int_{ \del \sigma} \beta$ 
we only integrate over codimension one faces of $\sigma$ which have the same sedentarity as $\sigma$. 

\end{enumerate}
\end{defn}

Now integration on polyhedral spaces can be defined using the integration on polyhedral subspaces and partitions of unity just as in manifold theory. 

\begin{defn}
\label{def:integration}
Let $X$ be a pure $n$-dimensional  weighted rational polyhedral space with weight function $w$ and  atlas $(\varphi_i : U_i \rightarrow |\CS_i|)_{i \in I}$ where $\{\CS_i\}_{i \in I}$ is a weighted rational polyhedral structure on $X$. 
Let $\alpha \in \AS_c^{n,n}(X)$ and $(f_j)_{j \in J}$ be a partition of unity with functions in $\AS^{0,0}$ subordinate to the cover $(U_i)$ as in Lemma \ref{lem:partitionsofunity}. 
Define the \emph{integral of $\alpha$ over $X$} by 
\begin{align*}
\int \limits_{X} \alpha := \sum \limits_{j \in J} \; \; \int \limits_{\CS_i} \alpha_j,
\end{align*}
where $\alpha_j \in \AS_c^{n,n}(|\CS_i|)$  is the superform corresponding to $f_j\alpha \in \AS_c^{n,n}(U_i)$. 
Since $\alpha$ has compact support the sum on the right hand side is finite. 
The integral on the right is defined in Definition \ref{def:integral}.  
\end{defn}

Notice that the above definition is dependent on the choice of weight function $w$. 
However, the following lemma 
ensures the above defined integral is independent of the choice of charts and partition of unity
on the polyhedral space.

\begin{lem}
Let $X$ be a pure $n$-dimensional weighted rational  polyhedral space. 
Then the integral from Definition \ref{def:integration} is independent of the choice of atlas, weighted rational polyhedral structure, and partition of unity.
\end{lem}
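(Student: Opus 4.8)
The plan is to follow the standard manifold-theoretic argument for well-definedness of integration, reducing everything to the already-developed integration on polyhedral complexes in $\T^r$ (Definition \ref{def:integral}) together with an invariance statement for that integral under integral extended affine isomorphisms. First I would fix two choices of data: atlases $(\varphi_i \colon U_i \to |\CS_i|)_{i \in I}$ and $(\psi_j \colon W_j \to |\DS_j|)_{j \in J}$, with corresponding weighted rational polyhedral structures, and partitions of unity $(f_i)$ and $(g_j)$ subordinate to the respective covers. By passing to the common refinement of the two atlases (taking the covering $(U_i \cap W_j)_{i,j}$, which is again an atlas since extended affine maps compose and restrict) and using the product $(f_i g_j)$ as a common partition of unity, it suffices to treat two situations separately: (a) the atlas and weighted rational polyhedral structure are fixed and only the partition of unity varies; (b) a single chart is refined or replaced by a coordinate change, i.e.\ one compares $\int_{\CS_i}$ and $\int_{\DS_j}$ for the same superform pushed through an integral extended affine isomorphism between overlapping charts, or through a refinement of the polyhedral structure.

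For step (a), given two partitions of unity $(f_i)$ and $(\tilde f_k)$ subordinate to the same cover $(U_i)$, I would write $\int_X \alpha$ computed with $(f_i)$ as $\sum_i \int_{\CS_i}(f_i\alpha)$, insert $1 = \sum_k \tilde f_k$ (a finite sum on the compact support of $\alpha$), use linearity of $\int_{\CS_i}$ over sums of superforms with compact support inside a chart, and swap the order of summation; symmetry then gives equality. This uses only that $\int_{\CS_i}$ is additive and that $f_i \tilde f_k \alpha$ has compact support in $U_i \cap U_k$, so it can equally be integrated in either chart — which is exactly the content of step (b) restricted to such overlaps. For step (b) I would invoke the transformation behavior of the integral on $\R^r$ (and its strata): an integral extended affine isomorphism $F$ between open subsets of supports of rational polyhedral complexes preserves the lattices $\Z(\sigma)$ up to the induced lattice isomorphism, hence preserves the coefficient function $f_\alpha$ appearing in Definition \ref{def:integral}(i) and the Lebesgue measures it determines; combined with the change-of-variables formula on each maximal cell and compatibility of the weight function (built into the definition of a weighted rational polyhedral structure, where $w|_U \equiv m_\sigma$), this yields $\int_{\DS}(F^* \alpha) = \int_{\CS}\alpha$. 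The strata case $I \ne \emptyset$ reduces to the open-stratum case by Lemma \ref{lem:intStrata}, since $\alpha|_\sigma$ is supported on $\sigma' = \sigma \cap \R^r_I$ and integration is defined via $\sigma'$. Refinement invariance is the special case where $F = \id$ but the polyhedral structure is subdivided: the integral over a cell equals the sum of the integrals over the cells of a subdivision because the lattice $\Z(\tau')$ of a maximal cell of the subdivision equals $\Z(\sigma)$ and the measures are additive over the subdivision.

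The main obstacle, and the step deserving the most care, is the change-of-variables computation in step (b): one must check that the factor $|\det|$ that would normally appear under an affine change of coordinates is exactly absorbed because we integrate with respect to the lattice-normalized volume on each cell, and that the sign conventions in the expression $f_\alpha\, d'x_1 \wedge d''x_1 \wedge \cdots \wedge d''x_n$ are consistent under a change of integral basis (this is already asserted in Definition \ref{def:integral}(i), that $f_\alpha$ is independent of the choice of integral basis, so I would cite that). Once this lattice-normalized change-of-variables statement is in hand — it is essentially the one proved by Chambert-Loir--Ducros and Gubler for $\R^r$, extended stratum-by-stratum to $\T^r$ — assembling (a) and (b) via the common-refinement reduction is routine. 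I would close by remarking that the whole argument is formally identical to the proof that integration of top-degree forms on an oriented Riemannian (here: lattice-equipped) manifold is well defined, with the lattice playing the role of the volume form.
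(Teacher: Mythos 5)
Your proposal is correct and follows essentially the same route as the paper: reduce via a common refinement of atlases and a product partition of unity to comparing integrals of a compactly supported form over an overlap of two charts, reduce stratum-by-stratum to the $\R^r$ case using Lemma \ref{lem:intStrata}, and conclude by invariance of the lattice-normalized integral under integral extended affine isomorphisms. The only cosmetic difference is that the paper packages your change-of-variables step as the pushforward of weighted polyhedral complexes together with the projection formula of Gubler, rather than verifying the determinant cancellation by hand.
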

\begin{proof}

Consider an atlas $(\varphi_i \colon U_i \to |\CS_i|)_{i \in I}$, where $\{ \CS_i \}$ is a weighted rational polyhedral structure on $X$.
For $i, j \in $ let $F:= \varphi_j \circ \varphi_i^{-1}$, $U := U_i \cap U_j$ and $\alpha \in \AS^{p,q}_c(U)$.
Denote by $\alpha_i$ and $\alpha_j$ the forms corresponding to $\alpha$ on $\varphi_i(U)$ and $\varphi_j(U)$ respectively.
We claim that $\int_{\CS_i} \alpha_i = \int_{\CS_j} \alpha_j$. 

We may now assume that $\CS_i$ is a polyhedral complex in $\R^r$. This is because the pullback of a superform $
\alpha$ on a polyhedral complex in $\T^r$ is defined by pulling back the components  $\alpha_I$ along affine 
maps and integration is also defined by considering the intersection of the polyhedral subspace with the vector 
spaces $\R^r_I$.

Let $F_* \CC_i$ be the pushforward of $\CC_i$ in the sense of weighted polyhedral complexes \cite[3.9]{Gubler}.
Since $F$ is an isomorphism between $\varphi_i(U)$ and $\varphi_j(U)$, the weights are constructed from the weight function of $X$,
and $\supp(\alpha_j)$ is contained in $\varphi_j(U)$, 
we have $\int_{\CS_j} \alpha_j = \int_{F_* \CS_i} \alpha_j $. 
By the projection forumla \cite[Proposition 3.10]{Gubler} we have $\int_{F_* \CS_i} \alpha_j = \int_{\CS_i} F^*\alpha_j$. 
By definition we have $\alpha_i = F^* \alpha_j$.
This shows $\int_{\CS_i} \alpha_i = \int_{\CS_j} \alpha_j$.

The lemma now follows by the standard argument for classical manifolds.
Given two atlases, two weighted rational polyhedral structures and two partitions of unity subordinate to their respective covers we can consider the union of the atlases and weighted rational polyhedral structures. We can also form a partition of unity for this new atlas by multiplying the given partitions of unity. 
The argument above then shows independence of the integral.
\end{proof}

\begin{defn}\label{def:tropicalVariety}
Let $\CS$ be a weighted rational polyhedral complex in $\TT^r$ which is pure $n$-dimensional. Let $\tau$ be a face of $\CS$ of dimension $n-1$. 
We say that \textit{$\CS$ is balanced at $\tau$} if 
\begin{align*}
\sum \limits_{\sigma \in \CS_n: \tau \prec \sigma;  \sed(\sigma) = \sed(\tau)
} m_\sigma \nu_{\sigma, \tau} \in \Z(\tau),
\end{align*}
with $\nu_{\sigma,\tau}$ as introduced in Definition \ref{def:integral} $ii)$. 
Note that the above sum is well defined since we sum only over faces $\sigma$ having the same sedentarity as $\tau$. 
We say that $\CS$ satisfies the \textit{balancing condition} if it is balanced at every face of dimension $n-1$.

Let $X$ be a weighted rational polyhedral space with atlas $(\varphi_i : U_i \rightarrow |\CS_i|)_{i \in I}$ 
where $\{\CS_i\}_{i \in I}$ is a weighted rational polyhedral structure on $X$. 
Then $X$ is a \emph{tropical space} if for all $i$, the weighted polyhedral complex $\CS_i$ is balanced
at each face which intersects $\varphi_i(U_i)$. 

A tropical space equipped with a single chart will also be called a \emph{tropical cycle}. 
\end{defn}

It follows from the next theorem that whether or not a weighted polyhedral space is a tropical space does not depend on the choice of a weighted rational polyhedral structure on $X$.

\begin{satz} [Stokes' theorem for tropical spaces] \label{thm:StokesTropical}
Let $X$ be an $n$-dimensional weighted rational polyhedral space. Then $X$ is a tropical space 
if and only if for all  $\beta \in \AS_c^{n,n-1}(X)$ we have 
\begin{align*}
\int \limits_{X} d'' \beta = 0.
\end{align*}
\end{satz}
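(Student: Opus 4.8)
The plan is to reduce the global statement to a local computation on a single rational polyhedral complex in $\R^r$, where it becomes the classical Stokes-type formula for superforms together with the balancing condition. First I would fix a weighted rational polyhedral structure $\{\CS_i\}$ and a partition of unity $(f_j)$ subordinate to the cover $(U_i)$, writing $\beta = \sum_j f_j \beta$ with each $f_j\beta$ supported in a single chart. Since $d''$ is a local operator, $d''\beta = \sum_j d''(f_j\beta)$, and by linearity of the integral it suffices to treat a single $\beta \in \AS^{n,n-1}_c(U_i)$ supported in one chart; transporting via $\varphi_i$, we may assume $\beta \in \AS^{n,n-1}_c(|\CS|)$ for a weighted rational polyhedral complex $\CS$ in $\T^r$, and the claim is that $X$ being tropical is equivalent to $\int_{\CS}d''\beta = 0$ for all such $\beta$. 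By Lemma \ref{lem:intStrata}, the integral only sees the top-dimensional faces of a single sedentarity stratum, so after replacing $\CS$ by $\CS_I$ we are reduced to the case $\CS \subset \R^r$, a pure $n$-dimensional rational polyhedral complex.

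Next I would establish the key local identity: for each facet $\sigma \in \CS_n$ with chosen integral basis $x_1,\dots,x_n$ of $\Z(\sigma)$, writing $\beta|_\sigma = \sum_{s=1}^n (-1)^{?} g_s\, d'x_1\wedge\cdots\wedge d''\widehat{x_s}\cdots\wedge d''x_n$ (the hat denoting omission among the $d''$-factors), one has $d''(\beta|_\sigma) = \big(\sum_s \pm \partial g_s/\partial x_s\big)\, d'x_1\wedge\cdots\wedge d''x_n$, so that the ordinary divergence theorem on the polytope $\sigma$ gives
\begin{align*}
\int_\sigma d''\beta = \int_{\partial\sigma}\beta = \sum_{\tau \prec \sigma} \int_\tau \langle \beta; \nu_{\tau,\sigma}\rangle_n,
\end{align*}
exactly as in Definition \ref{def:integral} ii). This is the superform analogue of the classical Stokes theorem on a single polytope and follows from the definition of $d''$ in coordinates plus Fubini; the sign bookkeeping from the $(-1)^p$ twist in $d''$ and from the ordering of $d'$- and $d''$-factors is the only delicate point here. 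Summing with the weights $m_\sigma$ yields $\int_{\CS}d''\beta = \sum_{\sigma\in\CS_n} m_\sigma \int_{\partial\sigma}\beta = \sum_{\tau} \int_\tau \big\langle \beta; \sum_{\sigma\succ\tau, \sed(\sigma)=\sed(\tau)} m_\sigma \nu_{\sigma,\tau}\big\rangle_n$, where we regroup the boundary contributions over codimension-one faces $\tau$ (using Lemma \ref{lem:intStrata} to discard faces of higher sedentarity, and noting interior codimension-one faces of $|\CS|$ contribute a sum over all adjacent facets).

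Finally I would read off the equivalence. If $\CS$ is balanced at every $\tau$, then $v_\tau := \sum_{\sigma\succ\tau} m_\sigma\nu_{\sigma,\tau} \in \Z(\tau) = \Linear(\tau)$, so the $(n-1,n-1)$-form $\langle\beta;v_\tau\rangle_n$ is obtained by contracting $\beta$ in a tangent direction to $\tau$; evaluated on $\Linear(\tau)$ this involves two equal tangent vectors among the $d'$-slots (namely $v_\tau$ appears together with a full basis of $\Linear(\tau)$), hence $\langle\beta;v_\tau\rangle_n|_\tau = 0$ and each boundary term vanishes, giving $\int_{\CS}d''\beta = 0$. Conversely, if $\CS$ fails to be balanced at some $\tau_0$, the image of $v_{\tau_0}$ in $\R^r/\Linear(\tau_0)$ is nonzero; choosing $\beta$ supported near a relative-interior point of $\tau_0$ whose contraction $\langle\beta;v_{\tau_0}\rangle_n$ is a nonnegative bump form that is not identically zero on $\tau_0$ (using that $v_{\tau_0}$ has a component transverse to $\tau_0$, so the relevant coefficient of $\beta$ can be prescribed freely), makes $\int_{\tau_0}\langle\beta;v_{\tau_0}\rangle_n > 0$ while all other boundary terms can be arranged to vanish by localizing; hence $\int_{\CS}d''\beta \neq 0$. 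Reassembling over charts via the partition of unity — and using the already-proved independence of the integral from the chosen structure — completes the proof. The main obstacle I anticipate is the converse direction: producing, for an unbalanced face, a genuinely global $\beta \in \AS^{n,n-1}_c(X)$ (compatible along all strata of the ambient $\T^r$ and restricting correctly to $|\CS|$) whose single nonzero boundary contribution is witnessed without accidental cancellation from neighboring facets; this requires a careful local construction of the bump superform and an argument that the transverse component of $v_{\tau_0}$ survives the contraction.
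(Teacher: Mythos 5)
Your proposal is correct and follows essentially the same route as the paper: reduce via charts, partitions of unity, Lemma \ref{lem:intStrata}, and the sedentarity strata to the case of a pure-dimensional weighted rational polyhedral complex in $\R^r$, where the equivalence of balancing with the vanishing of $\int d''\beta$ is the classical statement. The only difference is that the paper simply cites \cite[Proposition 3.8]{Gubler} for that $\R^r$ case, whereas you sketch its proof (facet-wise divergence theorem, regrouping boundary terms over codimension-one faces, and the bump-form construction at an unbalanced face for the converse); your sketch is an accurate outline of that cited argument.
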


\begin{proof}
The analogous statement for a weighted rational  polyhedral complex in $\R^r$ is true
 \cite[Proposition 3.8]{Gubler}.
If $\CS$ is a polyhedral complex in $\T^r$, 
denote by $I_1,\dots,I_k$ the subsets of $[r]$ such that there exist maximal faces of sedentarity $I_j$. 
Then $\CS$ is balanced if and only if each $\CS_{I_j}$ is balanced and integration over $\CS$ is just the sum of 
the integration over the $\CS_{I_j}$ (for details see \cite[Lemma 1.2.30 \& Remark 2.1.48]{JellThesis}). 
By considering each $\CS_{I_j}$ separately, we may assume that 
each facet of $\CS$ is of the same sedentarity. Without loss of generality, we may also suppose that this sedentarity is the empty set. 
Then the  balancing condition is a condition on codimension one faces of $\CS_{\emptyset} \subset \R^r$.
Once again by   \cite[Proposition 3.8]{Gubler} the balancing condition is  equivalent to the vanishing of the 
integral 
$\int_{\CS_\emptyset} d'' \beta $ for all $\beta \in \AS_c^{n,n-1}(\vert \CS_{\emptyset} \vert)$. 
Given  $\beta \in   \AS_c^{n,n-1}(\vert \CS \vert)$,  the form  $\beta_{\emptyset}$ also has compact support by  
Lemma \ref{lem:intStrata}.
The equality of the integrals 
$$\int \limits_{\CS} d'' \beta =   \int \limits_{\CS_{\emptyset}} d'' \beta_{\emptyset} $$
follows from part $iii)$ of Definition \ref{def:integral}. 

The statement  for polyhedral spaces follows from the  linearity of both $d''$ and integration.
\end{proof}

\begin{bem}\label{bem:defPD}
Let $X$ be a tropical space of dimension $n$. There is a product 
\begin{align*}
\AS^{p,q}(X) \times \AS^{n-p,n-q}_c(X) &\rightarrow \R, \\
(\alpha, \beta) &\mapsto \int \limits_X \alpha \vedge \beta.
\end{align*}
By Stokes'  theorem \ref{thm:StokesTropical}, given  $\alpha \in \AS^{p,q}(X)$ and $\beta \in \AS^{n-p,n-q-1}_c(X)$ we have 
\begin{align*}
0 = \int \limits_X d''(\alpha \vedge \beta) = \int \limits_X d'' \alpha \vedge \beta + \int \limits_X (-1)^{p+q} \alpha \vedge d''\beta, 
\end{align*}
so that 
\begin{align*}
\int \limits_X d'' \alpha \vedge \beta = (-1)^{p+q+1} \int \limits_X  \alpha \vedge d'' \beta.
\end{align*}
\end{bem}

\begin{defn}\label{def:PoincareMap}
Let $X$ be a tropical space of dimension $n$.
We define 
\begin{align*}
\PD \colon \AS^{p,q}(X) &\rightarrow \AS^{n-p,n-q}_c(X)^*,  \\
\alpha &\mapsto \left( \beta \mapsto  \varepsilon \int \limits_X \alpha \vedge \beta \right)
\end{align*}
where $\AS^{n-p,n-q}_c(X)^* := \Hom_\R(\AS^{n-p,n-q}_c(X), \R)$ denotes the (non-topological) dual vector space of 
$\AS^{n-p,n-q}_c(X)$ and $\varepsilon = (-1)^{p+q/2}$ if $q$ is even and $\varepsilon = (-1)^{(q+1)/2}$ if $q$ is odd. 
 Our choice of $\varepsilon$ together with the Leibniz rule and Stokes' theorem implies that we have a morphism of complexes
\begin{align*}
\PD \colon \AS^{p,\bullet}(X) \rightarrow \AS^{n-p, n-\bullet}_c(X)^*,
\end{align*}
where the dual complex is equipped with the dual differential. 
We now get a map in cohomology
\begin{align*}
\PD \colon H^{p,q}_{d''}(X) \rightarrow H^{n-p, n-q}_{d'',c}(X)^*,
\end{align*}
since we have
\begin{align*}
H^q({\AS^{n-p,n-\bullet}_{c}}(X)^*, {d''}^*) = (H_q(\AS^{n-p,n-\bullet}_c(X), d''))^* = H^{n-p,n-q}_{d'',c}(X)^*.
\end{align*}
\end{defn} 

We will show that the map $\PD$ is an isomorphism for tropical manifolds in the next section.

\subsection{Poincar\'e duality for tropical manifolds} \label{subsec:PD}

In this section we will let   $H^{p,q}(X)$ denote $H^{q}(X, \LS^p)$. By Corollary  \ref{kor:PLemmaX},  we have $H^{p, q}(X) =  H^{p,q}_{d''}(X)$. 
If  $X$ has a face structure, the 
tropical cohomology groups of $X$ are defined and 
  Theorem  \ref{thm:equivalenceDeRhamPQ}  provides canonical isomorphisms  $$H^{p,q}(X) \cong H^{q}(X, \mathcal{L}^p) \cong H^{q}(X, \FS^p) \cong
 H^{p,q}_{\trop}(X).$$
 Similarly, in the setting of cohomology with compact support, let $H^{p,q}_c(X)$ denote $H^{q}_c(X, \LS^p)$.

 We will show that the Poincar\'e duality map given in Definition \ref{def:PoincareMap} is an isomorphism for tropical manifolds.

\begin{defn}
An $n$-dimensional tropical  space $X$   \textit{has Poincar\'e duality} ($\PD$) if
the Poincar\'e duality map 
\begin{align*}
\PD \colon H^{p,q}(X) \rightarrow H^{n-p,n-q}_c(X)^* 
\end{align*}
is an isomorphism for all $p,q$. 
\end{defn}

\begin{bsp} \label{ExamplepartIV} 
Consider again the polyhedral spaces from Example \ref{ExamplepartIII}.  In these examples, the underlying topological space is $[0,1]$, hence compact. Therefore, cohomology and cohomology with compact support are isomorphic.

First take the  charts for $X$ in such a way that $[0,1]$ is the gluing of two copies of $\T$ as in Example \ref{ex:part2}. Letting $h_{\bullet}^{p,q}$ denote $\dim H^{p,q}_{\bullet}(X)$,  then by Example  \ref{ExamplepartIII} we have the equalities $h^{0,0}(X) = 1 = h^{1,1}(X)$ and $h^{1,0}(X) = 0 =  h^{0,1}(X)$.  
If we equip $X$ with the weight function equal to one everywhere, then $X$ is a tropical space.   
It is easy to see that the integration pairing $H^{0,0}(X) \times H^{1,1}(X) \rightarrow \R$ is non-degenerate and so $X$ has $\PD$.  

Alternatively, we can consider the weighted polyhedral space $X$ defined by taking a single chart on $[0,1]$ given by the inclusion of the interval into $\R$ and again equip $X$ with  weight one. However, this does not yield a tropical space since it 
does not satisfy Stokes' theorem. 
 Thus the $\PD$ map is not defined on cohomology. We already saw in Example \ref{ExamplepartIII} that the dimensions of the respective cohomology groups do not agree. 

There are  examples of tropical spaces 
which do not satisfy $\PD$. 
Take for example $Y$ to be the union of the coordinate axes in $\R^2$, again with weight one on each facet. Then it is clear that  $h^{0,0}(Y) = 1$, since $H^{0, 0}(Y)$ is the usual cohomology group $H^0_{\sing}(Y;\R)$. However, it can be shown that  $h^{1,1}_c(Y) = 2$.
\end{bsp}

The rest of this section is devoted to proving Theorem \ref{introthm:Poincare}, which states that tropical manifolds have Poincar\'e duality.
Tropical manifolds are tropical spaces locally modeled on matroidal fans
(see Definition \ref{def:tropicalmanifold}). 

Matroids are a combinatorial abstraction of the notion of independence in mathematics \cite{Oxley}   and every matroid has a representation as a fan tropical variety 
 \cite{Sturmfels:Poly}. Given a matroid $M$ there are explicit constructions of different polyhedral structures for this fan coming from the matroid \cite{FeichtnerSturmfels, ArdilaKlivans}.  In what follows, matroidal fans are always considered to be weighted polyhedral complexes, whose weights are equal to  one on all facets.

\begin{defn} 
A  tropical 
cycle 
in $\R^r$ is \emph{matroidal} 
if it is the support of a matroidal fan $\Sigma$ in $\R^r$ and its weight function is equal to one. 
\end{defn}

\begin{defn} \label{def:tropicalmanifold}
A \textit{tropical manifold} is a tropical space $X$ of dimension $n$ whose weight function is equal to one and has an atlas $(\varphi_i \colon U_i \rightarrow \Omega_i \subset X_i)_{i \in I}$
such that  for all $i$ the spaces  $X_i = \T^{r_i} \times V_i$ where $V_i$ are matroidal 
tropical cycles 
of dimension $n- r_i$ in $\R^{s_i}$. 
\end{defn}

\begin{bsp}

Tropical projective space $\T P^r$ is a tropical manifold using the atlas constructed in Example \ref{ex:tropicalprojectivespace}.

Consider the tropical line $L \subset \R^2$ from Example \ref{ex:tropicalline1} and equip each edge with weight equal to one. The resulting weighted polyhedral 
complex defines
  a tropical manifold since it is the  support of  the matroidal fan associated with the uniform 
matroid $U_{2,3}$ of rank $2$ on $3$ elements. We can also consider the closure of the tropical line in $\mathbb{R}^2$ in tropical projective space $\T P^2$. 
The result is again a tropical manifold, with charts given by restrictions of the charts for $\T P^2$. 

\end{bsp}

We begin by showing that matroidal
 cycles in $\R^r$ have Poincar\'e duality.  
To do this we use an alternative recursive description
 of  matroidal 
cycles
via an operation known as tropical modification  \cite{BIMS15}.  
In the language of matroids, this operation is   related to deletions and contractions. 

\begin{const}
We now describe tropical modifications. Let $W \subset \R^{r-1}$ be a tropical cycle 
and $P\colon \R^{r-1} \to \R$ a piecewise integer affine function. The graph $\Gamma_P(W) \subset \R^{r}$ is the support of a weighted rational polyhedral complex. The weight function on  $\Gamma_P(W)$ is inherited from the weights of $W$. In general, this graph does not satisfy the balancing condition because $P$ is only a piecewise affine function. However, the graph $\Gamma_P(W)$ can be completed to a tropical cycle  
 $V$ in a canonical way. At a codimension one face $E$ of $\Gamma_P(W)$ that does not satisfy the balancing condition, we can attach a facet to $E$ generated by the direction $-e_r$. This facet can be equipped with a unique integer weight so that the resulting polyhedral complex is now balanced at $E$. Applying this procedure at all codimension one faces of $\Gamma_P(W)$ produces a tropical cycle  
 $V$. Notice that there is a map $\delta\colon V \to W$ induced by the linear projection. 
\end{const}

\begin{defn}\label{def:modificationsMat}
Let $W \subset \R^{r-1}$ be a tropical space and $P \colon \R^{r-1} \to \R$ a piecewise integer affine function, then the \textit{open tropical modification} of $W$ along $P$ is the map $\delta\colon V \to W$ where $V\subset \R^r$ is the tropical cycle
 described above. 
The \emph{divisor} of a piecewise integer affine function $P$ restricted to $W$ is the tropical 
space $\text{div}_W(P) \subset W$  supported on the points $w \in W$ such that $ \delta^{-1}(w)$ is a half-line. The  
weight function on 
 $\text{div}_W(P)$ is inherited 
from the tropical cycle 
$V$.  
We also say that $\text{div}_W(P)$ is the \emph{divisor of the modification $\delta$}.

A
\emph{closed tropical modification} is a map $\overline{\delta} \colon \overline{V} \to W$ where $\overline{V} \subset \R^{r-1} \times \T$ is the closure of $V$ and $\overline{\delta}$ is the extension of an open tropical modification $\delta \colon V \to W$. 

A \emph{matroidal tropical modification} is a  modification  where  $V, W,$ and $\text{div}_W(P)$ are all matroidal. 
\end{defn}

\begin{bem}\label{rem:matroidmod}
Suppose the underlying matroid of $V$ is $M$. 
Given a matroidal tropical modification, the  matroid of $W$ is the deletion matroid $M \backslash e$ for some element $e$.  
Moreover, the matroid of  $\text{div}_W(P)$ is the contraction $M |  e$.

Note that for a closed tropical modification $\overline{\delta}: \overline{V} \rightarrow W$ with divisor $D$, the map  $\overline{\delta}|_{\overline{V}_r}\colon \overline{V}_r \rightarrow D$ identifies the subspace $\overline{V}_r = \{ x \in V \ |\  x_r = - \infty \}$ with $D$. We thus may also consider $D$ as a subspace of $\overline{V}$. 
\end{bem}

\begin{bsp}
Let $W = \R$ and equip this space with weight function equal to one. Consider the piecewise integer affine function $P\colon \R \to \R$ defined by $P(x) =  \max(0, x)$. 
The graph $\Gamma_P(W) \subset \R^2$ consists of two half lines meeting at the origin in directions $(0,-1)$ and $(1,1)$. 
The weight on each of the half lines when inherited from $W$ is one.  
To balance the graph, we must attach   to $\Gamma_P(X)$ the half line in direction  $(0,-1)$ at the origin in $\R^2$ and equip this half line with weight one. 
The resulting  space is the tropical line $L$ from Example \ref{ex:tropicalline1}. The 
open tropical modification $\delta \colon L \to W$ of $W$ along $P$ is the map induced by the linear projection with kernel generated by $e_2$. 
The divisor of the modification is the origin in $W = \R$ equipped with weight one. 
When equipped with weight function equal to one, the spaces  $\R$, $L$, and the origin  are all matroidal. Therefore $\delta \colon L \to W$ is a matroidal tropical modification. 
\end{bsp}

It follows from the next proposition that  for any matroidal cycle  $V \subset \R^r$ of dimension $n$ there is a sequence of open matroidal tropical modifications $V \to W_1 \to \dots \to W_{r-n} = \R^n$.

\begin{Prop}\cite[Proposition 2.25]{Shaw:IntMat}
\label{prop:matroidalProj}
Let $V \subsetneq \R^r$ be a matroidal cycle, then there is a coordinate direction $e_i$ such that the
linear projection 
$\delta \colon \R^r \to \R^{r-1}$ with kernel generated by $e_i$ is a matroidal tropical
modification $\delta \colon V \to W$ along a piecewise integer affine function $P$, i.e.~$W \subset \R^{r-1}$ and $D = \text{div}_W(P) \subset \R^{r-1}$ are matroidal cycles.
\end{Prop}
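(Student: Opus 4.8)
The plan is to translate the assertion into the combinatorics of the underlying matroid and then invoke the dictionary between linear projections of Bergman fans and the matroid operations of deletion and contraction. Write $V = B(M)$ for a loopless matroid $M$ on a ground set $E$ with $|E| = r+1$, equipped with a rational polyhedral fan structure carrying weight one on all facets (for instance that of \cite{ArdilaKlivans}), so that $\dim V = \rk(M) - 1 = n$. The Bergman fan $B(M)$ equals all of $\R^r$ exactly when every element of $E$ is a coloop, i.e.~when $M$ is the free matroid with $\rk(M) = |E|$; thus the hypothesis $V \subsetneq \R^r$ forces $M$ to have a non-coloop element $e$. Choosing the distinguished element used to identify $\R^r$ with $\R^E/\R\mathbf{1}$ to be some element other than $e$, the operation ``forget the coordinate $e$'' becomes the linear projection $\delta\colon \R^r \to \R^{r-1}$ whose kernel is the coordinate line $\R e_i$.

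Since $e$ is not a coloop we have $\rk(M\setminus e) = \rk(M)$, so $\delta(V) = B(M\setminus e) =: W$ is again a matroidal cycle of the \emph{same} dimension $n$, now sitting in $\R^{r-1}$ (with its own weight-one fan structure). The technical heart of the proof is to exhibit $\delta|_V\colon V \to W$ as a tropical modification in the sense of Definition~\ref{def:modificationsMat}. Over the relative interior of each cone of $W$ the fibre $\delta^{-1}(x)\cap V$ is either a single point or a closed half-line in the direction $-e_i$ --- a half-line rather than a full line precisely because $e$ is not a coloop. The $e_i$-coordinate of the ``top'' of this fibre defines a function on $W$ which, since $V$ is rational, is piecewise integer affine; call it $P$. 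One then checks that $\Gamma_P(W) \subset V$ and that the facets of $V$ pointing in the direction $-e_i$ are exactly those attached to $\Gamma_P(W)$ by the completion procedure of the construction preceding Definition~\ref{def:modificationsMat}, so that this completion of $\Gamma_P(W)$ is precisely $V$. The combinatorial input is that the lattice of flats of $M\setminus e$ is the image of the lattice of flats of $M$ under $F \mapsto F\setminus e$, while the flats of $M$ containing $e$ map, under the same operation, onto the lattice of flats of $M/e$: the first fact matches the cones of $\Gamma_P(W)$ with flags of flats of $M$ avoiding $e$, and the second governs the attached facets. Moreover, since $V$ already carries weight one on every facet and the completion produces the unique weights making the complex balanced, the weights the completion assigns to the attached facets must all be one.

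It remains to identify the divisor. By construction $\divisor_W(P)$ is the locus of those $x\in W$ over which $\delta^{-1}(x)\cap V$ is a half-line, and through the flag-of-flats description this locus is exactly $B(M/e)$ sitting inside $W = B(M\setminus e)$; equivalently, by Remark~\ref{rem:matroidmod}, it is the subspace $\overline{V}_r$ of the associated closed modification $\overline{\delta}$. Since $e$ is not a coloop, the matroid $M/e$ is loopless --- if $M$ is not simple one first takes for $e$ a non-coloop element parallel to no other element, which is still possible --- so $D = \divisor_W(P) = B(M/e)$ is again matroidal, of dimension $n-1$. Hence $\delta\colon V\to W$ is a matroidal tropical modification and $V$, $W$, $D$ are all matroidal, as claimed.

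I expect the main obstacle to be the middle step: setting up the explicit correspondence between the cones of $B(M)$ on one side and the cones of $B(M\setminus e)$ together with the attached rays on the other, and verifying that the completion procedure returns weight one on every attached facet. This is a bookkeeping argument with the lattice of flats of $M$ and its minors, and it is the substance of \cite[Proposition~2.25]{Shaw:IntMat}; the remaining steps --- reduction to a non-coloop element, the rank identity $\rk(M\setminus e)=\rk(M)$, and the identification $D = B(M/e)$ --- are routine once this dictionary is available.
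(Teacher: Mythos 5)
The paper offers no proof of this proposition; it is imported wholesale from \cite[Proposition 2.25]{Shaw:IntMat}, and your outline follows the same deletion--contraction dictionary that underlies that reference: project along a non-coloop element $e$, identify $W$ with the Bergman fan of $M\setminus e$ and the divisor with that of $M/e$. Note, though, that you explicitly defer the central verification --- that $V$ is exactly the completion of $\Gamma_P(W)$, with the attached weight-one facets lying over $B(M/e)$ --- to the very reference being cited, so as a standalone argument the heart is still missing (which leaves you no worse off than the paper itself). More seriously, your parenthetical fix for the possibility that $M/e$ has loops is wrong. You claim that if $M$ is not simple one can always choose a non-coloop element parallel to no other element; this fails, for instance, for $U_{1,2}$, or for the rank-two matroid on $\{0,1,2\}$ in which $1$ and $2$ are parallel and $0$ is a coloop. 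There every non-coloop lies in a nontrivial parallel class, so $M/e$ has a loop for every admissible $e$ and $B(M/e)=\emptyset$. Geometrically $x_1=x_2$ holds on all of $B(M)$, the projection forgetting $x_e$ is injective on $V$, the function $P$ is globally affine, and $\divisor_W(P)=\emptyset$; projecting along a coloop direction instead drops the dimension of the image and is not a modification at all, so the empty divisor is unavoidable for such $V$. One must therefore either admit the empty cycle as matroidal by convention or treat this case separately; it is harmless in the paper's applications (Lemma \ref{lem:vanishing}, Proposition \ref{Prop:PDforMatroidalRr}) because the modification is then an isomorphism $V\cong W$ and the exact sequences degenerate accordingly, but as written your argument has a gap here.

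A smaller point: the coordinates of $\R^r$ are part of the data of $V$, so you are not free to re-choose which element of the ground set is the distinguished one. This is repairable without that freedom: if all $r$ non-distinguished elements were coloops they would form an independent set of size $r$, forcing $\rk(M)\in\{r,r+1\}$; the case $r+1$ gives $V=\R^r$, and in the case $r$ the fundamental circuit of the distinguished element with respect to the basis of coloops would contain a non-coloop among them, a contradiction. Hence some non-distinguished element is a non-coloop and a valid coordinate direction $e_i$ always exists.
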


	Tropical cohomology is invariant under closed tropical modifications \cite[Theorem 4.13]{Shaw:Surfaces}. The next lemma checks that this isomorphism also applies to cohomology with compact support and that it is compatible with the $\PD$ map.
	
\begin{Prop} \label{prop:closedmodification}
Let $\delta: \overline{V} \rightarrow W$ be a closed matroidal tropical modification for $W \subset \R^{r-1}$ and $\overline{V} \subset \R^{r-1} \times \T$. 
Then there are
isomorphisms 
$$
\delta^* \colon H^{p,q}(W) \rightarrow H^{p,q}(\overline{V}) \qquad  \text{ and} \qquad  
\delta^* \colon  H^{p,q}_c(W) \rightarrow H^{p,q}_c(\overline{V})
$$
which are induced by the pullback of superforms and are compatible with the Poincar\'e duality map.
\end{Prop}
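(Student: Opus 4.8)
The plan is to establish the isomorphisms at the level of sheaves and then lift to cohomology, keeping track of the Poincaré duality map along the way. First I would recall from \cite[Theorem 4.13]{Shaw:Surfaces} (or re-prove it in our language) that a closed tropical modification $\delta\colon\overline V\to W$ induces a quasi-isomorphism $\delta^{-1}\FS^p_W\to\FS^p_{\overline V}$; equivalently, via Lemma \ref{lem:isosheaves}, a quasi-isomorphism $\delta^{-1}\LS^p_W\to\LS^p_{\overline V}$. Concretely one checks on stalks: over a point $w\in W$ the fibre $\delta^{-1}(w)$ is either a point or a half-line $[-\infty,c)$ (when $w\in D$), and in either case the map on multi-cotangent spaces ${\bf F}^p$ is an isomorphism, because the extra ray direction $-e_r$ attached in the modification, together with the graph direction of $P$, spans the same wedge powers as the original tangent data of $W$ — this is precisely the local computation underlying the modification invariance. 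Since $\delta$ is a proper map (it is the restriction of a linear projection to a closed subset, and $\overline V$ is closed in $\R^{r-1}\times\T$ with $\delta$ having compact fibres), the adjunction $\delta^{-1}\dashv\delta_*$ and the fact that $\delta$ is proper give isomorphisms on both ordinary and compactly supported cohomology once we know the sheaf-level map is a quasi-isomorphism.

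Second I would pass to cohomology. Using Corollary \ref{kor:PLemmaX}, $H^{p,q}(W)=H^q(W,\LS^p_W)$ and $H^{p,q}(\overline V)=H^q(\overline V,\LS^p_{\overline V})$, and similarly with compact supports. The quasi-isomorphism $\delta^{-1}\LS^p_W\simeq\LS^p_{\overline V}$ together with properness of $\delta$ yields
\begin{align*}
H^q(W,\LS^p_W)\;\cong\;H^q(\overline V,\delta^{-1}\LS^p_W)\;\cong\;H^q(\overline V,\LS^p_{\overline V}),
\end{align*}
and the analogous chain for $H^q_c$. At the level of superforms this map is simply $\alpha\mapsto\delta^*\alpha$, the pullback of Lemma \ref{lem:pullbackC}, which is defined because $\delta$ is an (integral) extended affine map between the relevant polyhedral subspaces; since pullback commutes with $d''$, it descends to the Dolbeault cohomology groups and agrees with the sheaf-theoretic isomorphism above. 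For compact supports one uses that $\delta$ proper implies $\delta^*$ sends compactly supported forms to compactly supported forms.

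Third, compatibility with $\PD$. Both $\overline V$ and $W$ are tropical manifolds of the same dimension $n$ (the divisor construction preserves dimension and the balancing and weight-one conditions), so the Poincaré duality maps $\PD_W$ and $\PD_{\overline V}$ are defined. What must be checked is that for $\alpha\in\AS^{p,q}(W)$ and $\beta\in\AS^{n-p,n-q}_c(\overline V)$ one has a projection-formula identity
\begin{align*}
\int_{\overline V}\delta^*\alpha\vedge\beta\;=\;\int_W\alpha\vedge\delta_*\beta,
\end{align*}
where $\delta_*$ denotes pushforward of superforms (integration over the fibres, which here are points or half-lines). Since the isomorphism $H^{p,q}_c(W)\to H^{p,q}_c(\overline V)$ is $\delta^*$ and its inverse is realised by $\delta_*$, this projection formula — which is essentially \cite[Proposition 3.10]{Gubler} applied componentwise on each stratum of $\overline V$, noting that the stratum $\overline V_r$ maps isomorphically onto $D$ — gives $\PD_W=\PD_{\overline V}\circ\delta^*$ up to the sign conventions, hence $\PD_W$ is an isomorphism iff $\PD_{\overline V}$ is.

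The main obstacle I expect is the compatibility-with-$\PD$ step: one has to set up the pushforward $\delta_*$ on compactly supported superforms on a space with boundary strata and verify the projection formula there, being careful that the contribution of the sedentarity-$\{r\}$ stratum $\overline V_r\cong D$ is exactly accounted for and that the $\varepsilon$-signs in Definition \ref{def:PoincareMap} match on both sides. The sheaf-level quasi-isomorphism, by contrast, is a reasonably direct stalk computation once the modification formalism of Definition \ref{def:modificationsMat} and Remark \ref{rem:matroidmod} is in hand.
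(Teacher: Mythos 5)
There is a genuine gap in your first step. The map $\delta^{-1}\FS^p_W\to\FS^p_{\overline V}$ is \emph{not} an isomorphism of sheaves, so it cannot be established by the stalk computation you describe. Already for the simplest closed matroidal modification $\overline L\to W=\R$ along $P(x)=\max(0,x)$ this fails in both directions: at the vertex of $L$ the multi-tangent space ${\bf F}_1$ is spanned by $(-1,0)$, $(0,-1)$, $(1,1)$ and hence equals $\R^2$, while ${\bf F}_1(0)=\R$ on $W$, so the stalk map is a proper inclusion $\R^*\inj(\R^2)^*$; and at the point $(0,-\infty)$ of sedentarity $\{2\}$ the stalk of $\FS^1_{\overline L}$ is $0$ while that of $\delta^{-1}\FS^1_W$ is $\R$. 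The attached ray $-e_r$ genuinely enlarges the local tangent data rather than reproducing it, contrary to your claim that it ``spans the same wedge powers.'' The isomorphism holds only after passing to cohomology; the paper obtains it by citing \cite[Theorem 4.13]{Shaw:Surfaces} (a global homotopy argument, not a local one), transporting it to Dolbeault cohomology via Proposition \ref{Prop:functorial}, and to compact supports by observing that $\delta$ and the homotopy used there are proper. Your chain $H^q(W,\LS^p_W)\cong H^q(\overline V,\delta^{-1}\LS^p_W)\cong H^q(\overline V,\LS^p_{\overline V})$ therefore breaks at the second step, and the first step would in any case need a Vietoris--Begle or Leray argument that you only gesture at.

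Your third step also carries more weight than necessary: you posit a pushforward $\delta_*$ of superforms (fibre integration) and a two-sided projection formula, neither of which is set up in the paper. Compatibility with $\PD$ only requires the single identity $\int_W\omega=\int_{\overline V}\delta^*\omega$ for $\omega\in\AS^{n,n}_c(W)$; combined with $\delta^*(\alpha\wedge\beta)=\delta^*\alpha\wedge\delta^*\beta$ this gives $\PD_{\overline V}(\delta^*\alpha)(\delta^*\beta)=\PD_W(\alpha)(\beta)$, which suffices since $\delta^*$ is an isomorphism on both factors. That identity follows from Gubler's projection formula \cite[Proposition 3.10]{Gubler} applied on the open part $\overline V\cap\R^r$, after noting via Lemma \ref{lem:intStrata} that $\delta^*\omega$ is supported there and choosing polyhedral structures so that $W$ is the polyhedral pushforward of $\overline V\cap\R^r$ along $\delta$. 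Your concern about the sedentarity-$\{r\}$ stratum is resolved by exactly this support observation: it contributes nothing to the integral.
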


\begin{proof}
The fact that $\delta^*$ is an isomorphism for tropical cohomology is shown in \cite[Theorem 4.13]{Shaw:Surfaces} and \cite[Proposition 5.6]{JRS}. By Proposition 3.24 this also applies to $H^{p,q}$. For the isomorphism of the compactly supported cohomology groups we can apply the arguments used in  \cite[Proposition 5.6]{JRS} for Borel-Moore homology together with linear duality.

To show that the isomorphism  $\delta^*$ is compatible with the Poincar\'e duality map, if suffices to show  that for $\omega \in \AS^{n,n}_c(W)$ we have 
\begin{align*}
\int_{W} \omega = \int_{\overline{V}} \delta^*(\omega). 
\end{align*}
Showing this is sufficient since 
the wedge product
 is compatible with the pullback. We can choose polyhedral structures on $W$ and $\overline{V} \cap \R^r$ so that 
$W$ is the pushforward of $\overline{V} \cap \R^r$ along $\delta$ in the sense of polyhedral complexes. Then the result follows from the projection formula \cite[Proposition 3.10]{Gubler} since 
 the support of $\delta^{-1}(\omega)$ is contained in $\overline{V} \cap \R^r$ by Lemma \ref{lem:intStrata}. This completes the proof. 
\end{proof}

The next statement relates the cohomology with compact support of the  matroidal cycles of an open tropical modification by an exact sequence.

\begin{Prop} \label{Prop:SeqComCoh}
Let $\Omega$ be an open subset of a polyhedral subspace in $\TT^r$. 
For  $i \in [r]$,  let  $D = \Omega \cap \T^r_i$ and $U:= \Omega \setminus D$. Then there exists a long exact sequence in cohomology with compact support
\begin{align*}
\ldots \rightarrow H^{p,q-1}_c(D) \rightarrow H^{p,q}_c(U) \rightarrow H^{p,q}_c(\Omega) \rightarrow H^{p,q}_c(D) \rightarrow H^{p,q+1}_c(U) \rightarrow \ldots 
\end{align*}
\end{Prop}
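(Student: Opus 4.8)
The plan is to set up the standard long exact sequence of a closed subspace in cohomology with compact support, using the acyclic resolution of $\LS^p$ by sheaves of superforms established in Corollary \ref{kor:PLemmaX}. Concretely, $D = \Omega \cap \T^r_i$ is a closed subspace of $\Omega$ with open complement $U = \Omega \setminus D$, and both $D$ and $U$ are again open subsets of polyhedral subspaces of $\T^r$ (for $U$ one shrinks the ambient open cube; for $D$ one works inside $\R^r_{\{i\}}$ or rather the stratum $\T^r_i$). So all three spaces carry the sheaves $\LS^p$ and the complexes $\AS^{p,\bullet}$.

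First I would produce a short exact sequence of complexes of sheaves on $\Omega$. Let $j\colon U \hookrightarrow \Omega$ be the open inclusion and $\iota\colon D \hookrightarrow \Omega$ the closed inclusion. The natural candidate is
\begin{align*}
0 \rightarrow j_!\, \AS^{p,\bullet}_U \rightarrow \AS^{p,\bullet}_\Omega \rightarrow \iota_*\, \AS^{p,\bullet}_D \rightarrow 0,
\end{align*}
where the second map is restriction of superforms to $D$ (i.e.\ $\alpha = (\alpha_I)_I \mapsto (\alpha_I)_{I \ni i}$, the components living on strata contained in $\T^r_i$) and the first map is extension by zero. Exactness on the left and in the middle is clear; surjectivity on the right is the crucial local point: given a superform on an open subset of $D$, one must extend it to a superform on a neighborhood in $\Omega$. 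This is exactly where I would use the compatibility-along-strata structure — a superform on $D$ near a point $x \in D$ of sedentarity $I \ni i$ is determined by its component $\alpha_I$, and one can pull this back along the projections $\pi_{I'J'}$ to neighboring strata and cut off by a bump function from Lemma \ref{lem:partitionsofunity} to get a superform on a neighborhood in $\Omega$ restricting to the given one. (Since $\AS^{p,q}$ is soft, one can alternatively phrase surjectivity via softness: sections over the closed set $D$ extend.) I expect this surjectivity/extension step to be the main obstacle, though it is essentially local and follows the pattern of Lemma \ref{lem:supportboundary} and Theorem \ref{thm:PLemmaX}.

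Then I would apply the functor of global sections with compact support. Because $\AS^{p,q}$ is soft (Proposition \ref{rem:complexonX}), the sequence remains exact on compactly supported global sections: $\Gamma_c(\Omega, j_!\AS^{p,q}_U) = \AS^{p,q}_{U,c}(U)$, $\Gamma_c(\Omega, \AS^{p,q}_\Omega) = \AS^{p,q}_{\Omega,c}(\Omega)$, and $\Gamma_c(\Omega, \iota_*\AS^{p,q}_D) = \AS^{p,q}_{D,c}(D)$, using that $\Gamma_c$ is exact on short exact sequences of soft sheaves and commutes with $j_!$ and $\iota_*$ in the obvious way. This gives a short exact sequence of complexes
\begin{align*}
0 \rightarrow \AS^{p,\bullet}_{U,c}(U) \rightarrow \AS^{p,\bullet}_{\Omega,c}(\Omega) \rightarrow \AS^{p,\bullet}_{D,c}(D) \rightarrow 0,
\end{align*}
and the associated long exact sequence in cohomology is, by Definition \ref{Def:DolbeautCohomology} and Corollary \ref{kor:PLemmaX}, precisely
\begin{align*}
\ldots \rightarrow H^{p,q-1}_c(D) \rightarrow H^{p,q}_c(U) \rightarrow H^{p,q}_c(\Omega) \rightarrow H^{p,q}_c(D) \rightarrow H^{p,q+1}_c(U) \rightarrow \ldots
\end{align*}
Finally I would remark that $U$ and $D$ are genuinely open subsets of polyhedral subspaces of $\T^r$ (one $\T$-factor is dropped for $D$, and $U$ is cut out by intersecting with $\R^r \times (\text{rest})$ up to relabeling), so $H^{p,q}_c$ is defined for them, and that the connecting homomorphism is the usual one coming from the snake lemma; no naturality beyond what the snake lemma gives is needed for this statement.
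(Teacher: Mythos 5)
Your proof is correct and follows essentially the same route as the paper: the paper writes down the same short exact sequence directly on compactly supported sections, $0 \to \AS^{p,\bullet}_{\Omega,c}(U) \to \AS^{p,\bullet}_{\Omega,c}(\Omega) \to \AS^{p,\bullet}_{D,c}(D) \to 0$, and passes to the long exact cohomology sequence, so your sheaf-level packaging via $j_!$, $\iota_*$ and softness is only a cosmetic difference. One point of emphasis to adjust: exactness in the middle is not ``clear'' on its own --- it is precisely the compatibility condition along strata (a superform restricting to $0$ on $D$ must vanish on a neighborhood of $D$) that identifies the kernel of restriction with forms supported in $U$, and this is the step the paper's proof singles out, whereas you invoke the compatibility condition only for the surjectivity half.
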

\begin{proof}
Note first that $U$ and $\Omega$ are polyhedral spaces via the inclusion into $\T^r$. Furthermore, $D$ is a 
 polyhedral space via the inclusion into $\T^r_i$. 
We claim that the natural sequence of complexes
\begin{align*}
0 \rightarrow \AS^{p,\bullet}_{\Omega,c}(U) \rightarrow \AS^{p,\bullet}_{\Omega,c}(\Omega) \rightarrow \AS^{p,\bullet}_{D,c}(D) \rightarrow 0
\end{align*}
is exact. By the condition of compatibility  for superforms along strata, if a superform restricts to $0$ on $D \subset \T^r_i$, then it must be $0$ on a neighborhood of $D$ in $\T^r$. This shows exactness in the middle of the short exact sequence. Both surjectivity of the last map and injectivity of the first map are clear. The statement of the proposition follows by passing to the  long exact cohomology sequence. 
\end{proof}

We can now prove a vanishing lemma for the cohomology of compact support of a matroidal cycle.

\begin{Lem}\label{lem:vanishing}
Let $V \subset \R^r$ be a matroidal cycle of dimension $n$. 
Then for all $p$ 
\[H^{p, q}_c(V) = 0 \text{ if } q \neq n.\]
\end{Lem}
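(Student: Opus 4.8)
The plan is to proceed by induction on the codimension $r - n$ of $V$ in $\R^r$, using the recursive structure of matroidal cycles via tropical modifications established in Proposition \ref{prop:matroidalProj} together with the long exact sequence of Proposition \ref{Prop:SeqComCoh}.

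\textbf{Base case.} When $r - n = 0$, the matroidal cycle $V$ is all of $\R^n = \R^r$ with weight one. For this case one computes $H^{p,q}_c(\R^n)$ directly. By Corollary \ref{kor:PLemmaX} this equals $H^{p,q}_{d'',c}(\R^n)$, and by a Poincar\'e-lemma-type argument with compact supports (the standard computation of compactly supported de Rham cohomology of $\R^n$, applied in the $d''$-direction; equivalently using that $\AS^{p,\bullet}_{c}(\R^n)$ is, up to reindexing, $p$ copies of the compactly supported de Rham complex on $\R^n$) one gets $H^{p,q}_c(\R^n) = 0$ for $q \neq n$. Alternatively, one may invoke Proposition \ref{Prop:tropsheaf} and the known computation of compactly supported tropical cohomology of $\R^n$.

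\textbf{Inductive step.} Suppose the claim holds for all matroidal cycles of codimension less than $r-n$. Let $V \subsetneq \R^r$ be a matroidal cycle of dimension $n$. By Proposition \ref{prop:matroidalProj} there is a coordinate direction $e_i$ such that the projection $\delta \colon \R^r \to \R^{r-1}$ with kernel $\langle e_i \rangle$ realizes $V$ as an open matroidal tropical modification $\delta \colon V \to W$ along a piecewise integer affine function $P$, with $W \subset \R^{r-1}$ and $D = \divisor_W(P) \subset \R^{r-1}$ both matroidal cycles; note $\dim W = n$ and $\dim D = n-1$, and both have codimension strictly smaller than $r-n$. Let $\overline{V} \subset \R^{r-1} \times \T$ be the associated closed modification. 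Applying Proposition \ref{Prop:SeqComCoh} to $\Omega = \overline{V}$ and the coordinate $r$ (so that $\overline{V} \cap \T^r_r \cong D$ by Remark \ref{rem:matroidmod}, and $\overline{V} \setminus (\overline{V}\cap\T^r_r) = V$), we obtain a long exact sequence
\[
\ldots \rightarrow H^{p,q-1}_c(D) \rightarrow H^{p,q}_c(V) \rightarrow H^{p,q}_c(\overline{V}) \rightarrow H^{p,q}_c(D) \rightarrow H^{p,q+1}_c(V) \rightarrow \ldots
\]
By Proposition \ref{prop:closedmodification}, $\delta^* \colon H^{p,q}_c(W) \xrightarrow{\ \cong\ } H^{p,q}_c(\overline{V})$, so by the inductive hypothesis applied to $W$ (of dimension $n$) we get $H^{p,q}_c(\overline{V}) = 0$ for $q \neq n$. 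By the inductive hypothesis applied to $D$ (of dimension $n-1$) we get $H^{p,q}_c(D) = 0$ for $q \neq n-1$. Feeding this into the long exact sequence: for $q \notin \{n-1, n\}$ both neighbours $H^{p,q-1}_c(D)$ and $H^{p,q}_c(\overline{V})$ (and indeed $H^{p,q}_c(D)$) vanish, forcing $H^{p,q}_c(V) = 0$; the only potentially nonzero group outside degree $n$ is $H^{p,n-1}_c(V)$, which sits in
\[
H^{p,n-2}_c(\overline{V}) \rightarrow H^{p,n-2}_c(D) \rightarrow H^{p,n-1}_c(V) \rightarrow H^{p,n-1}_c(\overline{V}),
\]
and since $n-2 \neq n-1$ gives $H^{p,n-2}_c(D) = 0$ while $n - 1 \neq n$ gives $H^{p,n-1}_c(\overline{V}) = 0$, we conclude $H^{p,n-1}_c(V) = 0$ as well. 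Hence $H^{p,q}_c(V) = 0$ for all $q \neq n$.

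\textbf{Main obstacle.} The substantive input is the base case computation $H^{p,q}_c(\R^n) = 0$ for $q \neq n$; everything else is a formal diagram chase feeding the inductive hypothesis through the long exact sequence. I would expect the base case to be handled either by a direct compactly-supported Poincar\'e lemma for the $d''$-complex on $\R^n$ (mirroring the classical fact $H^q_{c,\mathrm{dR}}(\R^n) = 0$ for $q\neq n$), or by citing the tropical-cohomology computation via Proposition \ref{Prop:tropsheaf}. A minor point to check carefully is the identification $\overline{V} \setminus (\overline{V} \cap \T^r_r) = V$ and $\overline{V} \cap \T^r_r \cong D$ as weighted polyhedral spaces, which is exactly the content of Remark \ref{rem:matroidmod}, and that the dimensions line up so that the inductive hypothesis is applicable to both $W$ and $D$.
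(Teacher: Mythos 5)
Your overall strategy --- reduce to $\R^n$ via the recursive structure of matroidal cycles through tropical modifications, and chase the long exact sequence of Proposition \ref{Prop:SeqComCoh} combined with Proposition \ref{prop:closedmodification} --- is exactly the paper's argument, and your base case and diagram chase are both fine. However, there is a genuine gap in the inductive setup: you induct on the codimension $r-n$ and assert that $W$ and $D$ ``both have codimension strictly smaller than $r-n$.'' This is false for $D$. The divisor $D = \divisor_W(P)$ is a matroidal cycle of dimension $n-1$ sitting in $\R^{r-1}$, so its codimension is $(r-1)-(n-1) = r-n$, the \emph{same} as that of $V$ in $\R^r$. (Concretely: for the tropical line $L \subset \R^2$ one has $W = \R$ and $D = \{0\} \subset \R$, and both $L$ and $D$ have codimension one.) Your inductive hypothesis therefore does not apply to $D$, and the step where you conclude $H^{p,q}_c(D) = 0$ for $q \neq n-1$ is unjustified as written.

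The fix is simply to change the induction variable: induct on the ambient dimension $r$, as the paper does. Both $W$ and $D$ live in $\R^{r-1}$, so the inductive hypothesis for ambient dimension $r-1$ covers both of them regardless of their dimensions, and the rest of your argument goes through verbatim. (The case $V = \R^n$, i.e.\ $r = n$, is then handled directly inside the induction by the computation $H^{p,q}_c(\R^n) = \bigwedge^p {\R^n}^* \otimes H^q_c(\R^n)$, exactly as in your base case; a double induction on the pair $(r-n, r)$ would also work, but is unnecessary.)
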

\begin{proof}
The lemma is proven by  induction on $r$, which is the dimension of the ambient space. When $r = 0$  the space $V$ is a point and the assertion holds. 
We now proceed by induction. 
 If $r = n$,  then $V = \R^n$ and we have $H_c^{p,q}(\R^n) = \bigwedge^p {\R^n}^* \otimes H_c^q(\R^n)$, where $H^q_c(\R^n)$ denotes the usual de Rham cohomology with compact support of $\R^n$. 
We have $H_c^q(\R^n) = 0$ unless $q = n$, so the statement holds in this case.  

Otherwise $ r> n$ and we can apply Proposition \ref{prop:matroidalProj} to obtain a tropical modification $\delta: V \rightarrow W$ with  divisor $D \subset W$ such that $D$ and $W$ are  matriodal cycles in $\R^{r-1}$. 
By the induction assumption, $H^{p,q}_c(D) = 0$ unless $q = n-1$ and $H^{p,q}(W) = 0$ unless $q = n$. 
Applying the long exact sequence from Proposition \ref{Prop:SeqComCoh} and replacing $H^{p,q}_c(\overline{V})$ with $H^{p,q}(W)$ we obtain the sequence  
\begin{align*}
\ldots \to H^{p,q-1}_c(D) \rightarrow H^{p,q}_c(V) \rightarrow H_c^{p,q}(W) \to \ldots
\end{align*}
by Proposition \ref{prop:closedmodification}. 
By the vanishing of cohomologies for $W$ and $D$ 
 we obtain that 
$H^{p, q}_c(V) = 0 \text{ if } q \neq n$. This proves the lemma. 
\end{proof}

The next lemma gives a short exact sequence for the $(p,0)$-cohomology groups of matroidal cycles related by an open tropical modification. 
The statement of the lemma uses the contraction of superforms from Definition \ref{defn:contraction}.

\begin{lem}\label{lem:shortexactMat}
Let  $\delta: V \rightarrow W$  be an open tropical modification along a matroidal fan divisor $D \subset W$. Suppose also that $\delta$ is the restriction of  a linear map on $\R^{r} \to \R^{r-1}$ whose kernel is generated by $e_i$, 
then 
\begin{align*}
\begin{xy}
\xymatrix{
0 \ar[r] & H^{p,0}(W) \ar[r]  & H^{p,0}(V) \ar[r]^{\langle \cdot \ ;e_i\rangle_p} & H^{p-1,0}(D)  \ar[r] &0 
}
\end{xy}
\end{align*}
is an exact sequence. 
\end{lem}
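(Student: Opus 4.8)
The plan is to establish this short exact sequence on the level of the sheaves $\LS^p$ using the explicit description of their sections on basic open sets from Proposition \ref{lem:Lp}, together with the combinatorial interpretation of a matroidal tropical modification in terms of deletion and contraction (Remark \ref{rem:matroidmod}). First I would recall from Proposition \ref{lem:Lp} that $H^{p,0}(V) = \LS^p(V) = (\sum_{\tau \ni 0}\bigwedge^p\Linear(\tau))^* = {\bf F}^p(0)$, where $0$ is the cone point of the underlying matroidal fan, and similarly for $W$ and $D$; here I use that $V$, $W$, $D$ are cones, so they are basic open with minimal polyhedron the cone point. Thus the claim reduces to an assertion of linear algebra: dualizing, it suffices to show that
\begin{align*}
0 \rightarrow {\bf F}_{p-1}(0_D) \xrightarrow{\ e_i\wedge(\cdot)\ } {\bf F}_p(0_V) \xrightarrow{\ \delta_*\ } {\bf F}_p(0_W) \rightarrow 0
\end{align*}
is exact, where $\delta_*$ is induced by the linear projection $\R^r \to \R^{r-1}$ with kernel $\langle e_i\rangle$, and the first map sends $v \mapsto e_i \wedge \widetilde{v}$ for a lift $\widetilde v$ of $v$ (the dual of the contraction $\langle\,\cdot\,;e_i\rangle_p$ is, up to sign, wedging with $e_i$). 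The identification of the maps on cohomology with the duals of these linear maps is where I would invoke that $\delta^*$ on $\LS^p$ is the pullback along the extended affine map (Lemma \ref{lem:isosheaves}, Proposition \ref{Prop:functorial}) and that the contraction $\langle\,\cdot\,;e_i\rangle_p$ on constant superforms is dual to $e_i\wedge(\cdot)$ by definition of contraction (Definition \ref{defn:contraction}).

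The combinatorial heart is to identify the multi-tangent spaces. Writing $M$ for the matroid of $V$, so $W = |B(M\setminus i)|$ and $D = |B(M/i)|$ (Remark \ref{rem:matroidmod}), I would show:
\begin{enumerate}
\item ${\bf F}_p(0_V) = \sum_{\tau}\bigwedge^p\Linear(\tau)$, the sum over cones $\tau$ of the Bergman fan of $M$; similarly ${\bf F}_p(0_W)$ and ${\bf F}_{p-1}(0_D)$.
\item The projection $\delta_*$ maps $\Linear(\tau) \subset \R^r$ onto $\Linear(\delta(\tau)) \subset \R^{r-1}$ for each cone $\tau$ of $B(M)$; since $\delta$ sends cones of $B(M)$ to cones of $B(M\setminus i)$, this shows $\delta_*$ is surjective on ${\bf F}_p$.
\item A cone $\tau$ of $B(M)$ either projects isomorphically (if it does not contain the modification direction $-e_i$ in its span) or contains $-e_i$, in which case $\tau$ lies "over" a cone of $B(M/i)$; in the latter case $\Linear(\tau) = \R e_i \oplus \Linear(\delta(\tau))$ and $\delta(\tau)$ corresponds to a cone of $D$. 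Tracking this carefully yields that $\ker(\delta_*\colon {\bf F}_p(0_V) \to {\bf F}_p(0_W))$ consists exactly of the elements divisible by $e_i$, and that the quotient by the divisibility relation is $\bigwedge^{p-1}$ of the tangent spaces of cones of $B(M/i)$, i.e. ${\bf F}_{p-1}(0_D)$.
\end{enumerate}
Exactness at the left (injectivity of $e_i\wedge(\cdot)$) is then immediate since a nonzero element of $\bigwedge^{p-1}$ of a subspace not containing $e_i$ stays nonzero after wedging with $e_i$, and the lifts can be chosen inside such subspaces.

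I expect the main obstacle to be step (3): making precise the bijection between cones of $B(M/i)$ and the cones of $B(M)$ containing the ray $-e_i$, and checking that the induced identification of exterior powers is compatible with the naive "wedge with $e_i$" map up to the correct sign. This is essentially the fan-theoretic shadow of the deletion-contraction exact sequence, and I would either extract it from the structure of Bergman fans under the tropical modification in \cite{Shaw:IntMat} (cf.\ the construction preceding Definition \ref{def:modificationsMat}), or argue directly: over the open stratum where $\delta$ restricts to a bijection, ${\bf F}_p$ pulls back isomorphically, and the failure of injectivity is concentrated on the attached facets in direction $-e_i$, whose tangent spaces are exactly $\R e_i \oplus \Linear(\text{cone of }D)$. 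Once the linear-algebraic sequence is in hand, transporting it to $H^{p,0}$ via Proposition \ref{lem:Lp} and the compatibility of pullback and contraction with the sheaf isomorphisms is routine.
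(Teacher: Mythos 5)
Your reduction is sound and matches the paper's setup: both arguments identify $H^{p,0}$ of each fan with ${\bf F}^p(0)$ via Proposition \ref{lem:Lp}, and both interpret the two maps as duals of the projection $\delta_*$ and of wedging with $e_i$. Where you genuinely diverge is in how exactness of the resulting linear-algebra sequence is established. The paper does not argue on the fans at all: it quotes the isomorphism of ${\bf F}^p(0)$ with the $p$-th graded piece of the Orlik--Solomon algebra of the underlying matroid (\cite[Lemma 2.2.7]{Shaw:Thesis}, \cite[Theorem 4]{Zharkov:Orlik}), invokes the deletion--contraction short exact sequence for Orlik--Solomon algebras \cite[Theorem 3.65]{OrlikTerao}, and uses Remark \ref{rem:matroidmod} to match $W$ and $D$ with the deletion and the contraction; the identification of the second map with $\langle\,\cdot\,;e_i\rangle_p$ is then obtained by ``direct comparison.'' Your route trades these citations for a direct fan-theoretic argument; if carried out it would be more self-contained and would make the identification of the maps transparent rather than a matter of comparison with an algebraic sequence.

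Be aware, though, that the step you flag as the main obstacle is essentially the entire content of the lemma. Surjectivity of $\delta_*$ and injectivity of $e_i\wedge(\cdot)$ are easy, as you say; but exactness in the middle requires showing that every element of ${\bf F}_p(0_V)$ killed by $\delta_*$ --- which by pure linear algebra has the form $e_i\wedge b$ for some $b\in\bigwedge^{p-1}\R^r$ --- can be rewritten with $b$ a lift of an element of ${\bf F}_{p-1}(0_D)$. This does not follow from inspecting the tangent spaces of individual facets: a kernel element typically arises as a cancellation among several graph facets $\Gamma_P(\sigma)$ whose linear spans do not contain $e_i$ (the lifts of a common subspace of $\Linear(\sigma_1)\cap\Linear(\sigma_2)$ differ by $e_i$-terms when $P$ breaks), and one must show that all such $e_i$-components land in $\sum_F\bigwedge^{p-1}\Linear(F)$ over facets $F$ of $D$. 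That uses that $D$ is exactly $\divisor_W(P)$ together with the combinatorics of Bergman fans (cones indexed by flags of flats), i.e.\ it is the fan-theoretic proof of deletion--contraction. If you do not want to develop that, the efficient way to close the gap is precisely the paper's: pass to the Orlik--Solomon description and quote \cite[Theorem 3.65]{OrlikTerao}.
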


\begin{proof}
For a matroidal fan $V$ let ${\bf F}^p(0)$ denote the value of the cellular sheaf ${\bf F}^p$ from Definition \ref{def:multitangentspacePoint} on $V$ at the origin of the fan.
Then $H^{p, 0}(V)  = {\bf F}^p(0)$ and this group is isomorphic to the 
 $p$-th graded piece of the Orlik-Solomon algebra of the associated matroid 
\cite[Lemma 2.2.7]{Shaw:Thesis}, \cite[Theorem 4]{Zharkov:Orlik}. 
The  pieces of the Orlik-Solomon algebras of the deletion and contraction of a matroid satisfy a short exact sequence similar to the one above \cite[Theorem 3.65]{OrlikTerao}. 
The first part of Remark \ref{rem:matroidmod} explains that $W$ is the matroidal cycle of a deletion of the matroid for $V$ and $D$ is the matroidal cycle of the contracted matroid. 
A direct comparison with this short exact sequence shows that  the last map of the  sequence  above is given by 
 the contraction by $e_i$ in the $p$-{th} component (see the proof of \cite[Lemma 2.2.7]{Shaw:Thesis}). This proves the statement of the lemma. 
\end{proof}

The contraction map in the above proof  is not induced by a map on the level of forms. However, for a closed $(p,0)$-form $\alpha$, the form $\langle \alpha; e_i \rangle_p \in \LS^{p-1,0}(V)$ is  the restriction of a unique form  $\beta \in\LS^{p-1,0}(\overline{V})$. We can restrict $\beta$ to $D$, since we can identify $D$ with a subset of $\overline{V}$ as in  Remark \ref{rem:matroidmod}. This is done using the identifications in Proposition  \ref{lem:Lp}.

\begin{lem} \label{lem:commutativitycomparison}
Let $\delta \colon V \rightarrow W$ be an open matroidal tropical modification  along a  matroidal divisor $D \subset W$. 
Then the 
 diagram 
\begin{align*}
\begin{xy}
\xymatrix
{
0 \ar[r] & H^{p,0}(W) \ar[r] \ar[d]^{\PD} & H^{p,0}(V) \ar[r]^{\langle \cdot \ ;e_i\rangle_p} \ar[d]^{\PD} & H^{p-1,0}(D) \ar[d]^{\PD}  \ar[r] &0 \\
0 \ar[r] & H^{n-p,n}_c(W)^* \ar[r] & H^{n-p,n}_c(V)^* \ar[r]^{g^*} & H^{n-p,n-1}_c(D)^*  \ar[r] &  0, 
}
\end{xy}
\end{align*}
which is obtained by the exact sequences in Proposition \ref{Prop:SeqComCoh} and Lemma \ref{lem:shortexactMat}, is commutative. 
\end{lem}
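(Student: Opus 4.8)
The plan is to verify commutativity of the two squares in the diagram separately. Recall that the top row comes from Lemma \ref{lem:shortexactMat} and the bottom row is the dual of the long exact sequence of Proposition \ref{Prop:SeqComCoh}, using the vanishing Lemma \ref{lem:vanishing} (so that $H^{p,q}_c(D)=0$ for $q\neq n-1$ and $H^{p,q}_c(W)=0$ for $q\neq n$) to cut the long exact sequence into the short exact sequence displayed, and using Proposition \ref{prop:closedmodification} to replace $H^{p,q}_c(\overline{V})$ with $H^{p,q}_c(W)$ via $\delta^*$. So the bottom maps are: the dual of the connecting homomorphism $H^{n-p,n-1}_c(D)\to H^{n-p,n}_c(U)\cong H^{n-p,n}_c(\overline V)\cong H^{n-p,n}_c(W)$ for the left map, and the dual of the extension-by-zero $H^{n-p,n}_c(U)\to H^{n-p,n}_c(\overline V)\to H^{n-p,n}_c(W)$ for $g^*$ on the right (where $U=V$ is the open modification and $\overline V$ its closure, $D=\overline V_r$).

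For the \emph{left square}, I would argue directly on the level of superforms. An element of $H^{p,0}(W)$ is represented by a closed $(p,0)$-form $\eta$; its image in $H^{p,0}(V)$ is $\delta^*\eta$. To compute $\PD$ of either, one pairs against a compactly supported $(n-p,n)$-form. On $W$, $\PD(\eta)(\omega)=\varepsilon\int_W \eta\wedge\omega$; on $V$, $\PD(\delta^*\eta)(\omega')=\varepsilon\int_V \delta^*\eta\wedge\omega'$. The left vertical on the bottom is dualized from $\delta^*\colon H^{n-p,n}_c(W)\to H^{n-p,n}_c(\overline V)$; under the identification $\overline V\cap\mathbb R^r = V$ and Lemma \ref{lem:intStrata} the integral over $\overline V$ of a compactly supported top form equals the integral over $V$. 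Hence commutativity of the left square reduces exactly to the identity $\int_V \delta^*(\eta\wedge\omega)=\int_W \eta\wedge\omega$, which is the projection-formula computation already carried out in the proof of Proposition \ref{prop:closedmodification} (compatibility of $\delta^*$ with $\PD$ for the \emph{closed} modification). So the left square is essentially a restatement of the closed-modification case; the only thing to check is that the embedding $H^{p,0}(W)\hookrightarrow H^{p,0}(V)$ of Lemma \ref{lem:shortexactMat} is indeed induced by $\delta^*$ and agrees under $\overline\delta{}^*$ with the closed-modification isomorphism restricted to the open part — this follows because both are pullback by the same linear projection.

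For the \emph{right square}, the main work lies. Here I must unwind the sign $(-1)^{n-1}$ and the construction of $\beta\in\LS^{p-1,0}(\overline V)$ extending $\langle\alpha;e_i\rangle_p$, as described in the paragraph before the lemma. Take a closed $(p,0)$-form $\alpha$ on $V$ representing a class in $H^{p,0}(V)$. On the top-right we get $\langle\alpha;e_i\rangle_p$, which extends to $\beta\in\LS^{p-1,0}(\overline V)$, restricted to $D=\overline V_r$; then apply $(-1)^{n-1}\PD_D$. On the other route, $\PD_V(\alpha)\in H^{n-p,n}_c(V)^*$, and $g^*$ is dual to extension-by-zero $H^{n-p,n-1}_c(D)\to H^{n-p,n}_c(\overline V)$ composed with $(\delta^*)^{-1}$ to $H^{n-p,n}_c(W)$; equivalently, thinking of $D\subset\overline V$, the map $g$ on homology sends a compactly supported form $\omega$ on $D$ to a form on $\overline V$, but since $D$ has codimension one in $\overline V$ this is really the connecting map in the sequence $0\to\AS^{\bullet}_{\overline V,c}(V)\to\AS^\bullet_{\overline V,c}(\overline V)\to\AS^\bullet_{D,c}(D)\to 0$. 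So I need: for $\omega\in\AS^{n-p,n-1}_c(D)$, choosing a lift $\tilde\omega\in\AS^{n-p,n-1}_c(\overline V)$, the connecting homomorphism produces $d''\tilde\omega\in\AS^{n-p,n}_c(V)$, and I must show
\[
\varepsilon\int_V \alpha\wedge d''\tilde\omega \;=\; (-1)^{n-1}\,\varepsilon'\!\int_D \langle\alpha;e_i\rangle_p\big|_D \wedge\, \omega,
\]
with $\varepsilon,\varepsilon'$ the Poincaré signs in dimensions $n$ and $n-1$. The hard part will be this identity: it is a local residue/Stokes computation along the divisor $D=\{x_i=-\infty\}$. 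The strategy is to use the Leibniz rule $d''(\alpha\wedge\tilde\omega)=d''\alpha\wedge\tilde\omega+(-1)^p\alpha\wedge d''\tilde\omega=(-1)^p\alpha\wedge d''\tilde\omega$ since $d''\alpha=0$, then apply Stokes' theorem on $\overline V$ (Theorem \ref{thm:StokesTropical}) — but $\alpha\wedge\tilde\omega$ does not have compact support in the $x_i$-direction as we approach $-\infty$, so instead one works on the truncation $\overline V\cap\{x_i\ge -T\}$ and lets $T\to\infty$, or equivalently one computes in a product chart $\R^{r-1}\times\T$ near $D$ where $\alpha$ is, up to lower-order terms killed by the contraction, of the form $d'x_i\wedge(\text{form pulled back from }D)$; the boundary term of the truncated Stokes computation at $x_i=-T$ converges to the integral over $D$ of the contraction $\langle\alpha;e_i\rangle_p$ wedged with $\omega$, and tracking the reordering of $d'$- and $d''$-factors against the definition of the contraction in the $p$-th slot and against Definition \ref{def:integral}(ii) produces precisely the sign $(-1)^{n-1}$. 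I would organize this as: (1) reduce to a product chart and to forms supported near $D$ by a partition of unity; (2) reduce $\alpha$ modulo forms whose contraction by $e_i$ vanishes on $D$ (these contribute nothing to either side, by dimension and by the explicit description of $\LS^{p}$ on basic opens in Proposition \ref{lem:Lp}); (3) do the one-variable truncated Stokes computation in the $x_i$-direction and identify the limit; (4) match signs. Exactness of both rows is already given, so once both squares commute the lemma is proved.
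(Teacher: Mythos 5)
Your plan is, in substance, the paper's own proof: the left square is reduced via Proposition~\ref{prop:closedmodification} to the closed modification, where commutativity is exactly the projection--formula computation you cite, and the right square comes down to the identity $(-1)^p\int_V\alpha\wedge d''(l(\beta))=(-1)^{n+p}\int_D\langle\alpha;e_i\rangle_p\wedge\beta$, proved by the Leibniz rule plus Stokes' theorem on a truncation of $V$, with the boundary term identified as the contraction integral over $D$. The one genuine difference is the truncation locus: you cut along the horizontal hyperplane $\{x_i\geq -T\}$ and pass to the limit $T\to\infty$, whereas the paper cuts along the shifted graph $\Gamma_{P-1}(W)$ of the modification function. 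The paper's cut buys an exact isomorphism of rational polyhedral spaces $D'\cong D$ under $\delta$ and, after choosing the lift to equal $\delta^*\beta$ below the cut, needs no limit; your version instead needs the compatibility condition of Definition~\ref{def:forms}~$ii)$, which forces any compactly supported lift $\tilde\omega$ to agree with $\overline{\delta}^*\omega$ on a neighborhood of $D$ in $\overline V$ --- this is what makes the boundary term stabilize for large $T$ and also what kills the contribution of the region below the cut (there $d''\tilde\omega=\overline{\delta}^*d''\omega=0$), so it should be invoked explicitly. Two small corrections: your opening description of the bottom row swaps the two maps (the left map is dual to extension by zero composed with $(\delta^*)^{-1}$, and $g^*$ is dual to the connecting homomorphism); you silently fix this when you compute, but the first formulation should go. And your step~(2), discarding the part of $\alpha$ whose contraction by $e_i$ vanishes on $D$, is unnecessary and, as phrased (``contributes nothing to either side''), presupposes for the left-hand side exactly what the Stokes computation proves; the computation handles all of $\alpha$ at once because $\langle l(\beta);e_i\rangle|_{D'}=0$ lets you move the contraction in $\langle\alpha\wedge l(\beta);e_i\rangle_n$ entirely onto $\alpha$.
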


\begin{proof}
Note that by Proposition \ref{prop:closedmodification} the diagram is equivalent to the one obtained by replacing $W$ by $\overline{V}$. Then the fact that the first square commutes is immediate. 
The map $g \colon H^{n-p,n-1}_c(D) \rightarrow H^{n-p,n}_c(V)$ is the boundary operator in a long exact cohomology sequence. We recall its construction. 
For a closed superform $\beta \in \AS^{n-p,n-1}_c(D)$, take any lift $l(\beta) \in \AS^{n-p,n-1}_c(\overline{V})$ such that $l(\beta)|_D = \beta$. Then $d''(l(\beta))$ restricts to $0$ on $D$ and so it is  a superform with compact support on $V$. Then $g(\beta)$ is given by the class of $d''(l(\beta))$ in $H^{n-p,n}_c(V)$.
As usual this does not depend on the choice of $l(\beta)$. 
We have to show that for all closed forms $\alpha \in \AS^{p,0}(V)$, $\beta \in \AS^{n-p,n-1}_c(V)$, and a lift $l(\beta) \in  \AS^{n-p,n-1}_c(\overline{V})$ that
we have 
\begin{align*}
(-1)^p \int \limits_V \alpha \vedge d''(l(\beta)) = (-1)^{(p-1)} \int \limits_D \langle \alpha; e_i \rangle_p \vedge \beta.
\end{align*}
In the above integral  $e_i$ is the coordinate direction of the modification. 
Let $P$ be the piecewise linear function of the modification and $P' = P-1$. The graph of $P'$ divides $V$ into two subsets, one living above the graph and the other one below. Both of these subsets are the support of some polyhedral complexes, which we denote by $\CS_1$ and $\CS_2$, respectively.  Equip all facets of both polyhedral complexes $\CS_1$ and $\CS_2$ with weight one. Note that $\delta(|\CS_2|) \subset D$. We find a lift $l(\beta) \in \AS^{n-p,n-1}_c(\overline{V})$ such that $l(\beta)|_{|\CS_2|} = (\delta|_{|\CS_2|})^*(\beta)$. Then we have
\begin{align*}
\int \limits_V \alpha \vedge d''(l(\beta)) =  \int \limits_{\CS_1} \alpha \vedge d''(l(\beta)) + \int \limits_{\CS_2} \alpha \vedge d''(l(\beta)) =  \int \limits_{\CS_1} \alpha \vedge d''(l(\beta)).
\end{align*}

By Stokes' theorem in the version \cite[Proposition 3.5]{Gubler} 
\footnote{It was communicated to us by the author of \cite{Gubler} that Proposition 3.5 in \emph{loc.~cit.} contains a sign mistake. 
We use the corrected version in (\ref{Stokes boundary version}).} 
and the Leibniz rule we have
\begin{align} \label{Stokes boundary version}
\int \limits_{\CS_1} \alpha \vedge d''(l(\beta)) =  (-1)^p \int \limits_{\del \CS_1} \alpha \vedge l(\beta).
\end{align}
It follows from the proof of \cite[Theorem 3.8]{Gubler} that the boundary integral of $\alpha \vedge l(\beta)$ over balanced codimension one faces vanishes.  We further have that the unbalanced faces of $\CS_1$ are precisely the ones in the polyhedral subspace $D' := \CS_1 \cap \Gamma_{P'}(W) = \CS_2 \cap \Gamma_{P'}(W)$. 
The facets of the polyhedral subspace $D'$ are equipped with weight one. 
Thus we obtain
\begin{align*}
\int \limits_{\del \CS_1} \alpha \vedge l(\beta)  =  \int \limits_{D'} \langle \alpha \vedge l(\beta); e_i \rangle_1
= (-1)^{p-1} \int \limits_{D'} \langle \alpha \vedge l(\beta); e_i \rangle_{p}.
\end{align*}
Since $l(\beta)|_{D'}  = (\delta|_{D'})^*(\beta)$ and $\delta(e_i) = 0$, we have that $\langle l(\beta); e_i \rangle_p|_{D'} = 0$ and therefore
\begin{align*}
\int \limits_{D'} \langle \alpha \vedge l(\beta), e_i \rangle_p = 
\int \limits_{D'}  \langle \alpha; e_i \rangle_p \vedge l(\beta).
\end{align*}
Altogether, we obtain
\begin{align*}
(-1)^p \int \limits_V \alpha \vedge d''(l(\beta)) =  (-1)^{p-1} \int \limits_{D'} \langle \alpha; e_i \rangle_p \vedge l(\beta).
\end{align*}
Denote by $F \colon W \rightarrow \Gamma_{P'}(W)$ the map into the  graph of the function $P'$. Then there exist polyhedral structures $\DS$ on $D$ and $\DS'$ on $D'$, such that for each facet $\sigma$ of $\DS$ the restriction $F|_\sigma$ is linear, the image $F(\sigma)$ is a facet of $\DS'$, and each facet of $\DS'$ is of this form. Then the inverse of $F|_\sigma$ is given by $\delta|_{\sigma'}$. Thus $\delta|_{\sigma'}$ is an isomorphism of rational polyhedra for each $\sigma' \in \DS'$. Since we have that $\delta^*$ preserves $\langle \alpha; e_i \rangle_p$ and $(\delta|_{D'})^* \beta = l(\beta)|_{D'}$ we obtain
\begin{align*}
\int \limits_{D'} \langle \alpha; e_i \rangle_p \vedge l(\beta) = \int \limits_D \langle \alpha; e_i \rangle_p \vedge \beta,
\end{align*}
which concludes the proof. 
\end{proof}

\begin{Prop}\label{Prop:PDforMatroidalRr}
Let   $V \subset \R^r$ be a matroidal cycle.  Then $V$ has Poincar\'e duality. 
\end{Prop}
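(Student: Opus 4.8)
The plan is to proceed by induction on the dimension $r$ of the ambient space, exactly as in Lemma \ref{lem:vanishing}. The base case is $r = n$, where $V = \R^n$, and Poincar\'e duality for $H^{p,q}(\R^n) = \bigwedge^p{\R^n}^* \otimes H^q(\R^n)$ reduces to the classical Poincar\'e duality for de Rham cohomology of $\R^n$ (with the extra factor $\bigwedge^p{\R^n}^*$ paired with itself), which is standard. For the inductive step, suppose $r > n$. By Proposition \ref{prop:matroidalProj} there is a coordinate direction $e_i$ giving an open matroidal tropical modification $\delta\colon V \to W$ with divisor $D \subset W$, where $W, D$ are matroidal cycles in $\R^{r-1}$; by the inductive hypothesis both $W$ and $D$ have Poincar\'e duality.

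The core of the argument is to assemble a commutative ladder between two long exact sequences and apply the five lemma. By Lemma \ref{lem:vanishing}, $H^{p,q}(V)$, $H^{p,q}(W)$, $H^{p,q}_c(D)$ vanish outside a single value of $q$ (namely $q = n$ for the compactly supported cohomology of $V$ and $W$, and $q = n-1$ for $D$), so the long exact sequence from Proposition \ref{Prop:SeqComCoh}, combined with the modification isomorphism $H^{p,q}_c(\overline V) \cong H^{p,q}(W)$ of Proposition \ref{prop:closedmodification}, collapses to the short exact sequence
\begin{align*}
0 \to H^{n-p,n}_c(W)^* \to H^{n-p,n}_c(V)^* \xrightarrow{g^*} H^{n-p,n-1}_c(D)^* \to 0
\end{align*}
on the dual side (and similar collapsing for all $q$). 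On the other side, Lemma \ref{lem:shortexactMat} provides the short exact sequence $0 \to H^{p,0}(W) \to H^{p,0}(V) \xrightarrow{\langle\cdot\,;e_i\rangle_p} H^{p-1,0}(D) \to 0$ for the $q = 0$ cohomology, and for $q > 0$ all the relevant groups $H^{p,q}(V)$ etc.\ need to be identified with shifts of divisor cohomology via the same modification sequences. I would first handle the case $q = 0$: Lemma \ref{lem:commutativitycomparison} shows precisely that the $\PD$ maps form a commutative ladder between the Lemma \ref{lem:shortexactMat} sequence and the collapsed Proposition \ref{Prop:SeqComCoh} sequence, with vertical maps $\PD$ on $W$ and $V$ (isomorphisms by induction on $W$, to be deduced for $V$) and $\pm\PD$ on $D$ (an isomorphism by induction). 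The five lemma then gives that $\PD\colon H^{p,0}(V) \to H^{n-p,n}_c(V)^*$ is an isomorphism for all $p$.

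For $q > 0$, I would again use the long exact sequences of Proposition \ref{Prop:SeqComCoh} applied to the pair $(V, W, D)$ — more precisely to the open modification $U = V \setminus D' \to W$, using that $V = \overline V \cap \R^r$ and the vanishing from Lemma \ref{lem:vanishing} — together with the closed modification isomorphism, to express $H^{p,q}(V)$ in terms of $H^{p,q-1}(D)$ for all $q \geq 1$; dually the same structure on compact supports. One checks, as in Lemma \ref{lem:commutativitycomparison}, that these connecting maps are again intertwined by $\PD$ up to sign, so that the isomorphism for $D$ (inductive hypothesis) propagates to $V$ by the five lemma in each degree. The main obstacle, and the step that requires the most care, is precisely the sign bookkeeping and the verification that the connecting homomorphisms in the (co)homology long exact sequences are genuinely dual to one another under the integration pairing — this is exactly the content that Lemma \ref{lem:commutativitycomparison} establishes for $q=0$, and the analogous statements for higher $q$ follow the same Stokes-theorem-plus-Leibniz computation but must be tracked through the modification isomorphisms. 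Once the commutativity of the ladder is in hand, the five lemma finishes the proof.
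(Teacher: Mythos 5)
Your treatment of the essential case $q=0$ is exactly the paper's argument: induction on the ambient dimension $r$ via the matroidal tropical modification of Proposition \ref{prop:matroidalProj}, the commutative ladder of Lemma \ref{lem:commutativitycomparison} between the sequence of Lemma \ref{lem:shortexactMat} and the collapsed sequence of Proposition \ref{Prop:SeqComCoh}, and the five lemma. The base case $r=n$, $V=\R^n$ is also handled the same way.

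Where you diverge is the case $q>0$, and there your sketch has a gap that is also unnecessary to fill. You propose to express $H^{p,q}(V)$ in terms of $H^{p,q-1}(D)$ for $q\geq 1$ via further long exact sequences and to verify that the connecting maps are intertwined by $\PD$ ``as in Lemma \ref{lem:commutativitycomparison}''; but no such commutativity statements are established anywhere in the paper for $q>0$, and proving them would require genuinely new Stokes-type computations. The point you are missing is that none of this is needed: $V$ is the support of a fan, hence a basic open subset of itself (the origin is the unique minimal polyhedron), so Proposition \ref{prop:acyclic} together with Corollary \ref{kor:PLemmaX} gives $H^{p,q}(V)=0$ for all $q>0$, while Lemma \ref{lem:vanishing} gives $H^{n-p,n-q}_c(V)=0$ for all $q\neq 0$. (Note that Lemma \ref{lem:vanishing} concerns only compactly supported cohomology; the vanishing of $H^{p,q}(V)$ for $q>0$ comes from Proposition \ref{prop:acyclic}, which you do not invoke.) Thus both sides of the $\PD$ map vanish for $q>0$ and the map is trivially an isomorphism there; the five-lemma argument is needed only in degree $q=0$. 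With that observation substituted for your $q>0$ discussion, your proof is complete and coincides with the paper's.
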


\begin{proof}
Let $n$ be the dimension of $V$. We perform induction on $r$. The base case $r = 0$ is obvious. \\
For the induction step, we have two cases, these being $n= r$ and $n < r$. If $n = r$, then  $V = \R^n$ and  we have 
$$H^{p,q}(\R^n) = \bigwedge^p {\R^n}^* \otimes H^q(\R^n)
 \quad  \text{and} \quad
  H_c^{p,q}(\R^n) = \bigwedge^p {\R^n}^* \otimes H_c^q(\R^n),$$ 
where $H^q$, respectively $H^q_c$, denote the usual de Rham cohomology. Therefore,  $H^{p,q}(\R^n) = 0$ and $H^{n-p,n-q}_c(\R^n) = 0$ unless $q = 0$. Otherwise $H^{p,0}(\R^n) = \bigwedge^p {\R^n}^*$, $H^{n-p,n}_c(\R^n) = \bigwedge^{n-p} {\R^n}^*$, and the $\PD$ map is just $(-1)^p$ times the map induced by the canonical pairing $\bigwedge^p {\R^n}^* \times \bigwedge^{n-p} {\R^n}^* \rightarrow \bigwedge^n {\R^n}^* \cong \R$. Since this pairing is non-degenerate the $\PD$ map is an isomorphism. 

If $n < r$, then by Proposition \ref{prop:acyclic} and Lemma \ref{lem:vanishing} the only non-trivial case to check is when $q = 0$. In other words, that $\PD \colon H^{p,0}(V) \rightarrow H^{n-p,n}_c(V)^*$ is an isomorphism. 
Consider an open matroidal tropical modification $\delta \colon V \to W$ along a matroidal  divisor $D$.
 Now $D$ and $W$ have $\PD$ by the induction hypothesis, so that in the commutative diagram from Lemma \ref{lem:commutativitycomparison} the vertical arrows on the left and right are isomorphisms. By the five lemma we obtain $\PD$ for $V \subset \R^r$ and the proposition is proven. 
\end{proof}

The next two lemmas help to prove Proposition \ref{Prop:PDforMatroidaltimesT}, which is analogous to Proposition \ref{Prop:PDforMatroidalRr} but for spaces of the form $V \times \T^r$ where $V$ is a matroidal cycle. 
We first relate the cohomologies of $V \times \T^r$, $V \times \R^r$, and $V$ by way of  an exact sequence.

\begin{lem} \label{lem:seqnoncompact}
Let $Y =  V \times \T^r$ where $V$ is the support of a polyhedral fan. Then we have a short exact sequence
\begin{align*}
\begin{xy}
\xymatrix{
0 \ar[r] & H^{p,0}(Y \times \T) \ar[r]  & H^{p,0}(Y \times \R) \ar[r]^{\langle \ \cdot \ ; e_i\rangle_p} & H^{p-1,0}(Y)  \ar[r] &0 
}
\end{xy}
\end{align*}
where 
$e_i$ is the coordinate of $\R$ in 
$Y \times \R$.
\end{lem}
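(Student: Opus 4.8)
The plan is to compute all three groups explicitly by hand and then recognise the sequence as the split short exact sequence produced by adjoining the extra coordinate. Since $H^{p,0}(Z)=H^0(Z,\LS^p)=\LS^p(Z)$ for any polyhedral space $Z$, the statement is purely about global sections of the sheaves $\LS^p$ over $Y\times\T=V\times\T^{r+1}$, over $Y\times\R=V\times\T^r\times\R$ and over $Y=V\times\T^r$. As $V$ is the support of a fan, all three are basic open subsets of polyhedral subspaces of tropical affine spaces: each is the intersection of the relevant product polyhedral complex with an open cube, and the product cone structure has a unique minimal face (the apex of $V$ together with the deepest strata of the $\T$-factors, and all of $\R$ for the middle one). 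The middle map $\langle\,\cdot\,;e_i\rangle_p$ is to be read exactly as in the discussion accompanying Lemma \ref{lem:shortexactMat}: contract a closed $(p,0)$-form by $e_i$, extend the resulting closed $(p-1,0)$-form uniquely to the closure $Y\times\T=\overline{Y\times\R}$, and restrict it to the boundary stratum $Y\times\{-\infty\}$, which is canonically identified with $Y$.

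Next I would apply Proposition \ref{lem:Lp}. Write $\sigma$ for the apex of $V$. The minimal faces of $Y\times\T$ and of $Y$ sit in the deepest strata, so the $\T$-factors contribute no tangent directions and every relevant face has tangent space $\Linear(\tau)$ for a cone $\tau$ of $V$; hence $H^{p,0}(Y\times\T)=\big(\sum_{\sigma\prec\tau}\bigwedge^p\Linear(\tau)\big)^*=\mathbf{F}^p(\sigma)$ and likewise $H^{p-1,0}(Y)=\mathbf{F}^{p-1}(\sigma)$. For $Y\times\R$ the minimal face additionally carries the line $\R e_i$, so the relevant faces have tangent space $\Linear(\tau)\oplus\R e_i$; using $\bigwedge^p(\Linear(\tau)\oplus\R e_i)=\bigwedge^p\Linear(\tau)\oplus\big(\bigwedge^{p-1}\Linear(\tau)\big)\wedge e_i$ and summing over the cones of $V$ yields a canonical decomposition $H^{p,0}(Y\times\R)=\mathbf{F}^p(\sigma)\oplus\mathbf{F}^{p-1}(\sigma)$.

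Then I would identify the two maps under these isomorphisms. By the description of restriction maps in Proposition \ref{lem:Lp} (the case in which the smaller basic open set has a minimal face of strictly lower sedentarity), the restriction $H^{p,0}(Y\times\T)\to H^{p,0}(Y\times\R)$ is dual to the composite of the projection killing $e_i$ with the relevant inclusion, which is precisely the canonical inclusion of the first summand $\mathbf{F}^p(\sigma)\hookrightarrow\mathbf{F}^p(\sigma)\oplus\mathbf{F}^{p-1}(\sigma)$; in particular it is injective. For $\alpha\in\LS^p(Y\times\R)$ corresponding to a pair $(\phi,\psi)$, the contracted form $\langle\alpha;e_i\rangle_p$ has vanishing further contraction by $e_i$, so, viewed in $\LS^{p-1}(Y\times\R)=\mathbf{F}^{p-1}(\sigma)\oplus\mathbf{F}^{p-2}(\sigma)$, it lies in the first summand; its unique extension to $Y\times\T$ restricts on $Y\times\{-\infty\}\cong Y$ to $\pm\psi\in\mathbf{F}^{p-1}(\sigma)$. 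Thus $\langle\,\cdot\,;e_i\rangle_p$ is, up to sign, the projection onto the second summand: surjective, with kernel the first summand, i.e. the image of the restriction map. Hence the sequence is exact — it is the split short exact sequence $0\to\mathbf{F}^p(\sigma)\to\mathbf{F}^p(\sigma)\oplus\mathbf{F}^{p-1}(\sigma)\to\mathbf{F}^{p-1}(\sigma)\to 0$.

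The only real work is the bookkeeping: tracking which cotangent directions survive in each minimal stratum, and checking that the ``contract, extend to the closure, restrict to the boundary'' operation matches the algebraic projection onto the $e_i$-component. I expect this matching to require the most care, though it presents no genuine difficulty; unlike in Lemma \ref{lem:shortexactMat}, there is no matroid-theoretic input here, the relevant direct-sum decomposition being simply the one induced by adding the coordinate $e_i$.
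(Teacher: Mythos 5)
Your proposal is correct and follows essentially the same route as the paper: reduce to $Y=V$ via the explicit computation of $\LS^p$ on basic open sets in Proposition \ref{lem:Lp}, exhibit the exact sequence $0\to\mathbf{F}_{p-1}(\sigma)\xrightarrow{\wedge e_i}\mathbf{F}_p(\sigma\times\R)\to\mathbf{F}_p(\sigma)\to 0$ of multi-tangent spaces, and dualize. The only cosmetic difference is that you record the middle term as a direct sum (hence a split sequence) and spend more words matching the ``contract, extend, restrict'' map with the dual of $\wedge e_i$, whereas the paper verifies exactness of the tangent-space sequence directly.
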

\begin{proof}
We use the explicit calculation in Proposition  \ref{lem:Lp}. First this shows that none of the cohomology groups in the statement change when we replace $Y$ by $V$, thus we assume $Y = V$. 
We begin by showing that the sequence 
\begin{align} \label{sequencesum}
\begin{xy}
\xymatrix{
0 \ar[r] & \sum \limits_{\sigma \in Y} \bigwedge^{p-1} \Linear(\sigma) \ar[r]^{\vedge e_i} &\sum \limits_{\sigma \in Y} \bigwedge^p \Linear(\sigma \times \R) \ar[r] & \sum \limits_{\sigma \in Y} \bigwedge^p  \Linear(\sigma)  \ar[r] & 0	
}
\end{xy}		
\end{align}
is exact. The first map is clearly injective and the last map is clearly surjective.  For exactness in the middle notice the composition of the maps is certainly zero and that any element $v \in \sum \limits_{\sigma \in Y} \bigwedge^p \Linear(\sigma \times \R)$ can be written as $v = \sum \limits_{\sigma \in Y} v_\sigma + \left( \sum \limits_{\sigma \in Y} v'_\sigma \right) \vedge e_i$ for $v_\sigma \in \bigwedge^p \Linear(\sigma)$ and $v'_\sigma \in \bigwedge^{p-1} \Linear(\sigma)$. If $v$ maps to zero then it is of the form $\left( \sum \limits_{\sigma \in Y} v'_\sigma \right)\wedge e_i$
 and thus in the image of $\vedge e_i$. This proves exactness of that sequence.  
 
Identifying $\LS^p$ with $H^{p,0}$ and dualizing the sequence (\ref{sequencesum}) completes the proof. 
\end{proof}

\begin{lem}\label{lem:commuteProd} 
Let $Y = V \times \T^r $ be pure $n$-dimensional, where $V \subset \R^s$ is a matroidal cycle, then the following diagram
\begin{align*}
\begin{xy}
\xymatrix
{
 H^{p,0}(Y \times \T) \ar[r] \ar[d]^{\PD} & H^{p,0}(Y \times \R) \ar[r]^{\langle \ \cdot \  ; e_i\rangle_p} \ar[d]^{\PD} & H^{p-1,0}(Y) \ar[d]^{\PD}  \\
 H^{n-p+1,n+1}_c(Y \times \T)^* \ar[r] & H^{n-p+1,n+1}_c(Y \times \R)^* \ar[r]^{ \ \ \ \  g^*} & H^{n-p+1,n}_c(Y)^*,
}
\end{xy}
\end{align*}
which is obtained by the sequences in Proposition \ref{Prop:SeqComCoh} and Lemma \ref{lem:seqnoncompact}, commutes.
\end{lem}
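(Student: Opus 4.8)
The plan is to mimic the proof of Lemma~\ref{lem:commutativitycomparison}: the left-hand square will be immediate from the definition of integration, while the right-hand square will reduce, after a convenient choice of lift, to an elementary Fubini-type computation together with the fundamental theorem of calculus. Write $t$ for the coordinate on the $\R$-factor of $Y\times\R$, so that the vector $e_i$ of the statement is $\partial/\partial t$, and let $q\colon Y\times\T\to Y$ be the (extended affine) projection forgetting the last coordinate.

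First I would record the identifications needed to read the diagram. By Proposition~\ref{lem:Lp} the spaces $Y$, $Y\times\R$ and $Y\times\T$ are basic open subsets, so $H^{p,0}$ of each is the corresponding space of \emph{constant} superforms $\LS^p$; under these identifications the top row is precisely the dual of the exact sequence~(\ref{sequencesum}), its left map sending a constant form $\alpha$ to $q^*\alpha$ (equivalently, to its restriction to $Y\times\R$) and its right map being the contraction $\langle\,\cdot\,;e_i\rangle_p$, whose value, a constant form not involving $d't$, is canonically a form on $Y$. The bottom row is the dual of the long exact sequence of Proposition~\ref{Prop:SeqComCoh} with $\Omega=Y\times\T$, $D=Y\times\{-\infty\}=Y$ and $U=Y\times\R$; in particular its left map is dual to extension by zero $j_!$, and the connecting map $g\colon H^{n-p+1,n}_c(Y)\to H^{n-p+1,n+1}_c(Y\times\R)$ sends a closed form $\beta$ to the class of $d''(l(\beta))$ for any compactly supported lift $l(\beta)\in\AS^{n-p+1,n}_c(Y\times\T)$ with $l(\beta)|_Y=\beta$.

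For the left square, let $\alpha\in\LS^p(Y\times\T)$ be constant and $\eta\in\AS^{n-p+1,n+1}_c(Y\times\R)$. Both composites send $\alpha$ to the functional $\eta\mapsto(-1)^p\int_{Y\times\T}\alpha\wedge j_!\eta$: indeed $\alpha$ restricts to $q^*\alpha$ on $Y\times\R$ and $\alpha\wedge j_!\eta$ vanishes on $Y\times\{-\infty\}$, while by Lemma~\ref{lem:intStrata} a top-degree compactly supported form on $Y\times\T$ is supported on the top-dimensional stratum, which lies in $Y\times\R$, so its integral over $Y\times\T$ equals its integral over $Y\times\R$. Hence the left square commutes.

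For the right square I would use the explicit lift $l(\beta):=\rho(t)\,q^*\beta$, where $\rho\colon\T\to[0,1]$ is a smooth cutoff equal to $1$ near $-\infty$ and with compact support (such functions exist by the argument in the proof of Lemma~\ref{lem:partitionsofunity}); this is compactly supported and restricts to $\beta$ on $Y$. Since $\beta$ is $d''$-closed one computes $d''l(\beta)=\rho'(t)\,d''t\wedge q^*\beta$, supported in $Y\times\R$ and away from $-\infty$, so it represents $g(\beta)$. Writing the constant form $\alpha$ as $\alpha_0+d't\wedge\alpha_1$ with $\alpha_0,\alpha_1$ not involving $d't$, the term $\alpha_0$ contributes nothing to $\alpha\wedge d''t\wedge q^*\beta$, since $\alpha_0\wedge q^*\beta$ is a superform of $d'$-degree $n+1$ involving only the $n$ tangent directions of $Y$ and hence vanishes; because $\alpha_1$ is constant the integrand then splits as a product in the $Y$- and $t$-directions, integration factors, and one obtains $\int_{Y\times\R}\alpha\wedge d''l(\beta)=\pm\bigl(\int_{\R}\rho'(t)\,dt\bigr)\int_{Y}\langle\alpha;e_i\rangle_p\wedge\beta=\mp\int_{Y}\langle\alpha;e_i\rangle_p\wedge\beta$, using $\int_{\R}\rho'(t)\,dt=-1$ (as $\rho$ equals $1$ near $-\infty$ and $0$ for $t$ large). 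Comparing this with the signs $\varepsilon$ of Definition~\ref{def:PoincareMap} and the factor $(-1)^n$ on the right-hand vertical arrow yields the required identity $\int_{Y\times\R}\alpha\wedge d''l(\beta)=(-1)^{n-1}\int_{Y}\langle\alpha;e_i\rangle_p\wedge\beta$, so the right square commutes. I expect the only genuine difficulty to be the sign bookkeeping in this last step---the $\varepsilon$-factors of $\PD$, the reorderings among $d't$, $d''t$ and $\alpha_1$, the sign relating $\langle\alpha;e_i\rangle_p$ to the $d't$-component of $\alpha$, and the orientation convention hidden in the product decomposition of the integral---which must be carried out consistently; the product structure of $Y\times\R$, as opposed to the modification appearing in Lemma~\ref{lem:commutativitycomparison}, is precisely what allows Stokes' theorem to be replaced here by this elementary factorization of the integral.
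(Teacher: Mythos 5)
Your argument is correct and follows the same overall skeleton as the paper's (verify the left square from the definition of the integral on $\T$-strata, then reduce the right square to the identity $\int_{Y\times\R}\alpha\wedge d''(l(\beta))=(-1)^{n-1}\int_Y\langle\alpha;e_i\rangle_p\wedge\beta$ for a suitable lift $l(\beta)$), but it establishes that key identity by a genuinely different route. The paper's proof is a one-line reduction: it reruns the argument of Lemma \ref{lem:commutativitycomparison} with $Y$ in place of $D$, $Y\times\R$ in place of $V$, $Y\times\T$ in place of $\overline{V}$, and $P'$ a constant function --- that is, it cuts $Y\times\R$ along $Y\times\{c\}$, takes a lift equal to $q^*\beta$ on the lower half, applies the polyhedral Stokes' theorem \cite[Proposition 3.5]{Gubler} on the upper half, and identifies the resulting boundary integral over $Y\times\{c\}$ with the integral over $Y$ via the graph map. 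You instead choose the explicit lift $\rho(t)\,q^*\beta$ (which is legitimate: $\rho\equiv 1$ near $-\infty$ guarantees the compatibility condition along the stratum $Y\times\{-\infty\}$ and the correct restriction to $Y$), compute $d''(l(\beta))$ directly, and evaluate the integral by a Fubini factorization with $\int_\R\rho'=-1$. Your route buys self-containedness and avoids both Stokes' theorem and the pushforward/graph identifications, exploiting exactly the product structure that is absent in the modification setting; the paper's route buys economy of exposition. Your target sign $(-1)^{n-1}$ is the correct one (it is the paper's $(-1)^{\dim}$ from Lemma \ref{lem:commutativitycomparison} under the shift $\dim=n+1$, combined with the $\varepsilon$-factors of Definition \ref{def:PoincareMap}). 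The one thing you leave implicit is the bookkeeping of the $(-1)^{n-p+1}$ produced when $d''$ passes $d''t$ across $d'x_K$ and the reordering of $d't$, $d''t$ against $\alpha_1$ and $q^*\beta$; since you acknowledge this and state the correct final sign, this is a presentational rather than a mathematical gap.
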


\begin{proof}
The proof follows exactly along the lines of the proof of the commutativity of the diagram in Lemma \ref{lem:commutativitycomparison} for tropical modifications with $Y$ replacing $D$, $Y \times \R$ replacing $V$, $Y \times \T$ replacing $\overline{V}$ and $P'$ being any constant function. 
\end{proof}

\begin{Prop} \label{Prop:PDforMatroidaltimesT}
Let $Y = V \times\T^r$ for a matroidal cycle $V \subset \R^s$. Then $Y$ has Poincar\'e duality.
\end{Prop}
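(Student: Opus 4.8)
The plan is to run an induction on $r$, the number of $\T$-factors, with the base case $r=0$ being exactly Proposition \ref{Prop:PDforMatroidalRr}. So suppose $r \geq 1$ and write $Y = Y' \times \T$ where $Y' = V \times \T^{r-1}$, which has Poincar\'e duality by the induction hypothesis. First I would record, exactly as in the proof of Lemma \ref{lem:vanishing}, that $H^{p,q}_c(Y) = 0$ and $H^{p,q}(Y) = 0$ unless $q$ takes a single value. For the ordinary cohomology: by Proposition \ref{lem:Lp} the sheaf $\LS^p$ on $Y$ has the same sections on basic opens as on $Y'$ (the extra $\T$-factor contributes nothing since its minimal stratum pushes to $-\infty$), and combined with Proposition \ref{prop:acyclic} this gives $H^{p,q}(Y) = 0$ for $q \neq 0$, with $H^{p,0}(Y) \cong H^{p,0}(Y')$. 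For compact support one uses the exact sequence of Proposition \ref{Prop:SeqComCoh} applied to $\Omega = Y$, $D = Y' \times \{-\infty\} = Y'$, $U = Y' \times \R$, together with the fact (from the classical K\"unneth/compact-support computation, or by an argument parallel to Lemma \ref{lem:vanishing}) that $H^{p,q}_c(Y'\times \R) = 0$ unless $q = n$ and $H^{p,q}_c(Y') = 0$ unless $q = n-1$; the long exact sequence then forces $H^{p,q}_c(Y) = 0$ for $q \neq n+1$ — wait, one must be careful with the dimension bookkeeping here, since $\dim(Y) = \dim(Y')+1 = n$ means $\dim(Y') = n-1$, and the relevant concentration degree is $q = n$ for $H^{\bullet,\bullet}_c(Y)$. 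So the only case requiring work is $q = 0$ for $H^{p,\bullet}$ against $q=n$ for $H^{n-p,\bullet}_c$.

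The heart of the argument is then the commutative diagram of Lemma \ref{lem:commuteProd}, which fits the relevant $(p,0)$-cohomology groups and their compact-support duals into a morphism of short exact sequences: the top row is the sequence of Lemma \ref{lem:seqnoncompact} relating $H^{p,0}(Y'\times\T)$, $H^{p,0}(Y'\times\R)$ and $H^{p-1,0}(Y')$, and the bottom row is the dual of the compact-support sequence of Proposition \ref{Prop:SeqComCoh}, with vertical maps given (up to sign) by $\PD$. By the induction hypothesis, $\PD$ is an isomorphism on $Y'$ (the right vertical arrow, with an appropriate sign $(-1)^n$) and on $Y' \times \R$ (the middle vertical arrow — note $Y'\times\R = V \times \R \times \T^{r-1}$ is again of the form $(\text{matroidal cycle}) \times \T^{r-1}$, since $V \times \R$ is matroidal, so this too is covered by the induction hypothesis at the previous stage, or directly by Proposition \ref{Prop:PDforMatroidalRr} when $r=1$). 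The five lemma then yields that the left vertical arrow $\PD \colon H^{p,0}(Y'\times\T) = H^{p,0}(Y) \to H^{n-p,n}_c(Y)^*$ is an isomorphism, completing the induction.

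The step I expect to be the main obstacle is verifying that the middle vertical map in the diagram is genuinely an isomorphism within the induction: one must observe that $V \times \R$ is a matroidal cycle (its Bergman fan is the Bergman fan of $V$ times a line, corresponding to adding a loop or a coloop to the matroid — this needs a short justification), so that $Y' \times \R = (V\times\R)\times\T^{r-1}$ falls under the inductive hypothesis for the parameter $r-1$; organizing the induction so that this is available requires stating the inductive hypothesis for all matroidal cycles simultaneously rather than a fixed one. A secondary technical point is checking the sign $(-1)^n$ on the rightmost vertical arrow and the exactness/identification of the two rows, but this is handled verbatim by the citation to the proof of Lemma \ref{lem:commutativitycomparison} made in Lemma \ref{lem:commuteProd}. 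Finally, the vanishing statements above should be proven cleanly before invoking the diagram, since the five-lemma conclusion only gives $\PD$ in the single surviving bidegree and one needs the vanishing to dispose of all other $(p,q)$.
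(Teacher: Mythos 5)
Your overall strategy is the same as the paper's: induction on $r$, the commutative diagram of Lemma \ref{lem:commuteProd} comparing the short exact sequence of Lemma \ref{lem:seqnoncompact} with the dual of the compact-support sequence of Proposition \ref{Prop:SeqComCoh}, and a five-lemma-type argument in the one surviving bidegree. Your side remarks are also on target: the paper does invoke Poincar\'e duality for $Y'\times\R=(V\times\R)\times\T^{r-1}$, which tacitly uses that $V\times\R$ is again matroidal, and the vanishing of $H^{p,q}(Y)$ for $q>0$ does come from Propositions \ref{prop:acyclic} and \ref{lem:Lp} exactly as you say.

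There is, however, one genuine gap, in the compact-support vanishing. Writing $Y'=V\times\T^{r-1}$ of dimension $n-1$ and $Y=Y'\times\T$ of dimension $n$, the relevant fragment of the sequence of Proposition \ref{Prop:SeqComCoh} is
\begin{align*}
H^{p,q}_c(Y'\times\R) \rightarrow H^{p,q}_c(Y) \rightarrow H^{p,q}_c(Y') \xrightarrow{\;f\;} H^{p,q+1}_c(Y'\times\R),
\end{align*}
and the induction hypothesis concentrates $H^{\bullet,\bullet}_c(Y'\times\R)$ in degree $n$ and $H^{\bullet,\bullet}_c(Y')$ in degree $n-1$. These two contributions do \emph{not} land in the same degree of $H^{\bullet,\bullet}_c(Y)$, so the sequence only yields $H^{p,q}_c(Y)=0$ for $q\notin\{n-1,n\}$, with $H^{p,n-1}_c(Y)=\ker f$; this is unlike the modification step in Lemma \ref{lem:vanishing}, where the two outer terms both contribute in degree $n$, so neither the long exact sequence alone nor an ``argument parallel to Lemma \ref{lem:vanishing}'' forces concentration in a single degree (and no K\"unneth formula is available in the paper). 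Killing $H^{p,n-1}_c(Y)$ requires showing that the connecting map $f$ is injective, which the paper obtains from the commutativity of the \emph{second} square of Lemma \ref{lem:commuteProd}: using $\PD$ for $Y'$ and $Y'\times\R$ (valid by induction), $f$ is identified, up to sign, with the dual of the contraction $\langle\,\cdot\,;e_i\rangle_p\colon H^{p,0}(Y'\times\R)\to H^{p-1,0}(Y')$, which is surjective by Lemma \ref{lem:seqnoncompact}. This vanishing is not optional: Poincar\'e duality in bidegree $(n-p,n-1)$ against $H^{p,1}(Y)=0$ demands it, and your five-lemma step in bidegree $(p,0)$ does not supply it. You flag at the end that the vanishing ``should be proven cleanly,'' but the mechanism you propose for it would fail; once the injectivity of $f$ is added, the rest of your argument goes through as in the paper.
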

\begin{proof}
We perform an induction on $r$. The base case $r = 0$ follows from Proposition \ref{Prop:PDforMatroidalRr}. For the induction step we have to show that if $Y$ has $\PD$ then $Y \times \T$ also has $\PD$. Since $Y$ is a basic open subset, $H^{p,q}(Y) = 0$ unless $q = 0$ by Proposition \ref{prop:acyclic}. Since $Y$ has $\PD$, we have  $H^{p,q}_c(Y) = 0$ unless $q = n = \dim(Y)$.  Note also that $Y \times \R = V \times \R \times \T^r$ and so this space has $\PD$. Therefore  $H^{p,q}_c(Y \times \R) = 0$ unless $q = n + 1 = \dim(Y \times \R)$.
Then the sequence from Proposition \ref{Prop:SeqComCoh} yields that $H^{p,q}_c(Y \times \T) = 0$ if $q \neq n, n+1$ and that the sequence 
\begin{align}\label{seq:longcompact}
0 \rightarrow H^{p,n}_c(Y \times \T) \rightarrow H^{p,n}_c(Y) \overset{f}{\rightarrow} H^{p,n+1}_c(Y \times \R) \rightarrow H^{p,n+1}_c(Y \times \T) \rightarrow 0 
\end{align}
is exact. By the commutativity of the second square of the diagram in Lemma  \ref{lem:commuteProd}, 
the map  $f$ in (\ref{seq:longcompact}) is dual to the map $\langle \ \cdot \ ;  e_i \rangle_{n-p+1}$. 
  This can be seen once we apply $\PD$ for $Y$ and $Y \times \R$ to 
  identify $H^{p,0}(Y \times \R) \cong H^{n-p+1,n+1}_c(Y \times \R)^*$ and $H^{p-1,0}(Y) \cong H^{n-p+1,n}_c(Y)^*$. Now $\langle  \ \cdot \ ;  e_i \rangle_{n-p+1}$ is  surjective by Lemma \ref{lem:seqnoncompact}, thus $f$ is injective and we have $H^{p,q}_c(Y \times \T) = 0$ unless $q = n+1$. 
  
Since $Y \times \T$ is a basic open subset, we also have $H^{p,q}(Y \times \T) = 0$ unless $q = 0$ by Proposition \ref{prop:acyclic}. Therefore, it remains to consider $\PD \colon H^{p,0}(Y \times \T) \rightarrow H^{n+1-p, n+1}_c(Y \times \T)$. Note that this is precisely the first vertical map in the diagram in Lemma \ref{lem:commuteProd}. The respective first horizontal maps are injective by Lemma \ref{lem:seqnoncompact} and the sequence (\ref{seq:longcompact}). Since the other vertical maps are isomorphisms, this shows that $Y\times \T$ has $\PD$. 
\end{proof}

The following technical lemma allows us to deduce Poincar\'e duality for basic open subsets
 of  matroidal fans.

\begin{lem} \label{lem:starfan}
Let $V \subset \R^s$ be a matroidal cycle,  $Y = V \times \T^r$, and 
$\Omega$ a basic open neighborhood of  $(0,\dots,0,-\infty,\dots,-\infty)$. Then there are canonical isomorphisms
$$H^{p,q}(Y) 
\cong H^{p,q}(\Omega) \qquad \text{and}  \qquad  
H^{p,q}_c(\Omega)  
\cong H^{p,q}_c(Y)$$
which are induced by restriction and inclusion of superforms. In particular $\Omega$ has $\PD$.
\end{lem}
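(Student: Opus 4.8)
The plan is to exploit the fact that $\Omega$ is a basic open neighborhood of the most sedentary point of $Y = V \times \T^r$, so that the minimal polyhedron of $\Omega$ is (a neighborhood of) the vertex $(0,\dots,0,-\infty,\dots,-\infty)$ itself. First I would treat the claim about ordinary cohomology. By Proposition \ref{prop:acyclic} both $H^{p,q}(Y)$ and $H^{p,q}(\Omega)$ vanish for $q>0$, since $Y$ and $\Omega$ are both basic open subsets. For $q=0$, Corollary \ref{kor:PLemmaX} identifies these groups with $\LS^p(Y)$ and $\LS^p(\Omega)$, and Proposition \ref{lem:Lp} computes each of these as $\bigl(\sum_{\tau:\sigma\prec\tau}\bigwedge^p\Linear(\tau)\bigr)^*$ where $\sigma$ is the minimal polyhedron; since the minimal polyhedron of $\Omega$ and the minimal polyhedron of $Y$ are both the vertex of maximal sedentarity, these multi-cotangent spaces coincide and the restriction map $\LS^p(Y)\to\LS^p(\Omega)$ is the stated isomorphism.

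The main work is the compact-support statement $H^{p,q}_c(\Omega)\cong H^{p,q}_c(Y)$. The natural approach is to reduce to the cohomology with compact support of the matroidal cycle $V\subset\R^s$ itself and its products with $\T$'s, using the machinery already built. Concretely, I would argue by induction on $r$, the number of $\T$-factors, exactly parallel to the proof of Proposition \ref{Prop:PDforMatroidaltimesT}. For the base case $r=0$ one has $Y=V\subset\R^s$ and a basic open $\Omega$; here I would use Lemma \ref{lem:vanishing} (or rather the contraction/retraction of $\Omega$ onto $V$ and the fact that a matroidal fan retracts onto its cone point) together with the long exact sequence of Proposition \ref{Prop:SeqComCoh} — but more efficiently, since $\Omega$ deformation retracts onto $Y$ in a polyhedral-structure-preserving way (Lemma \ref{lem:retractionSed} combined with a linear contraction toward the cone point), and the inclusion $\Omega_c\hookrightarrow Y$ is proper, the induced maps on $H^{p,\bullet}_c$ are isomorphisms by the same homotopy argument used in Proposition \ref{prop:acyclic} dualized for compact supports. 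For the inductive step, writing $\Omega = \Omega' $ inside $Y' \times \T$ with $\Omega'$ basic open in $Y' = V\times\T^{r-1}$, I would feed the pair $(U, D) = (\Omega\setminus D, D)$ with $D = \Omega\cap\T^r_{i}$ into Proposition \ref{Prop:SeqComCoh}, and compare with the analogous sequence for $Y'\times\T$ via the restriction/inclusion maps, invoking the induction hypothesis for $D$ (a basic open in $V\times\T^{r-1}$) and for $U$ (a basic open in $V\times\T^{r-1}\times\R$) and applying the five lemma.

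The last sentence, "$\Omega$ has $\PD$", then follows because the isomorphisms $H^{p,q}(Y)\cong H^{p,q}(\Omega)$ and $H^{p,q}_c(\Omega)\cong H^{p,q}_c(Y)$ are induced by restriction and inclusion of superforms, hence are compatible with the integration pairing defining $\PD$ (the relevant compatibility is the same bookkeeping as in Proposition \ref{prop:closedmodification}: restriction of $\alpha$ wedged with the extension-by-zero of $\beta$ integrates to the same thing over $\Omega$ as over $Y$, since the support of $\beta$ is compact inside $\Omega$). Combined with Proposition \ref{Prop:PDforMatroidaltimesT}, which gives $\PD$ for $Y=V\times\T^r$, the commuting square forces $\PD$ for $\Omega$. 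The step I expect to be the main obstacle is verifying carefully that the deformation retraction of $\Omega$ onto $Y$ (or the analysis of the restriction maps in the compact-support long exact sequences) genuinely preserves the weighted polyhedral structure and is proper, so that Proposition \ref{Prop:SeqComCoh} applies verbatim and the five-lemma comparison goes through; the cohomological algebra is then routine.
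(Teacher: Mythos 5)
Your treatment of the ordinary cohomology groups matches the paper: both sides vanish for $q>0$ by Proposition \ref{prop:acyclic}, and for $q=0$ Proposition \ref{lem:Lp} identifies $\LS^p(Y)$ and $\LS^p(\Omega)$ via the common minimal polyhedron. The compact-support half, however, has a genuine gap in the base case, and that is where all the content lies. You propose to get $H^{p,q}_c(\Omega)\cong H^{p,q}_c(Y)$ from a deformation retraction, "dualizing for compact supports" the prism-operator argument of Proposition \ref{prop:acyclic}. Compactly supported cohomology is not a homotopy invariant ($H^n_c(\R^n)=\R$ while $H^n_c(\mathrm{pt})=0$), and the obstruction is exactly that the radial contraction $f(x,t)=(1-t)x$ is not proper at $t=1$, so it induces no map on compactly supported cochains and no chain homotopy there. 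Your five-lemma induction on $r$ only pushes the problem down to $r=0$, where the long exact sequence of Proposition \ref{Prop:SeqComCoh} gives nothing and the retraction argument is all that remains; so the proof does not close. (There is also a direction/functoriality confusion: the open inclusion $\Omega\hookrightarrow Y$ is not proper, and the relevant map on compact supports is extension by zero, which is covariant, not a pullback.)

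What the paper does instead, and what is missing from your proposal, is twofold. First, it observes that $\Omega$ is actually \emph{homeomorphic} to $Y$ by a map respecting the strata and the fan polyhedral structure; a homeomorphism (unlike a homotopy equivalence) does induce an isomorphism on tropical cohomology with compact support, which gives the abstract isomorphism and in particular finite-dimensionality via $\PD$ for $Y$. Second, to show that the \emph{specific} map induced by inclusion of superforms is an isomorphism, it suffices by dimension count to prove surjectivity in the only nontrivial degree $q=n$; this is done by choosing a basis $\alpha_1,\dots,\alpha_k$ of $H^{p,0}(Y)$ and a dual basis $\omega_1,\dots,\omega_k\in H^{n-p,n}_c(Y)$ under the integration pairing, then applying a dilation $F(w)=C\cdot w+v$ that squeezes the supports of the $\omega_j$ into $\Omega$; the pullbacks $\beta_j=F^*(\omega_j)$ lie in $\AS^{n-p,n}_c(\Omega)$ and still pair nondegenerately with the $\alpha_i$ by the transformation formula, because the $\alpha_i$ are determined by their values at the cone point and rescale by $C^p$. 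This scaling trick, exploiting the conical structure of $Y$ together with $\PD$ for $Y$ from Proposition \ref{Prop:PDforMatroidaltimesT}, is the key idea your argument lacks; your final paragraph on compatibility of $\PD$ with the maps is fine in spirit but presupposes that the inclusion-of-superforms map is already known to be an isomorphism.
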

\begin{proof}
For $H^{p,q}$ the statement follows immediately from the explicit calculation done in Proposition \ref{lem:Lp}. For cohomology with compact support,  we first see that there is a homeomorphism between $\Omega$ and $Y$ which  respects strata of $\T^r$ and the polyhedra for any fan polyhedral structure on $Y$.
This homeomorphism  induces an isomorphism of tropical cohomology with compact support and therefore  $H^{p,q}_c(\Omega) \cong H^{p,q}_c(Y)$ by Theorem \ref{thm:equivalenceDeRhamPQ}. By $\PD$ for $Y$ these cohomology groups with compact support are finite dimensional, thus it is sufficient to show that inclusion of superforms induces a surjective map on cohomology. Again by $\PD$ for $Y$ this is trivial if $q \neq n = \dim Y$. 

For $q = n$,  choose a basis $\alpha_1,\dots,\alpha_k$ of $H^{p,0}(Y)$. By $\PD$ for $Y$ there exist $\omega_1,\dots,\omega_k \in H^{n-p,n}_c(Y)$ such that $\int_Y \alpha_i \vedge \omega_j = \delta_{ij}$. For surjectivity of $H^{n-p,n}_c(\Omega) \rightarrow H^{n-p,n}_c(Y)$ it is sufficient to show that there exist $\beta_1,\dots,\beta_k \in H^{n-p,n}_c(\Omega)$ such that $\int_Y \alpha_i \vedge \beta_j \neq 0$ if and only if $i = j$. Let $B$ be the union of the supports of all $\omega_i$. Take  $C \in \R_{>0}$ and $v \in \R^{r}$ such that $B \subset C \cdot \Omega + v$. Define $F$ to be the extended affine map given by $w \mapsto C \cdot w + v$ and set $\beta_i := F^*(\omega_i) \in \AS^{p,q}_c(\Omega)$ for all $i$. Since $\alpha_i \in \LS^p(Y)$ we have $F^*(\alpha_i) = C^p \alpha_i$ and 
\begin{align*}
\int\limits_Y \alpha_i \vedge \beta_j = \int \limits_Y \alpha_i \vedge F^*(\omega_j) = C^{-p} \int \limits_Y F^*(\alpha_i \vedge \omega_j) = C^{n-p}\delta_{ij},
\end{align*}
where the last equality is given by  the transformation formula \cite[2.4]{Gubler}. This proves surjectivity and thus the lemma. 
\end{proof}

\begin{lem}\label{lem:matroidalbasicopen}
Let $V$ be a matroidal cycle in $\R^s$,  $Y = V \times \T^r$, and 
$\Omega$ a basic open subset for some polyhedral structure on $Y$. Then $\Omega$ has $\PD$. 
\end{lem}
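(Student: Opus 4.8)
The plan is to reduce this to Lemma~\ref{lem:starfan}. Write $Y = V \times \T^r$ with $V \subset \R^s$, let $\sigma$ be the unique minimal polyhedron of $\Omega$, and set $I = \sed(\sigma)$, $m = |I|$. I aim to show that $\Omega$ is isomorphic, as a weighted polyhedral space and via an integral extended affine map, to a basic open neighbourhood of the apex point $(0,\dots,0,-\infty,\dots,-\infty)$ of $V' \times \T^{m}$ for a suitable matroidal cycle $V'$. Granting this, Lemma~\ref{lem:starfan} (applied with $V'$ and $m$ in place of $V$ and $r$) shows that neighbourhood has $\PD$, and transporting the Poincar\'e duality map along the isomorphism then yields $\PD$ for $\Omega$.

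First I would decompose $\Omega$ as a product. The ambient space of $Y$ is $\R^s \times \T^r$, and since the sedentarity of $\sigma$ is the maximal one occurring in $\Omega$, the cube $\Delta$ with $\Omega = \Delta \cap Y$ can be chosen so that it has an interval $[-\infty,c_j)$ in each coordinate $j \in I$ while $\Omega$ meets none of the deeper strata. Thus $\Omega = \widetilde{\Omega}_I \times \prod_{j \in I}[-\infty,c_j)$, where $\widetilde{\Omega}_I \subset Y_I := V \times \R^r_I$. The stratum $Y_I \cong V \times \R^{r-m}$ is again a matroidal cycle, being a product of a matroidal cycle with a vector space, and $\widetilde{\Omega}_I$ is a basic open subset of $Y_I$ with minimal polyhedron $\sigma$; indeed the faces of $\CS_I$ meeting $\widetilde{\Omega}_I$ are exactly the sedentarity-$I$ faces of $\CS$ meeting $\Omega$, of which $\sigma$ is the smallest.

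Next I would put $\widetilde{\Omega}_I$ into standard form. Fix $x_0 \in \inte(\sigma)$. Since $Y_I$ is the support of a Bergman fan $\mathcal B$ of a matroid and the relative interiors of the cones of $\mathcal B$ partition $|Y_I|$, the connected set $\inte(\sigma)$ lies in the relative interior of a single cone $c$ of $\mathcal B$; hence $\Linear(\sigma)$ is contained in the linear span of the flat directions of $Y_I$ at $x_0$, and the star fan $\str_{\sigma}(Y_I)$ is the product of a vector space with the Bergman fan of a matroid minor of the matroid defining $Y_I$. In particular $\str_{\sigma}(Y_I)$, and hence $V' := \Linear(\sigma) \times \str_{\sigma}(Y_I)$, is matroidal. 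By the local structure of $Y_I$ along $\inte(\sigma)$, a basic open subset with minimal polyhedron $\sigma$ is isomorphic, via an integral affine map carrying $x_0$ to the apex, to a basic open neighbourhood of the apex of $V'$; combined with the product decomposition this gives an isomorphism of $\Omega$ with a basic open neighbourhood of the apex of $V' \times \T^{m}$.

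Finally, the transfer of $\PD$ is formal. The isomorphism constructed above is an integral extended affine map, so by Lemma~\ref{chap4:lem:pullbackII} its pullback on superforms commutes with $d''$ and with $\vedge$; since $Y$ and $V'$ carry weight one everywhere together with the standard lattices in each stratum, the transformation formula \cite[2.4]{Gubler} and Definition~\ref{def:integration} show that the pullback also preserves integrals. Therefore it intertwines the Poincar\'e duality maps of Definition~\ref{def:PoincareMap}, and since Lemma~\ref{lem:starfan} gives $\PD$ for the target, $\Omega$ has $\PD$. The hard part is the local normal form of the third paragraph: one must identify the transverse structure of $\Omega$ along its minimal polyhedron with a matroidal cycle, which relies on the fact that stars of cones of Bergman fans are Bergman fans of matroid minors \cite{ArdilaKlivans, Shaw:Thesis} — so that matroidality descends to the local models — and on keeping careful track of which sedentary $\T$-directions survive according to $\sed(\sigma)$. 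Once the normal form is in place, everything else follows from Lemma~\ref{lem:starfan} and the functoriality of pullbacks.
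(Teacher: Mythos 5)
Your proof is correct and follows essentially the same route as the paper: both split off the maximal sedentarity $I$, use the Ardila--Klivans fact that stars in matroidal fans are again matroidal to identify $\Omega$ (after translating the minimal face to the origin) with a basic open neighbourhood of the apex of $|\operatorname{Star}_x(\Sigma)|\times\T^{|I|}$, and then invoke Lemma~\ref{lem:starfan}. The only difference is presentational: you transport $\PD$ along an explicit integral affine isomorphism, whereas the paper simply observes that $\Omega$ already \emph{is} such a neighbourhood.
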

\begin{proof}
If $I \subset [r]$ is the maximal sedentarity among points of $\Omega$,
then  
$\Omega$ is a basic open subset of a polyhedral structure on  $ V \times \TT^{|I|} \times \R^{r-|I|}$ of maximal sedentarity. 
Let $x$ be a point in the relative interior of the minimal face of the basic open set $\Omega$. 
The star of any point in a matroidal fan is again a matroidal fan, see \cite[Proposition 2]{ArdilaKlivans}. 
Applying this fact to $V \times \R^{r- |I|}$ we obtain that, after translation of $x$ to the origin, the basic open set $\Omega$ is a  neighborhood of $(0,\dots,0,-\infty,\dots,-\infty)$ in the matroidal cycle $|\text{Star}_x(\Sigma)| \times \T^{|I|}$ for a matroidal fan $\Sigma$ with support $V$.
Therefore, we can apply Lemma \ref{lem:starfan}, and it follows that $\Omega$ has PD. 
\end{proof}

Finally, we obtain Poincar\'e duality for tropical manifolds:

\begin{Thm}\label{Thm:PoincareDualityII}
Let $X$ be an $n$-dimensional tropical manifold. Then 
the Poincar\'e duality map 
is an isomorphism for all $p, q$.
\end{Thm}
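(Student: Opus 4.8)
The plan is to deduce the global statement from the local Poincaré duality established in Propositions \ref{Prop:PDforMatroidalRr} and \ref{Prop:PDforMatroidaltimesT} (together with Lemma \ref{lem:matroidalbasicopen}, which handles arbitrary basic open subsets of the local models) by the standard sheaf-theoretic Mayer--Vietoris patching argument, exactly as in the proof of Poincaré duality for smooth manifolds. Throughout, $X$ is an $n$-dimensional tropical manifold, so every point has a neighborhood isomorphic to a basic open subset of some $V_i\times\T^{r_i}$ with $V_i$ matroidal, and by Lemma \ref{lem:matroidalbasicopen} all such basic opens have $\PD$. The Poincaré duality map is a map of (co)homological $\delta$-functors: for an open $U\subset X$ the wedge-and-integrate pairing together with Stokes' theorem (Theorem \ref{thm:StokesTropical}, Remark \ref{bem:defPD}) gives $\PD\colon H^{p,q}(U)\to H^{n-p,n-q}_c(U)^*$, and one checks this is compatible with the Mayer--Vietoris sequences for $H^{p,\bullet}$ and (the dual of) the Mayer--Vietoris sequence for $H^{n-p,n-\bullet}_c$ on a union $U\cup U'$. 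The Mayer--Vietoris sequence for compactly supported cohomology comes from the short exact sequence of complexes $0\to\AS^{p,\bullet}_{c}(U\cap U')\to\AS^{p,\bullet}_{c}(U)\oplus\AS^{p,\bullet}_{c}(U')\to\AS^{p,\bullet}_{c}(U\cup U')\to0$, which is exact because the sheaves $\AS^{p,q}$ are soft (Proposition \ref{rem:complexonX}); dualizing gives a long exact sequence ending up compatible with the ordinary Mayer--Vietoris ladder via the sign conventions built into $\PD$ in Definition \ref{def:PoincareMap}.

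First I would record a general patching lemma: if $\mathcal{B}$ is a basis of open sets of $X$, closed under finite intersection, consisting of sets that have $\PD$, and if $\PD$ is compatible with Mayer--Vietoris, then every open set that is a finite union of members of $\mathcal{B}$ has $\PD$ (by induction on the number of pieces and the five lemma), and then every open set has $\PD$ by taking an increasing union and passing to the limit --- here one uses that $H^{p,q}(\varinjlim U_k)=\varprojlim$-type arguments or, more cleanly, that both $H^{p,\bullet}$ and $H^{n-p,n-\bullet}_c$ commute appropriately with the relevant (co)limits: $H^{p,q}_c$ commutes with direct limits of open inclusions, and for $H^{p,q}$ one uses that $X$ is second countable and the $\varprojlim^1$ terms vanish because, by the already-handled finite-union case, the groups stabilize in each degree or are finite-dimensional. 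To make the second-countability reduction work I would first treat the case where $X$ has a countable basis $\{U_k\}$ of $\PD$-opens with all finite intersections again in the family (possible because basic opens of the local charts have this property and finite intersections of basic opens, while not basic, can themselves be covered by basic opens that still have $\PD$ by Lemma \ref{lem:matroidalbasicopen}); the standard trick is to first prove $\PD$ for disjoint unions of basic opens, then for finite unions, then for countable unions, exhausting $X$.

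The key steps, in order: (1) construct the compactly-supported Mayer--Vietoris long exact sequence from softness of $\AS^{p,\bullet}$ and verify the $\PD$ ladder commutes, which amounts to a sign check using the Leibniz rule and Stokes' theorem as in Remark \ref{bem:defPD}; (2) the patching lemma for finite unions via the five lemma, taking as input that basic opens of the local models have $\PD$ (Lemma \ref{lem:matroidalbasicopen}) and that any finite intersection of such can be covered by finitely many such; (3) the colimit step for countable exhaustions, using exactness of filtered colimits for $H^{p,q}_c$ and a $\varprojlim^1$-vanishing or finite-dimensionality argument for $H^{p,q}$; (4) conclude for arbitrary $X$, since any tropical manifold is, by paracompactness and second countability, a countable union of basic opens in local charts. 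I expect the main obstacle to be step (1): carefully setting up the compactly-supported Mayer--Vietoris sequence with the correct signs so that it meshes with the ordinary Mayer--Vietoris sequence under $\PD$ --- in particular checking that the connecting homomorphism of the compactly-supported sequence is, up to the sign $\varepsilon$ in Definition \ref{def:PoincareMap}, dual to the connecting homomorphism of the ordinary sequence. The colimit/$\varprojlim^1$ bookkeeping in step (3) is the secondary technical point, but it is entirely standard and can be cited from the classical treatment (e.g.\ the proof of Poincaré duality in Bott--Tu), adapted verbatim since all the needed formal properties --- softness, the complexes of superforms computing both cohomologies, and finite-dimensionality of the cohomology of the local models --- are already in place.
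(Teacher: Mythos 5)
Your route is the classical Mayer--Vietoris patching argument, and it is genuinely different from what the paper does. The paper's proof never invokes Mayer--Vietoris or colimits: it promotes the duality to the level of sheaves by introducing the presheaf $U \mapsto \Hom_\R(\AS^{n-p,q}_c(U),\R)$, observes that this is a \emph{flasque} sheaf (restriction is dual to the injective extension-by-zero maps), and notes that $\PD$ gives a morphism from the resolution $0 \to \LS^p \to \AS^{p,\bullet}$ to the complex $0 \to \LS^p \to {\AS^{n-p,n-\bullet}_c}^*$. Exactness of the second complex is a statement about sheaves, so it only needs to be checked on a basis, where Lemma \ref{lem:matroidalbasicopen} applies; one then has two acyclic resolutions of $\LS^p$ and the global isomorphism is formal. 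This buys exactly what your approach has to work for: no closure-under-intersection hypotheses, no exhaustions, no $\varprojlim^1$ bookkeeping.

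That matters because your write-up has a genuine gap at the point where the classical argument needs a basis \emph{closed under finite intersections}. Basic open subsets are not closed under intersection: the intersection of two open cubes with the support of a complex can meet several polyhedra with no common minimal face, and need not even be a disjoint union of basic opens. You acknowledge this and propose to cover $U\cap U'$ by basic opens that have $\PD$, but knowing that a set is covered by $\PD$-opens does not give $\PD$ for the set --- that is precisely what the induction is trying to prove, so the fix as stated is circular. The standard repair (as in Hatcher's proof of Poincar\'e duality for singular homology) is a two-stage induction: first prove $\PD$ for \emph{arbitrary} open subsets of a single local model $V\times\T^r$, where one can choose a basis of "convex" opens genuinely closed under intersection, and only then glue charts; you would need to carry out that reduction, and verify that such a basis of the local models exists and consists of $\PD$-opens (the cubes cutting out basic opens are closed under intersection, but the unique-minimal-face condition is what fails). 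The remaining points you flag --- the sign compatibility of the two Mayer--Vietoris ladders and the $\varprojlim^1$ vanishing via finite-dimensionality of the cohomology of finite unions --- are real work but standard and, I agree, would go through once the intersection issue is resolved.
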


\begin{proof}
We write ${\AS^{p,q}_c}^*$ for the sheaf $U \mapsto \Hom_\R(\AS^{p,q}_c(U), \R)$. Then ${\AS^{p, q}_c}^*$ is a sheaf, since $\AS^{p,q}$ is fine. Furthermore ${\AS^{p,q}_c}^*$ is a flasque sheaf, since for $U' \subset U$ the inclusion $A^{p,q}_c(U') \rightarrow A^{p,q}_c(U)$ is injective. We then obtain the commutative diagram
\begin{align*}
\begin{xy}
\xymatrix
{
0 \ar[r] & \LS^p \ar[r] \ar[d]^{\id} & \AS^{p,0} \ar[r]^{d''} \ar[d]^{\PD} & \AS^{p,1}\ar[r] \ar[d]^{\PD} &\dots \\
0 \ar[r] &\LS^p \ar[r]& {\AS^{n-p,n}_{c}}^* \ar[r]^{{d''}^*}& {\AS^{n-p,n-1}_{c}}^* \ar[r]& \dots
}
\end{xy}
\end{align*}
and we have 
\begin{align*}
H^q({\AS^{n-p,n-\bullet}_c}^*(U), {d''}^*) = (H_q(\AS^{n-p,n-\bullet}_c(U), d''))^* = H^{p,q}_c(U)^*.
\end{align*}
If we consider the sections of this diagram over a basic open subset, then the first row is exact by Proposition \ref{prop:acyclic}. By Lemma \ref{lem:matroidalbasicopen} the second row is  also exact. This shows that both rows  are exact sequences of sheaves on $X$. Thus we have a commutative diagram of acyclic resolutions of $\LS^p$, thus $\PD$ induces isomorphisms on the cohomology of the complexes of global sections. This precisely means that $X$ has $\PD$. 
\end{proof}

When $X$ is a compact tropical manifold,
 the above theorem immediately implies the following. 
\begin{kor} \label{kor:PD}
Let $X$ be a compact tropical manifold of dimension $n$. Then
$$\PD \colon H^{p,q}(X)  \to H^{n-p,n-q}(X)^*$$
is an isomorphism for all $p,q$.
\end{kor}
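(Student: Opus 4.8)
The plan is to obtain this statement as an immediate consequence of Theorem \ref{Thm:PoincareDualityII} together with the standard fact that compactly supported cohomology agrees with ordinary cohomology on a compact space. Concretely, the first step is to observe that if $X$ is compact then every section of $\AS^{p,q}_X$ over $X$ has compact support, so that $\AS^{p,q}_{X,c}(X) = \AS^{p,q}_X(X)$ for all $p,q$. This identification is compatible with the differential, since $d''$ is defined in the same way on both complexes, and hence it induces canonical isomorphisms
\[
H^{p,q}_c(X) = H^{p,q}_{d'',c}(X) = H^{p,q}_{d''}(X) = H^{p,q}(X)
\]
for all $p$ and $q$, where the equalities with $H^{p,q}_{d''}$ and $H^{p,q}$ are the ones recalled at the start of Subsection \ref{subsec:PD}.

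The second step is to apply Theorem \ref{Thm:PoincareDualityII}, which asserts that for the tropical manifold $X$ the map $\PD \colon H^{p,q}(X) \to H^{n-p,n-q}_c(X)^*$ from Definition \ref{def:PoincareMap} is an isomorphism for all $p,q$. Substituting the identification $H^{n-p,n-q}_c(X) = H^{n-p,n-q}(X)$ from the first step into the target, the map $\PD$ becomes the claimed map $H^{p,q}(X) \to H^{n-p,n-q}(X)^*$, and it is an isomorphism by that theorem. The only point worth making explicit is that under the compactness identification the integration pairing $\AS^{p,q}(X) \times \AS^{n-p,n-q}_c(X) \to \R$ defining $\PD$ is literally the pairing $\AS^{p,q}(X) \times \AS^{n-p,n-q}(X) \to \R$, so the Poincar\'e duality map of the corollary is the Poincar\'e duality map of the theorem; there is no genuine obstacle, the statement being a direct corollary as indicated.
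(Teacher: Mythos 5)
Your proposal is correct and matches the paper, which derives the corollary immediately from Theorem \ref{Thm:PoincareDualityII} via the observation that on a compact space every superform has compact support, so $H^{n-p,n-q}_c(X)=H^{n-p,n-q}(X)$. Spelling out that this identification is compatible with $d''$ and with the integration pairing is exactly the (implicit) content of the paper's one-line deduction.
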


\bibliographystyle{alpha}

\begin{thebibliography}{IKMZ16}

\bibitem[AB15]{AdiprasitoBjorner}
Karim~Alexander Adiprasito and Anders Bj\"orner.
\newblock Filtered geometric lattices and lefschetz section theorems over the
  tropical semiring.
\newblock 2015.
\newblock \url{http://arxiv.org/abs/1401.7301}.

\bibitem[AHK15]{AdiprasitoHuhKatz}
Karim~Alexander Adiprasito, June Huh, and Eric Katz.
\newblock Hodge theory for combinatorial geometries.
\newblock 2015.
\newblock \url{http://arxiv.org/abs/1511.02888}.

\bibitem[AK06]{ArdilaKlivans}
Federico Ardila and Caroline Klivans.
\newblock The {B}ergman complex of a matroid and phylogenetic trees.
\newblock {\em J. Comb. Theory Ser. B}, 96(1):38--49, 2006.

\bibitem[BH15]{BabaeeHuh}
Farhad Babaee and June Huh.
\newblock A tropical approach to the strongly positive {H}odge conjecture.
\newblock {\em Duke. Math. J.}, 166(14): 2749--2813, 2017.

\bibitem[BIMS15]{BIMS15}
Erwan Brugall\'e, Ilia Itenberg, Grigory Mikhalkin, and Kristin Shaw.
\newblock Brief introduction to tropical geometry.
\newblock In {\em G\"okova Geometry-Topology conference}, 2015.

\bibitem[Bre97]{Bredon}
Glen~E. Bredon.
\newblock {\em Sheaf theory}, volume 170 of {\em Graduate Texts in
  Mathematics}.
\newblock Springer-Verlag, New York, second edition, 1997.

\bibitem[CLD12]{CLD}
Antoine Chambert-Loir and Antoine Ducros.
\newblock Formes diff\'{e}rentielles r\'{e}elles et courants sur les espaces de
  {B}erkovich.
\newblock 2012.
\newblock \url{http://arxiv.org/abs/1204.6277}.

\bibitem[EH00]{EisenbudHarris}
David Eisenbud and Joe Harris.
\newblock {\em The geometry of schemes}, volume 197 of {\em Graduate Texts in
  Mathematics}.
\newblock Springer-Verlag, New York, 2000.

\bibitem[FS05]{FeichtnerSturmfels}
Eva~Maria Feichtner and Bernd Sturmfels.
\newblock Matroid polytopes, nested sets and {B}ergman fans.
\newblock {\em Port. Math. (N.S.)}, 62(4):437--468, 2005.

\bibitem[GK14]{GublerKuennemann}
Walter Gubler and Klaus K\"unnemann.
\newblock A tropical approach to non-archimedean {A}rakelov theory.
\newblock 2014.
\newblock \url{http://arxiv.org/abs/1406.7637}.

\bibitem[Gub13]{Gubler2}
Walter Gubler.
\newblock A guide to tropicalizations.
\newblock In {\em Algebraic and combinatorial aspects of tropical geometry},
  volume 589 of {\em Contemp. Math.}, pages 125--189. Amer. Math. Soc.,
  Providence, RI, 2013.

\bibitem[Gub16]{Gubler}
Walter Gubler.
\newblock Forms and currents on the analytification of an algebraic variety
  (after {C}hambert-{L}oir and {D}ucros).
\newblock In Matthew Baker and Sam Payne, editors, {\em Nonarchimedean and
  Tropical Geometry}, Simons Symposia, pages 1--30, Switzerland, 2016.
  Springer.

\bibitem[IKMZ16]{IKMZ}
Ilia Itenberg, Ludmil Khazarkov, Grigory Mikhalkin, and Ilia Zharkov.
\newblock Tropical homology.
\newblock 2016.
\newblock \url{https://arxiv.org/abs/1604.01838}.

\bibitem[Ive86]{Iversen}
Birger Iversen.
\newblock {\em Cohomology of sheaves}.
\newblock Universitext. Springer-Verlag, Berlin, 1986.

\bibitem[Jel16a]{JellThesis}
Philipp Jell.
\newblock Differential forms on {B}erkovich analytic spaces and their
  cohomology.
\newblock PhD Thesis, availible at
  \url{http://epub.uni-regensburg.de/34788/1/ThesisJell.pdf}, 2016.

\bibitem[Jel16b]{Jell}
Philipp Jell.
\newblock A {P}oincar\'e lemma for real-valued differential forms on
  {B}erkovich spaces.
\newblock {\em Math. Z.}, 282(3-4):1149--1167, 2016.

\bibitem[JRS17]{JRS}
Philipp Jell, Johannes Rau, and Kristin Shaw.
\newblock {L}efschetz (1,1)-theorem in tropical geometry.
\newblock 2017.
\newblock \url{https://arxiv.org/abs/1711.07900}.

\bibitem[Lag12]{Lagerberg}
Aron Lagerberg.
\newblock Super currents and tropical geometry.
\newblock {\em Math. Z.}, 270(3-4):1011--1050, 2012.

\bibitem[Mik06]{Mik:Applications}
Grigory Mikhalkin.
\newblock Tropical geometry and its applications.
\newblock In {\em International Congress of Mathematicians. Vol. II}, pages
  827--852. Eur. Math. Soc., Z\"urich, 2006.

\bibitem[MR]{MikRau}
Grigory Mikhalkin and Johannes Rau.
\newblock Tropical geometry.
\newblock Draft of a book available at:
  https://www.dropbox.com/s/9lpv86oz5f4za75/main.pdf.

\bibitem[MZ14]{MikZhar}
Grigory Mikhalkin and Ilia Zharkov.
\newblock Tropical eigenwave and intermediate {J}acobians.
\newblock In {\em Homological mirror symmetry and tropical geometry}, volume~15
  of {\em Lect. Notes Unione Mat. Ital.}, pages 309--349. Springer, Cham, 2014.

\bibitem[OT92]{OrlikTerao}
Peter Orlik and Hiroaki Terao.
\newblock {\em Arrangements of hyperplanes}.
\newblock Springer Verlag, 1992.

\bibitem[Oxl11]{Oxley}
James Oxley.
\newblock {\em Matroid theory}, volume~21 of {\em Oxford Graduate Texts in
  Mathematics}.
\newblock Oxford University Press, Oxford, second edition, 2011.

\bibitem[Ram05]{Ramanan}
Sundararaman Ramanan.
\newblock {\em Global calculus}, volume~65 of {\em Graduate Studies in
  Mathematics}.
\newblock American Mathematical Society, Providence, RI, 2005.

\bibitem[Sha11]{Shaw:Thesis}
Kristin Shaw.
\newblock Tropical intersection theory and surfaces, 2011.
\newblock PhD Thesis, available at https://archive-ouverte.unige.ch/unige:22758.

\bibitem[Sha13a]{Shaw:11homol}
Kristin Shaw.
\newblock Tropical $(1, 1)$-homology for floor decomposed surfaces.
\newblock In E.~Brugall\'e, M.~A. Cueto, A.~Dickenstein, E.M. Feichtner, and
  I.~Itenberg, editors, {\em Algebraic and Combinatorial Aspects of Tropical
  Geometry}, volume 589, pages 529--550, Providence, RI, 2013. American
  Mathematical Society.

\bibitem[Sha13b]{Shaw:IntMat}
Kristin Shaw.
\newblock A tropical intersection product in matroidal fans.
\newblock {\em SIAM J. Discrete Math.}, 27(1):459--491, 2013.

\bibitem[Sha15]{Shaw:Surfaces}
Kristin Shaw.
\newblock Tropical surfaces.
\newblock 2015.
\newblock \url{http://arxiv.org/abs/1506.07407}.

\bibitem[Stu02]{Sturmfels:Poly}
Bernd Sturmfels.
\newblock {\em Solving systems of polynomial equations}, volume~97 of {\em CBMS
  Regional Conference Series in Mathematics}.
\newblock American Mathematical Society, Providence, RI., 2002.

\bibitem[War83]{Warner}
Frank~W. Warner.
\newblock {\em Foundations of differentiable manifolds and {L}ie groups},
  volume~94 of {\em Graduate Texts in Mathematics}.
\newblock Springer-Verlag, New York-Berlin, 1983.
\newblock Corrected reprint of the 1971 edition.

\bibitem[Wel80]{Wells}
Raymond~O. Wells, Jr.
\newblock {\em Differential analysis on complex manifolds}, volume~65 of {\em
  Graduate Texts in Mathematics}.
\newblock Springer-Verlag, New York-Berlin, second edition, 1980.

\bibitem[Zha13]{Zharkov:Orlik}
Ilia Zharkov.
\newblock The {O}rlik-{S}olomon algebra and the {B}ergman fan of a matroid.
\newblock {\em J. G\"okova Geom. Topol. GGT}, 7:25--31, 2013.

\end{thebibliography}

\def\cprime{$'$}

\end{document}